\documentclass[11pt]{amsart}
\usepackage[usenames]{color}
\usepackage{fullpage}
\usepackage{amscd}
\usepackage{amssymb, latexsym}
\usepackage[all,2cell,ps]{xy}
\usepackage{mathdots}

\theoremstyle{plain}
\newtheorem{thm}{Theorem}[section]
\newtheorem{theorem}[thm]{Theorem}

\newtheorem{corollary}[thm]{Corollary}

\newtheorem{lemma}[thm]{Lemma}

\newtheorem{prop}[thm]{Proposition}
\newtheorem{proposition}[thm]{Proposition}
\newtheorem{ques}[thm]{Question}

\theoremstyle{definition}
\newtheorem{de}[thm]{Definition}

\newtheorem{rem}[thm]{Remark}

\newtheorem{example}[thm]{Example}

\newcommand{\id}{\mathrm{id}}

\newcommand{\Mlt}{\mathop{\mathcal{G}}}
\newcommand{\Aut}{\mathop{\mathrm{Aut}}}

\numberwithin{equation}{section}

\begin{document}

\title{Non-involutive solutions of the Yang-Baxter equation\\ of multipermutation level 2}

\author{Jan Hora}
\author{P\v remysl Jedli\v cka}
\author{Agata Pilitowska}

\address{(J.H., P.J.) Department of Mathematics and Physics, Faculty of Engineering, Czech University of Life Sciences Prague, Kam\'yck\'a 129, 16521 Praha 6, Czech Republic}
\address{(A.P.) Faculty of Mathematics and Information Science, Warsaw University of Technology, Koszykowa 75, 00-662 Warsaw, Poland}

\email{(J.H.) horaj@tf.czu.cz}
\email{(P.J.) jedlickap@tf.czu.cz}
\email{(A.P.) agata.pilitowska@pw.edu.pl}

\keywords{
Yang-Baxter equation, set-theoretic solution, multipermutation solution of level 2, isotope of a solution.
}

\subjclass[2020]{Primary: 16T25. 
Secondary: 
20N05, 20B25.}

\date{\today}

\begin{abstract}
We study non-degenerate set-theoretic solutions of the Yang-Baxter equation of multipermutation level $2$ which are not $2$-reductive. We describe an effective way of constructing such solutions using square-free $2$-reductive solutions and two bijections. We present an algorithm how to obtain all such finite solutions, up to isomorphism. Using this algorithm, we enumerate all solutions of multipermutation level $2$ up to size $6$.

\end{abstract}

\maketitle

\section{Introduction}
The Yang-Baxter equation is a fundamental equation occurring in mathematical physics. It appears, for example, in integrable models in statistical mechanics, quantum field theory or Hopf algebras~(see e.g. \cite{Jimbo, K}). Searching for its solutions has been absorbing researchers for many years.

Let us recall that, for a vector space $V$, a {\em solution of the Yang--Baxter equation} is a linear mapping $r:V\otimes V\to V\otimes V$ 
 such that
\begin{align*}
(id\otimes r) (r\otimes id) (id\otimes r)=(r\otimes id) (id\otimes r) (r\otimes id).
\end{align*}

Description of all possible solutions seems to be extremely difficult and therefore
there were some simplifications introduced by Drinfeld in \cite{Dr90}.
Let  $X$ be a basis of the space $V$ and let $\sigma:X^2\to X$ and $\tau: X^2\to X$ be two mappings. We say that $(X,\sigma,\tau)$ is a {\em set-theoretic solution of the Yang--Baxter equation} if
the mapping 
$$x\otimes y \mapsto \sigma(x,y)\otimes \tau(x,y)$$ extends to a solution of the Yang--Baxter
equation. It means that $r\colon X^2\to X^2$, where $r=(\sigma,\tau)$,  
satisfies the \emph{braid relation}:
\begin{equation}\label{eq:braid}
(id\times r)(r\times id)(id\times r)=(r\times id)(id\times r)(r\times id).
\end{equation}

A solution is called {\em non-degenerate} if the mappings $\sigma_x=\sigma(x,\_)$ and $\tau_y=\tau(\_\,,y)$ are bijections,
for all $x,y\in X$.

All solutions we study in this paper are set-theoretic and non-degenerate and we will call them simply \emph{solutions}.
Although researchers usually focus on finite solutions only, in our paper
the set $X$ can be of arbitrary cardinality. 

Important class of solutions is given by multipermutation ones. Gateva-Ivanova studied such solutions in many papers, with special attention to involutive solutions of low multipermutation level. For example, in \cite{GIC12} together with Cameron they gave an equational characterization of square-free involutive solutions of multipermutation level at most $k$. Later on, she investigated in \cite{GI18} in more details  square-free involutive solutions of multipermutation level at most $2$. In \cite{JPZ20} we, together with A. Zamojska-Dzienio, showed that involutive solutions of multipermutation level at most $2$ happen to fall into two classes: $2$-reductive and non $2$-reductive ones. In particular, we gave a combinatorial construction of any such solution. Later, Rump reworked the construction in \cite{Rump22} under the name of \emph{transvection torsor}. 

In \cite[Problem 23]{V19} Vendramin proposed to study non-involutive multipermutation solutions. Following his proposal we characterized in \cite{JP24} a special subclass of such solutions, called $2$-\emph{reductive} ones and,
in this paper we study non $2$-reductive non-involutive solutions of multipermutation level at most $2$. Similarly as in \cite{JPZ20}, we adopt the idea of isotopes from the theory of quasigroups. We show that each solution of multipermutation level at most $2$ may be constructed from a $2$-reductive one. But, contrary to the involutive case, we obtain a characterization in full generality, up to isomorphism. It was possible due to a uniquely determined $2$-reductive square-free isotope of a solution of multipermutation level at most $2$. Our inspiration comes from Rump. In \cite{Rump22} he related each involutive solution of multipermutation level at most $2$ to a unique square-free solution of multipermutation level at most $2$ which in fact is $2$-reductive.   

The paper is organized as follows: in Section~\ref{sec:prelim} we recall basic definitions and properties of solutions. In~Section~\ref{sec:2per} we introduce the definition of $2$-permutational solutions in the general case and we 
prepare several technical lemmas for use in the next sections. Some auxiliary results were obtained using the automated deduction software Prover9 \cite{Prover}.
In~Section~\ref{sec:isotopes} we generalize the concept of isotopes of an arbitrary solution known from the involutive case (see \cite{JPZ20a}). We prove that all isotopes of a given solution have 
some properties in common and we describe conditions under which
	isotopes satisfy certain prescribed properties.
In Section~\ref{sec:isosqfree} we characterize
isotopes of $2$-reductive square-free solutions. 
Key ingredients for these isotopes are diagonal mappings
$U\colon X\to X$, $U(x)= \sigma^{-1}_x(x)$ and $T\colon X\to X$, $T(x)= \tau^{-1}_x(x)$ that turn out to be automorphisms of
$2$-permutational solutions.
In Theorem \ref{th:repres} we show that each $2$-permutational solution is a uniquely defined isotope of a square-free  $2$-reductive one. We also prove the isomorphism Theorem \ref{thm:iso} that determines when two isotopes of a square free  $2$-reductive one represent isomorphic solutions. 
In Section \ref{sec:invol} we restrict the results to involutive solutions. Finally, in Section~\ref{sec:algorithm} we describe the algorithm which 
effectively
constructs all solutions of multipermutation level~2 from square-free $2$-reductive ones. Using this algorithm and the system GAP \cite{gap}, we enumerate all solutions of multipermutation level $2$ up to size $6$. In Proposition \ref{prop:fivecond} we also present a criterion how to recognize whether a $2$-permutational solution is $2$-reductive or not.

\section{Preliminaries}\label{sec:prelim}
If $(X, \sigma,\tau)$ is a solution then directly by the braid relation \eqref{eq:braid} we obtain for $x,y,z\in X$:
\begin{align}
\sigma_x\sigma_y&=\sigma_{\sigma_x(y)}\sigma_{\tau_y(x)}, \label{birack:1}\\
\tau_{\sigma_{\tau_y(x)}(z)}\sigma_x(y)&=\sigma_{\tau_{\sigma_y(z)}(x)}\tau_{z}(y), \label{birack:2}\\
\tau_x\tau_y&=\tau_{\tau_x(y)}\tau_{\sigma_y(x)}. \label{birack:3}
\end{align}

A solution is called {\em bijective} if $r=(\sigma,\tau)$  is a bijection. An example of a bijective solution is given by 
{\em involutive} ones, where $r^2=\mathrm{id}_{X^2}$, i.e. for each $x,y\in X$,
\begin{equation}\label{eq:involutive}
\tau_y(x)=\sigma_{\sigma_x(y)}^{-1}(x)\quad {\rm and} \quad \sigma _x(y)=\tau^{-1}_{\tau_y(x)}(y).
\end{equation} 
Note that, by \cite[Theorem 3.1.]{CJVV22}, each  finite non-degenerate solution is bijective. 
Moreover, it is called
\emph{square-free} if $r(x,x)=(x,x)$, for every $x\in X$.

We say that a solution $(X,\sigma,\tau)$ satisfies  Condition {\bf lri} if, for each $x\in X$, permutations $\sigma_x$ and $\tau_x$ are mutually inverse, i.e. 
\begin{align}
\forall_{x\in X}\quad \sigma_x=\tau_x^{-1}. \tag{lri}
\end{align}
We will say that a solution $(X,\sigma,\tau)$ is \emph{left distributive}, if for every $x,y \in X$:
\begin{align}\label{eq:left}
\sigma_x\sigma_y=\sigma_{\sigma_x(y)}\sigma_x,
\end{align}
and it is \emph{right distributive}, if for every $x,y \in X$:
\begin{align}\label{eq:right}
\tau_x\tau_y=\tau_{\tau_x(y)}\tau_x.
\end{align}
A solution is \emph{distributive} if it is left and right distributive. 
\vskip 3mm
Etingof, Schedler and Soloviev \cite{ESS} introduced, for each solution $(X,\sigma,\tau)$, its \emph{structure group} $G(X,r):=\langle X\mid xy=\sigma_x(y)\tau_y(x)\; \forall x,y\in X\rangle$. A solution is called \emph{injective} if the canonical map $X\to G(X,\sigma,\tau);\; x\mapsto x$ is injective. Involutive solutions are always injective.

\subsection{Retraction solution}
Let $(X,\sigma,\tau)$ be a solution. An equivalence relation $\mathord{\asymp}\subseteq X\times X$ such that for $x_1,x_2,y_1,y_2\in X$ 
\begin{align}\label{congr}
&x_1\asymp x_2\;\; {\rm and} \;\; y_1\asymp y_2\quad \Rightarrow\quad \sigma^{\varepsilon}_{x_1}(y_1)\asymp \sigma^{\varepsilon}_{x_2}(y_2)\quad {\rm and}\quad \tau^{\varepsilon}_{x_1}(y_1)\asymp \tau^{\varepsilon}_{x_2}(y_2),
\end{align}
where $\varepsilon\in \{-1,1\}$, is called a \emph{congruence} of the solution $(X,\sigma,\tau)$.
A congruence induces a quotient solution on its classes.

In \cite{ESS} Etingof, Schedler and Soloviev introduced, for each involutive solution $(X,\sigma,\tau)$, the equivalence relation $\sim$ on the set $X$: for each $x,y\in X$
\begin{align}\label{rel:sim}
x\sim y\quad \Leftrightarrow\quad \sigma_x=\sigma_y
\end{align}
and they showed that 
$\sim$ is a congruence of the solution. 
In the case of non-involutive solution $(X,\sigma,\tau)$,
the equivalence relation $\sim$
need not to 
be a congruence. But it is so if the solution is left distributive (see \cite[Theorem 3.4]{JPZ20}). 
Analogously to \eqref{rel:sim}, we can define the symmetrical relation
\begin{equation}
x\backsim y \quad \Leftrightarrow\quad \tau_x=\tau_y
\end{equation}
and this relation induces a solution on the quotient set $X^{\backsim}$ of every right distributive solution. 
If a 
solution is involutive then $x\sim y$ if and only if $x\backsim y$ \cite[Proposition 2.2]{ESS}.

The intersection of the two relations here defined is
the relation
\begin{equation}\label{eq:retraction}
 x\approx y \quad \Leftrightarrow \quad {x\sim y} \wedge x\backsim y
 \quad \Leftrightarrow \quad {\sigma_x=\sigma_y} \wedge {\tau_x=\tau_y}.
\end{equation}

Lebed and Vendramin showed in \cite{LV} that  the relation $\approx$ is a congruence of injective solutions. 
In \cite{JPZ19} the two of the authors together with Zamojska-Dzienio proved that the relation $\mathrel{\approx}$ induces a solution on the quotient set $X^{\mathrel{\approx}}$  for any solution $(X,\sigma,\tau)$.  A substantially shorter proof for bijective solutions has recently appeared in \cite{CJKAV}.
\begin{de}\label{ret}
Let $(X,\sigma,\tau)$ be a solution. The quotient solution $\mathrm{Ret}(X,\sigma,\tau):=(X^{\approx},\sigma,\tau)$  with $\sigma_{x^{\approx}}(y^{\approx})=\sigma_x(y)^{\approx}$ and $\tau_{y^{\approx}}(x^{\approx})=\tau_y(x)^{\approx}$, for $x^{\approx},y^{\approx}\in X^{\approx}$  and $x\in x^{\approx},\; y\in y^{\approx}$, is called the \emph{retraction} solution of $(X,\sigma,\tau)$. 
We say that
$(X,\sigma,\tau)$ has {\em multipermutation level $2$} if ${\rm Ret}^2(X,\sigma,\tau):={\rm Ret}({\rm Ret}(X,\sigma,\tau))$
has one element only. 
We say that a solution $(X,\sigma,\tau)$ is \emph{ire\-trac\-table} if ${\rm Ret}(X,\sigma,\tau)=(X,\sigma,\tau)$, i.e.
$\approx$ is the trivial relation.

\end{de} 
A solution $(X,\sigma,\tau)$ is called \emph{permutational}, if it has multipermutation level $1$, i.e. for every $x,y\in X$, $\sigma_x=\sigma_y$  and $\tau_x=\tau_y$. It is a \emph{projection} (or \emph{trivial}) solution if for every $x\in X$, $\sigma_x=\tau_x=\id$.

For a non-empty set $X$ and two bijections  $f,g\colon X\to X$ such that $fg=gf$, the  permutational solution $(X,\sigma,\tau)$ with $\sigma_x=f$ and $\tau_y=g$, for each $x,y\in X$, is distributive [Lyubashenko, see \cite{Dr90}].

\subsection{Inverse solution}

If a solution $(X,\sigma,\tau)$ is bijective then there is $r^{-1}\colon X^2\to X^2$ such that $rr^{-1}=r^{-1}r=\id$. It is also true that $(X,r^{-1})$ is a solution. We will call it the \emph{inverse solution} to $(X,\sigma,\tau)$. 

Let for $x\in X$, $\hat{\sigma}_x,\hat{\tau}_x\colon X\to X$ be such that $r^{-1}(x,y)=(\hat{\sigma}_x(y),\hat{\tau}_y(x))$. Clearly, we have that for $x,y\in X$:
\begin{align*}
&(x,y)=rr^{-1}(x,y)=r(\hat{\sigma}_x(y),\hat{\tau}_y(x))=(\sigma_{\hat{\sigma}_x(y)}\hat{\tau}_y(x),\tau_{\hat{\tau}_y(x)}\hat{\sigma}_x(y)), \quad {\rm and}\\
&(x,y)=r^{-1}r(x,y)=r^{-1}(\sigma_x(y),\tau_y(x))=(\hat{\sigma}_{\sigma_x(y)}\tau_y(x),\hat{\tau}_{\tau_y(x)}\sigma_x(y)).
\end{align*}
Hence 
\begin{align}
&\sigma_{\hat{\sigma}_x(y)}\hat{\tau}_y(x)=x\quad \Rightarrow\quad \hat{\tau}_y(x)=\sigma^{-1}_{\hat{\sigma}_x(y)}(x)\quad \Rightarrow\quad \sigma^{-1}_y(x)=\hat{\tau}_{\hat{\sigma}^{-1}_x(y)}(x)\label{rr:1}\\
&\tau_{\hat{\tau}_y(x)}\hat{\sigma}_x(y)=y\quad \Rightarrow\quad \hat{\sigma}_x(y)=\tau^{-1}_{\hat{\tau}_y(x)}(y)\quad \Rightarrow\quad \tau^{-1}_x(y)=\hat{\sigma}_{\hat{\tau}^{-1}_y(x)}(y)\label{rr:2}\\
&\hat{\sigma}_{\sigma_x(y)}\tau_y(x)=x\quad \Rightarrow\quad \tau_y(x)=\hat{\sigma}^{-1}_{\sigma_x(y)}(x)\quad \Rightarrow\quad \hat{\sigma}^{-1}_y(x)=\tau_{\sigma^{-1}_x(y)}(x)\label{rr:3}\\
&\hat{\tau}_{\tau_y(x)}\sigma_x(y)=y\quad \Rightarrow\quad \sigma_x(y)=\hat{\tau}^{-1}_{\tau_y(x)}(y)\quad \Rightarrow\quad \hat{\tau}^{-1}_x(y)=\sigma_{\tau^{-1}_y(x)}(y).\label{rr:4}
\end{align}
By definition, in an involutive solution we always have $\hat{\sigma}_x=\sigma_x$ and $\hat{\tau}_x=\tau_x$. 





\vskip 2mm




\subsection{Displacement group}
In the case of involutive solution $(X,\sigma,\tau)$, the permutation group of a solution is defined as the subgroup $$\Mlt(X)=\langle\sigma_x:x\in X\rangle$$ of the symmetric group $S(X)$ generated by all translations $\sigma_x$, with $x\in X$. 

Since, for each $x,y\in X$, $\tau_y(x)=\sigma_{\sigma_x(y)}^{-1}(x)$, the group $\Mlt(X)$ can be equivalently defined by permutations $\tau_x$. 
%
For the non-involutive case
Bachiller defined in \cite[Definition 3.10]{B18} the permutation group of the solution as some subgroup of the product $S(X)\times S(X)$. Ced\'o et al. showed in \cite{CJKAV} that, for bijective solutions, the permutation group defined by Bachiller may be described equivalently as the group generated by all pairs $(\sigma_x,\hat{\sigma}_x)$, for $x\in X$. Moreover, in \cite[Lemma 1.3]{CJKAV} they proved that such group is isomorphic to the group generated by all pairs of the form $(\sigma_x,\tau^{-1}_x)$, with $x\in X$. We will adopt this fact as a definition. 

\begin{de}
The permutation group $\Mlt(X)$ of the solution $(X,\sigma,\tau)$ is a subgroup of $S(X)\times S(X)$ defined in the following way:
$$\Mlt(X)=\langle(\sigma_x,\tau_x^{-1}):x\in X\rangle.$$
\end{de}

\begin{de}
The displacement group of the solution $(X,\sigma,\tau)$, denoted by Dis$(X)$, is the group 
$$\langle(\sigma_x\sigma^{-1}_y,\tau_x^{-1}\tau_y):x,y\in X\rangle.$$
\end{de}
The displacement group 
is a normal subgroup of the permutation group \cite[Lemma 3.2]{CJKV23} and it
plays important role not only in the theory of racks and quandles, but also in study of solutions of the Yang-Baxter equation (see \cite{JP23}).

\subsection{Isomorphism}
Let us recall that a bijection $\Phi\colon X\to X'$ is an \emph{isomorphism} of two solutions $(X,\sigma,\tau)$ and $(X',\sigma',\tau')$ if, for each $x\in X$,
\begin{equation}\label{isomorphism}
\Phi\sigma_x=\sigma'_{\Phi(x)}\Phi\quad {\rm and}\quad \Phi\tau_x=\tau'_{\Phi(x)}\Phi.
\end{equation}
An isomorphism is called {\em automorphism} of a solution if $(X,\sigma,\tau)=(X',\sigma',\tau')$.
	The group of automorphisms of a solution $(X,\sigma,\tau)$
	is denoted by $\Aut((X,\sigma,\tau))$.

\section{$2$-permutational solutions}\label{sec:2per}

This section is devoted to a syntactic study of solutions of multipermutation level~2. A significant subclass are so called $2$-reductive solutions.

\begin{de}[\cite{JP23a}]
A solution $(X,\sigma, \tau)$ is $2$-\emph{reductive} if, for every $x,y\in X$:
\begin{align}
& \sigma_{\sigma_x(y)}=\sigma_y, \label{eq:red1}\\
& \tau_{\tau_x(y)}=\tau_y, \label{eq:red2}\\
& \sigma_{\tau_x(y)}=\sigma_y, \label{eq:red3}\\
& \tau_{\sigma_x(y)}=\tau_y. \label{eq:red4}
\end{align}
\end{de}

For $2$-reductive solution we also have:
\begin{align}\label{eq:more2red}
& \sigma_{\sigma^{-1}_x(y)}=\sigma_y,\quad\tau_{\tau^{-1}_x(y)}=\tau_y , \quad \tau_{\sigma^{-1}_x(y)}=\tau_y\quad{\rm and}\quad \sigma_{\tau^{-1}_x(y)}=\sigma_y.
\end{align}

The structure of $2$-reductive solutions was described in~\cite{JP23a}. They can be obtained by a combinatorial construction that uses a series of abelian groups and two matrices. Moreover, all the permutations $\sigma_x$ and $\tau_y$ are automorphisms of the solution $(X,\sigma,\tau)$.

In~\cite{JP24} the authors presented an equational characterization of
multipermutation solution. In the case of level~$2$ it uses the following notion:

\begin{de}
A solution $(X,\sigma, \tau)$ is $2$-\emph{permutational} if, for every $x,y,z\in X$:
\begin{align}
& \sigma_{\sigma_x(z)}=\sigma_{\sigma_y(z)}, \label{eq:per1}\\
& \tau_{\tau_x(z)}=\tau_{\tau_y(z)}, \label{eq:per2}\\
& \sigma_{\tau_x(z)}=\sigma_{\tau_y(z)}, \label{eq:per3}\\
& \tau_{\sigma_x(z)}=\tau_{\sigma_y(z)}. \label{eq:per4}
\end{align}
\end{de}

\begin{proposition}[{\cite[Theorem 4.4]{JP24}}]
	\label{cor:multi2per2}
	$(X,\sigma,\tau)$ is a multipermutation solution of level at most $2$ if and only if it is $2$-permutational.
%
\end{proposition}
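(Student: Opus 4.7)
The plan is to directly unwind the definition of multipermutation level at most $2$ and observe that it matches the four equations \eqref{eq:per1}--\eqref{eq:per4} term by term. By Definition~\ref{ret}, $(X,\sigma,\tau)$ has multipermutation level at most $2$ if and only if $\mathrm{Ret}(\mathrm{Ret}(X,\sigma,\tau))$ is a one-element solution, equivalently the first retraction $\mathrm{Ret}(X,\sigma,\tau)$ is either trivial (one element) or permutational in the sense recalled right after Definition~\ref{ret}; in both cases what is required is that on $X^{\approx}$ the induced maps $\sigma_{[x]}$ and $\tau_{[x]}$ do not depend on the class $[x]$.

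Next, I would translate this condition back to $X$. Writing $[x]$ for the $\approx$-class of $x$, the induced maps are $\sigma_{[x]}([z])=[\sigma_x(z)]$ and $\tau_{[x]}([z])=[\tau_x(z)]$; thus $\sigma_{[x]}=\sigma_{[y]}$ and $\tau_{[x]}=\tau_{[y]}$ for all $x,y\in X$ is the same as saying
\begin{align*}
\sigma_x(z)\approx \sigma_y(z)\quad\text{and}\quad\tau_x(z)\approx\tau_y(z),\qquad\text{for every }x,y,z\in X.
\end{align*}
Now I would expand the definition of $\approx$ from \eqref{eq:retraction}: $u\approx v$ means precisely $\sigma_u=\sigma_v$ and $\tau_u=\tau_v$. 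Applying this to $u=\sigma_x(z)$, $v=\sigma_y(z)$ gives \eqref{eq:per1} and \eqref{eq:per4}, while applying it to $u=\tau_x(z)$, $v=\tau_y(z)$ yields \eqref{eq:per3} and \eqref{eq:per2}. Each implication is reversible, so the conjunction of the four conditions \eqref{eq:per1}--\eqref{eq:per4} is equivalent to $\mathrm{Ret}(X,\sigma,\tau)$ being permutational (or trivial), i.e.\ to $(X,\sigma,\tau)$ having multipermutation level at most~$2$.

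There is essentially no obstacle beyond bookkeeping; the only non-trivial input needed is that $\mathrm{Ret}(X,\sigma,\tau)$ is always well defined as a solution, which is the content of the results of~\cite{JPZ19} and~\cite{CJKAV} recalled just before Definition~\ref{ret}. The only small point to be careful about is that ``$\mathrm{Ret}(X,\sigma,\tau)$ has at most one element'' and ``$\mathrm{Ret}(X,\sigma,\tau)$ is permutational'' should both be treated uniformly, which is automatic since in the one-element case the four equalities in \eqref{eq:per1}--\eqref{eq:per4} hold vacuously.
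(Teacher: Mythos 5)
Your argument is correct. The paper itself gives no proof of this proposition --- it is imported wholesale from \cite[Theorem~4.4]{JP24} --- so strictly speaking you are not reproducing the paper's route but supplying a self-contained one. Your chain of equivalences is sound: $\mathrm{Ret}^2(X,\sigma,\tau)$ is a singleton iff any two classes of $X^{\approx}$ are $\approx$-related in the quotient, iff $[\sigma_x(z)]=[\sigma_y(z)]$ and $[\tau_x(z)]=[\tau_y(z)]$ for all $x,y,z$, and expanding $\approx$ via \eqref{eq:retraction} yields exactly the conjunction of \eqref{eq:per1}--\eqref{eq:per4}; your matching of which identity comes from which instance of $\approx$ is also correct. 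The one external ingredient, which you correctly flag, is that $\approx$ is a congruence in the strong sense \eqref{congr} for an arbitrary solution, so that $\mathrm{Ret}$ and $\mathrm{Ret}^2$ are well defined and the induced maps $\sigma_{[x]},\tau_{[x]}$ are well-defined bijections of $X^{\approx}$; this is precisely the nontrivial content of \cite{JPZ19} (or \cite{CJKAV} for bijective solutions) that the paper recalls before Definition~\ref{ret}, and it is presumably why the source \cite{JP24} treats the equivalence as a theorem rather than a remark. Granting that input, nothing in your reduction fails, and the vacuous treatment of the one-element case is handled correctly.
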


Note that in $2$-permutational solution $(X,\sigma, \tau)$, for each $x,y,z,a,b\in X$,
\begin{align*}
&\tau_{\sigma_{\tau_y(x)}(z)}\sigma_x(y)\stackrel{\eqref{birack:2}}=\sigma_{\tau_{\sigma_y(z)}(x)}\tau_{z}(y) \quad 
\stackrel{\eqref{eq:per3},\eqref{eq:per4}}\Rightarrow\quad 
\tau_{\sigma_{a}(z)}\sigma_x(y)=\sigma_{\tau_{b}(x)}\tau_{z}(y).
\end{align*}
This means that Condition \eqref{birack:2} reduces to 
\begin{align}\label{birack2per}
\tau_{\sigma_{a}(z)}\sigma_x=\sigma_{\tau_{b}(x)}\tau_{z},
\end{align}
for $x,z,a,b\in X$.

Moreover, according to \cite[Theorem 3.4]{JP24}, every
$2$-permutational solutions also satisfies
\begin{align}\label{eq:more2per}
&\tau_{\sigma^{-1}_x(y)}=\tau_{\sigma^{-1}_z(y)}\quad{\rm and}\quad \sigma_{\tau^{-1}_x(y)}=\sigma_{\tau^{-1}_z(y)},
\end{align}
 for $x,y,z\in X$.
 

Clearly, each $2$-reductive solution is $2$-permutational  and each square-free $2$-permutational solution is $2$-reductive.





\begin{lemma}\label{lm:basic2per}
Let $(X,\sigma,\tau)$ be a $2$-permutational solution. Then, for every $x,y,z\in X$, we have:
\begin{align}
&\sigma_{\tau_x^{-1}(y)}=\sigma_{\sigma_z(y)},\label{eq:tauinverse1}\\
&\tau_{\sigma_x^{-1}(y)}=\tau_{\tau_z(y)},\label{eq:tauinverse5}\\
&\sigma_{\tau_x\sigma_z(y)}=\sigma_{\sigma_z\tau_x(y)}=\sigma_y,\label{eq:tauinverse3}\\
&\tau_{\sigma_x\tau_z(y)}=\tau_{\tau_z\sigma_x(y)}=\tau_y,\label{eq:tausigmatau}\\
&\sigma_{\sigma^{-1}_y(x)}=\sigma_{\tau_z(x)},\label{eq:sigmasigmainverse}\\
&\tau_{\tau^{-1}_y(x)}=\tau_{\sigma_z(x)}.\label{eq:tautauinverse}
\end{align}
\end{lemma}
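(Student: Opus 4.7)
The plan is to extract everything from the two standard braid relations \eqref{birack:1} and \eqref{birack:3}, after first simplifying them via the $2$-permutational axioms. By \eqref{eq:per1}, the permutation $\sigma_{\sigma_a(y)}$ is independent of $a$; call its common value $F(y)$. Likewise \eqref{eq:per3} gives a well-defined $G(x):=\sigma_{\tau_a(x)}$, and \eqref{eq:per2}, \eqref{eq:per4} yield $K(y):=\tau_{\tau_a(y)}$ and $H(x):=\tau_{\sigma_a(x)}$. With this notation, \eqref{birack:1} collapses to
\[
\sigma_x\sigma_y=F(y)\,G(x),
\]
and \eqref{birack:3} to $\tau_x\tau_y=K(y)\,H(x)$.

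Next I would prove \eqref{eq:tauinverse3} and \eqref{eq:tausigmatau} directly. Specialising $\sigma_a\sigma_b=F(b)G(a)$ at $a=y$, $b=\tau_x(y)$ and noting that $\sigma_{\tau_x(y)}=G(y)$ gives $\sigma_y\,G(y)=F(\tau_x(y))\,G(y)$, so $F(\tau_x(y))=\sigma_y$; by \eqref{eq:per1} this is exactly $\sigma_{\sigma_z\tau_x(y)}=\sigma_y$. The dual specialisation at $a=\sigma_z(y)$, $b=y$, combined with $\sigma_{\sigma_z(y)}=F(y)$, yields $G(\sigma_z(y))=\sigma_y$, and by \eqref{eq:per3} this is $\sigma_{\tau_x\sigma_z(y)}=\sigma_y$. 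The mirror computation for $\tau_a\tau_b=K(b)H(a)$ produces $K(\sigma_x(y))=\tau_y$ and $H(\tau_z(y))=\tau_y$, i.e.\ \eqref{eq:tausigmatau}.

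Finally, the identities involving inverses are immediate corollaries of the four boxed formulas $F(\tau_x(y))=\sigma_y$, $G(\sigma_z(y))=\sigma_y$, $K(\sigma_x(y))=\tau_y$, $H(\tau_z(y))=\tau_y$. For \eqref{eq:tauinverse1}, set $w=\tau_x^{-1}(y)$; then $F(\tau_x(w))=\sigma_w$ reads $F(y)=\sigma_{\tau_x^{-1}(y)}$, and since $F(y)=\sigma_{\sigma_z(y)}$ by definition, \eqref{eq:tauinverse1} follows. Identity \eqref{eq:sigmasigmainverse} follows analogously from $G(\sigma_y(w))=\sigma_w$ by putting $w=\sigma_y^{-1}(x)$, and the two $\tau$-sided equalities \eqref{eq:tauinverse5} and \eqref{eq:tautauinverse} are obtained by the same substitutions using $H$ and $K$. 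The only step requiring any real thought is recognising which choice of $(a,b)$ in the product formulas makes one of the two factors collapse to something of the form $\sigma_{\tau_x(y)}=G(y)$ or $\sigma_{\sigma_z(y)}=F(y)$ (and dually for $\tau$); once that is spotted, each identity is a one-line cancellation on the right.
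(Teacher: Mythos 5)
Your argument is correct and complete; every identity is verified. The underlying engine is the same as in the paper --- collapse the subscripts in \eqref{birack:1} and \eqref{birack:3} using \eqref{eq:per1}--\eqref{eq:per4} and then cancel after a judicious specialisation --- but your organisation of the proof differs in a way worth noting. The paper proves \eqref{eq:tauinverse1} and \eqref{eq:tauinverse5} first, and to do so it only obtains the ``diagonal'' instance $\sigma_{\tau_y^{-1}(y)}=\sigma_{\sigma_z(y)}$ from the substitution $x\mapsto\tau_y^{-1}(y)$, after which it must invoke the externally cited identity \eqref{eq:more2per} (from \cite[Theorem 3.4]{JP24}) to replace $\tau_y^{-1}$ by $\tau_x^{-1}$ for arbitrary $x$; it then derives \eqref{eq:tauinverse3}, \eqref{eq:tausigmatau} partly from these, and finally \eqref{eq:sigmasigmainverse}, \eqref{eq:tautauinverse}. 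You instead prove the composite identities \eqref{eq:tauinverse3} and \eqref{eq:tausigmatau} first, each half by a single clean specialisation of $\sigma_a\sigma_b=F(b)G(a)$ (resp.\ $\tau_a\tau_b=K(b)H(a)$), already with a free parameter in the subscript, and then obtain all four inverse identities by trivial substitutions $w=\tau_x^{-1}(y)$, etc. This makes your proof self-contained --- it does not need \eqref{eq:more2per} --- and slightly more uniform, at the cost of introducing the bookkeeping notation $F,G,K,H$; the paper's version stays closer to the raw formulas but leans on the cited theorem for one step.
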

\begin{proof}
Let $(X,\sigma,\tau)$ be a $2$-permutational solution and $x,y,z,t\in X$. Then\\
\eqref{eq:tauinverse1}:
\begin{align*}
&\sigma_x\sigma_y\stackrel{\eqref{birack:1}}=\sigma_{\sigma_x(y)}\sigma_{\tau_y(x)}\stackrel{\eqref{eq:per1}}=\sigma_{\sigma_z(y)}\sigma_{\tau_y(x)}
 \; \Rightarrow\; \sigma^{-1}_{\sigma_z(y)}\sigma_x=\sigma_{\tau_y(x)}\sigma_y^{-1}\;
\stackrel{x\mapsto \tau^{-1}_y(y)}\Rightarrow\\
&\sigma^{-1}_{\sigma_z(y)}\sigma_{\tau^{-1}_y(y)}=\sigma_y\sigma_y^{-1}\; \Leftrightarrow\;
\sigma_{\tau^{-1}_y(y)}=\sigma_{\sigma_z(y)}\stackrel{\eqref{eq:more2per}}\Rightarrow\;
\sigma_{\tau^{-1}_x(y)}=\sigma_{\sigma_z(y)}.
\end{align*}
\eqref{eq:tauinverse5}:\\
Analogously as for \eqref{eq:tauinverse1}, replacing \eqref{birack:1} by \eqref{birack:3} and \eqref{eq:per1} by \eqref{eq:per4}.
\\
\eqref{eq:tauinverse3}:

\begin{align*}
&\sigma_t\sigma_y\stackrel{\eqref{birack:1}}=\sigma_{\sigma_t(y)}\sigma_{\tau_y(t)}\stackrel{\eqref{eq:per1},\eqref{eq:per3}}=\sigma_{\sigma_z(y)}\sigma_{\tau_x(t)}  \;
\stackrel{t\mapsto \sigma_z(y)}\Rightarrow\;
\sigma_{ \sigma_z(y)}\sigma_y=\sigma_{\sigma_z(y)}\sigma_{\tau_x\sigma_z(y)} 
\quad\Rightarrow\quad
\sigma_y=\sigma_{\tau_x\sigma_z(y)},
\end{align*}
and
\begin{align*}
&\sigma_{\tau_x^{-1}(y)}\stackrel{\eqref{eq:tauinverse1}}=\sigma_{\sigma_z(y)}\;
\stackrel{y\mapsto \tau_x(y)}\Rightarrow\;\sigma_y=\sigma_{\sigma_z\tau_x(y)}.
\end{align*}
\\
\eqref{eq:tausigmatau}:
\begin{align*}
&\tau_t\tau_y\stackrel{\eqref{birack:3}}=\tau_{\tau_t(y)}\tau_{\sigma_y(t)}\stackrel{\eqref{eq:per2},\eqref{eq:per4}}=\tau_{\tau_z(y)}\tau_{\sigma_x(t)}  \;
\stackrel{t\mapsto \tau_z(y)}\Rightarrow\;
\tau_{ \tau_z(y)}\tau_y=\tau_{\tau_z(y)}\tau_{\sigma_x\tau_z(y)} 
\quad\Rightarrow\quad
\tau_y=\tau_{\sigma_x\tau_z(y)},
\end{align*}
and
\begin{align*}
&\tau_{\sigma_x^{-1}(y)}\stackrel{\eqref{eq:tauinverse5}}=\tau_{\tau_z(y)}\;
\stackrel{y\mapsto \sigma_x(y)}\Rightarrow\;\tau_y=\tau_{\tau_z\sigma_x(y)}.
\end{align*}
\eqref{eq:sigmasigmainverse}:
\begin{align*}
&\sigma_{\sigma^{-1}_y(x)}\stackrel{\eqref{eq:tauinverse3}}=\sigma_{\tau_z\sigma_y\sigma_y^{-1}(x)}=\sigma_{\tau_z(x)}.
\end{align*}
\eqref{eq:tautauinverse}:
\begin{align*}
&\tau_{\tau^{-1}_y(x)}\stackrel{\eqref{eq:tausigmatau}}=\tau_{\sigma_z\tau_y\tau_y^{-1}(x)}=\tau_{\sigma_z(x)}. \qedhere
\end{align*}
\end{proof}


Recall, for a distributive solution $(X,\sigma,\tau)$, we have (see also \cite[Lemma 2.8]{JPZ20})
\begin{align*}
&\sigma_{\sigma_x(y)}\sigma_x\stackrel{\eqref{eq:left}}=\sigma_x\sigma_y\stackrel{\eqref{birack:1}}=\sigma_{\sigma_x(y)}\sigma_{\tau_y(x)}\quad \Rightarrow\quad \sigma_x=\sigma_{\tau_y(x)}.
\end{align*}
And similarly, by \eqref{eq:right} and \eqref{birack:3}, 
$$\tau_x=\tau_{\sigma_y(x)}.$$

%

\begin{lemma}\label{lm:perdis}
Each $2$-permutational and distributive solution is $2$-reductive.
\end{lemma}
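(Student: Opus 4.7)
My plan is to exploit the two distributivity consequences derived in the paragraph immediately preceding the lemma, namely $\sigma_x = \sigma_{\tau_y(x)}$ (from \eqref{eq:left} and \eqref{birack:1}) and, symmetrically, $\tau_x = \tau_{\sigma_y(x)}$ (from \eqref{eq:right} and \eqref{birack:3}). These are literally the identities \eqref{eq:red3} and \eqref{eq:red4} (up to renaming the variables), so two of the four axioms of $2$-reductivity come for free from distributivity alone; no use of the $2$-permutational assumption is even needed here.

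The remaining task is to derive \eqref{eq:red1} and \eqref{eq:red2}, and this is where I would invoke Lemma~\ref{lm:basic2per}, which only requires the $2$-permutational hypothesis. For \eqref{eq:red1}, the idea is to feed $\tau_y^{-1}(x)$ into the distributivity identity $\sigma_x = \sigma_{\tau_y(x)}$; this yields $\sigma_{\tau_y^{-1}(x)} = \sigma_x$ for all $x,y \in X$. On the other hand, \eqref{eq:tauinverse1} of Lemma~\ref{lm:basic2per} gives $\sigma_{\tau_x^{-1}(y)} = \sigma_{\sigma_z(y)}$. Chaining these two equalities (after an obvious renaming of variables) produces $\sigma_{\sigma_z(y)} = \sigma_y$, which is exactly \eqref{eq:red1}.

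The argument for \eqref{eq:red2} is entirely dual: substituting $\sigma_y^{-1}(x)$ into the second distributivity consequence $\tau_x = \tau_{\sigma_y(x)}$ gives $\tau_{\sigma_y^{-1}(x)} = \tau_x$, and combining this with \eqref{eq:tauinverse5} of Lemma~\ref{lm:basic2per}, which reads $\tau_{\sigma_x^{-1}(y)} = \tau_{\tau_z(y)}$, immediately yields $\tau_{\tau_z(y)} = \tau_y$.

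There is no real obstacle; the whole proof is a two-line manipulation of already-established identities. The only mild subtlety is noticing that Lemma~\ref{lm:basic2per} was proved from $2$-permutationality \emph{alone}, so its conclusions are available for free, and that its identities \eqref{eq:tauinverse1} and \eqref{eq:tauinverse5} are exactly the bridge needed to upgrade the two "free" distributivity identities \eqref{eq:red3}, \eqref{eq:red4} to the other two reductivity identities \eqref{eq:red1}, \eqref{eq:red2}.
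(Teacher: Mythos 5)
Your proof is correct. It agrees with the paper's proof in its overall shape --- both arguments get \eqref{eq:red3} and \eqref{eq:red4} for free from the distributivity consequences $\sigma_x=\sigma_{\tau_y(x)}$ and $\tau_x=\tau_{\sigma_y(x)}$ established just before the lemma --- but you handle \eqref{eq:red1} and \eqref{eq:red2} differently. The paper's route is self-contained: it writes $\sigma_{\sigma_x(y)}=\sigma_x\sigma_y\sigma_x^{-1}$ via left distributivity, equates the conjugates $\sigma_x\sigma_y\sigma_x^{-1}=\sigma_z\sigma_y\sigma_z^{-1}$ via \eqref{eq:per1}, and specializes $y\mapsto x$ to obtain $\sigma_x=\sigma_{\sigma_z(x)}$, with a symmetric argument for $\tau$. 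You instead invert the already-obtained identity to $\sigma_{\tau_x^{-1}(y)}=\sigma_y$ and chain it with \eqref{eq:tauinverse1} of Lemma~\ref{lm:basic2per} (and dually with \eqref{eq:tauinverse5}), which is legitimate since that lemma is proved from $2$-permutationality alone and precedes this one. Your version is shorter on the page because it offloads work onto Lemma~\ref{lm:basic2per}; the paper's conjugation trick has the mild advantage of using only \eqref{eq:per1}, \eqref{eq:per4} and distributivity directly, without routing through the inverse-translation identities. Both are complete and correct.
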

\begin{proof}
Let $(X,\sigma,\tau)$ be a $2$-permutational and distributive solution. 
For each $x,y,z\in X$, we have
\begin{align*}
&\sigma_x\sigma_y\sigma_x^{-1}\stackrel{\eqref{eq:left}}=\sigma_{\sigma_x(y)}
\stackrel{\eqref{eq:per1}}=
\sigma_{\sigma_z(y)}\stackrel{\eqref{eq:left}}=
\sigma_z\sigma_y\sigma_z^{-1}\quad \Rightarrow\\
&\sigma_x\sigma_y\sigma_x^{-1}=\sigma_z\sigma_y\sigma_z^{-1}\quad
\stackrel{y\mapsto x}\Rightarrow\quad \sigma_x=\sigma_z\sigma_x\sigma_z^{-1}\stackrel{\eqref{eq:left}}=\sigma_{\sigma_z(x)}.
\end{align*}
In the similar way we can show that $\tau_x=\tau_{\tau_z(x)}$. Because, for every distributive solution, $\sigma_x=\sigma_{\tau_y(x)}$ and $\tau_x=\tau_{\sigma_y(x)}$, it finishes the proof.
\end{proof}

\begin{lemma}
Let $(X,\sigma,\tau)$ be a $2$-permutational solution. Then, for every $x,y,z,a,b\in X$, we have:
\begin{align}
&\sigma^{-1}_{y}\sigma_x=\sigma_{\tau_z(x)}\sigma_{\tau_b(y)}^{-1},\label{eq:tauinverse2}\\
&\tau_x\tau_y^{-1}=\tau^{-1}_{\sigma_b^{-1}(y)}\tau_{\sigma_z^{-1}(x)}\label{eq:tauinverse6}\\
&\sigma_x\tau_z=\tau_{\sigma_a(z)}\sigma_{\sigma_b(x)},\label{eq:tauinverse4}\\
&\sigma^{-1}_x\tau_z=\tau_{\sigma^{-1}_a(z)}\sigma^{-1}_{\sigma_b(x)},\label{eq:41}\\
&\sigma_{\sigma_x(y)}\sigma_{\sigma_a(z)}=\sigma_{\sigma_x\sigma_a(z)}\sigma_y,\label{eq:44}\\
&\tau_{\tau_x(y)}\tau_{\tau_a(z)}=\tau_{\tau_x\tau_a(z)}\tau_y.\label{eq:45}
\end{align}

\end{lemma}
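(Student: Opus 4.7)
The plan is to prove each of the six identities by isolating a single instance of \eqref{birack:1}, \eqref{birack:3}, or the mixed relation \eqref{birack2per}, then making a carefully chosen substitution and finally collapsing the nested permutation subscripts using the $2$-permutational identities collected in Lemma~\ref{lm:basic2per}.

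For \eqref{eq:tauinverse2} and \eqref{eq:tauinverse6}, which are dual to one another, the idea is the following. For \eqref{eq:tauinverse2}, first rearrange \eqref{birack:1} as $\sigma_{\sigma_x(y)}^{-1}\sigma_x=\sigma_{\tau_y(x)}\sigma_y^{-1}$ and then substitute $y\mapsto\sigma_x^{-1}(y)$; the left-hand side collapses to $\sigma_y^{-1}\sigma_x$, while on the right, \eqref{eq:per3} rewrites $\sigma_{\tau_{\sigma_x^{-1}(y)}(x)}$ as $\sigma_{\tau_z(x)}$ and \eqref{eq:sigmasigmainverse} rewrites $\sigma_{\sigma_x^{-1}(y)}$ as $\sigma_{\tau_b(y)}$. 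For \eqref{eq:tauinverse6}, rearrange \eqref{birack:3} as $\tau_{\sigma_y(x)}=\tau_{\tau_x(y)}^{-1}\tau_x\tau_y$, substitute $x\mapsto\sigma_y^{-1}(x)$, and apply \eqref{eq:per2} to the outer $\tau$-subscript together with \eqref{eq:tauinverse5} to rewrite $\tau_{\sigma_y^{-1}(x)}$ as $\tau_{\sigma_z^{-1}(x)}$; moving $\tau_y$ across then produces \eqref{eq:tauinverse6}.

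Identities \eqref{eq:tauinverse4} and \eqref{eq:41} both start from the simplified mixed relation \eqref{birack2per}: $\tau_{\sigma_a(z)}\sigma_x=\sigma_{\tau_b(x)}\tau_z$. For \eqref{eq:tauinverse4}, I would substitute $x\mapsto\sigma_b(x)$; the resulting outer subscript $\tau_{b'}(\sigma_b(x))$ reduces to $\sigma_x$ after using \eqref{eq:per3} followed by \eqref{eq:tauinverse3}. For \eqref{eq:41}, I would make the double substitution $x\mapsto\sigma_b(x)$ and $z\mapsto\sigma_a^{-1}(z)$: the factor $\tau_{\sigma_{a'}(\sigma_a^{-1}(z))}$ collapses to $\tau_z$ via \eqref{eq:per4} (by choosing $a'=a$), the $\sigma$-factor on the right again collapses to $\sigma_x$ as before, and a final rearrangement isolates $\sigma_x^{-1}\tau_z$ on the left.

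Finally, for \eqref{eq:44}, I would apply \eqref{birack:1} to the pair $(\sigma_x(y),\sigma_a(z))$, obtaining
\[
\sigma_{\sigma_x(y)}\sigma_{\sigma_a(z)}=\sigma_{\sigma_{\sigma_x(y)}(\sigma_a(z))}\,\sigma_{\tau_{\sigma_a(z)}(\sigma_x(y))}.
\]
The outer index $\sigma_x(y)$ in the first factor is immaterial by \eqref{eq:per1}, so the factor becomes $\sigma_{\sigma_x\sigma_a(z)}$; the second factor equals $\sigma_y$ by \eqref{eq:tauinverse3}. The identity \eqref{eq:45} follows by the strictly dual argument, with \eqref{birack:1}, \eqref{eq:per1}, \eqref{eq:tauinverse3} replaced by \eqref{birack:3}, \eqref{eq:per2}, \eqref{eq:tausigmatau} respectively. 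I do not expect any real obstacle; the only bookkeeping to watch is which index of a nested permutation subscript is free to be renamed under the $2$-permutational identities of Lemma~\ref{lm:basic2per}, and once the correct substitution is made in the correct braid relation, each identity falls out in a single collapse of subscripts.
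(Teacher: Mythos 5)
Your proposal is correct and follows essentially the same route as the paper: each identity is obtained from \eqref{birack:1}, \eqref{birack:3} or \eqref{birack2per} by a substitution followed by collapsing subscripts via \eqref{eq:per1}--\eqref{eq:per4} and Lemma~\ref{lm:basic2per}. The only differences are cosmetic choices of substitution (e.g.\ you substitute $y\mapsto\sigma_x^{-1}(y)$ for \eqref{eq:tauinverse2} where the paper uses \eqref{eq:tauinverse1} and $y\mapsto\tau_b(y)$, and you derive \eqref{eq:41} directly from \eqref{birack2per} rather than from \eqref{eq:tauinverse4}), all of which check out.
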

\begin{proof}
Let $(X,\sigma,\tau)$ be a $2$-permutational solution and $x,y,z,a,b\in X$. Then
\begin{align*}
&\eqref{eq:tauinverse2}:&&\sigma_x\sigma_y\stackrel{\eqref{birack:1}}=\sigma_{\sigma_z(y)}\sigma_{\tau_y(x)} \; \Rightarrow\; \sigma^{-1}_{\sigma_z(y)}\sigma_x=\sigma_{\tau_y(x)}\sigma_y^{-1}\;
\stackrel{\eqref{eq:tauinverse1}}\Rightarrow\\
&&&\sigma^{-1}_{\tau^{-1}_b(y)}\sigma_x=\sigma_{\tau_y(x)}\sigma_y^{-1}\stackrel{\eqref{eq:per1}}=\sigma_{\tau_z(x)}\sigma_y^{-1}\;\stackrel{y\mapsto \tau_b(y)}\Rightarrow\;
\sigma^{-1}_{y}\sigma_x=\sigma_{\tau_z(x)}\sigma_{\tau_b(y)}^{-1}.\\
&\eqref{eq:tauinverse6}:&
&\tau_x\tau_y\stackrel{\eqref{birack:3}}=\tau_{\tau_x(y)}\tau_{\sigma_y(x)}\;\Rightarrow\;\tau^{-1}_{\tau_x(y)}\tau_x=\tau_{\sigma_y(x)}\tau_y^{-1}\; \stackrel{\eqref{eq:tauinverse5}}\Rightarrow\\
&&&\tau^{-1}_{\sigma^{-1}_b(y)}\tau_x=\tau_{\sigma_y(x)}\tau_y^{-1}\stackrel{\eqref{eq:per2}}=\tau_{\sigma_z(x)}\tau_y^{-1}\;\stackrel{x\mapsto \sigma^{-1}_z(x)}\Rightarrow\;
\tau^{-1}_{\sigma^{-1}_b(y)}\tau_{\sigma_z^{-1}(x)}=\tau_x\tau_y^{-1}.\\
&\eqref{eq:tauinverse4}:&
&\tau_{\sigma_{a}(z)}\sigma_x\stackrel{\eqref{birack2per}}=\sigma_{\tau_{b}(x)}\tau_{z}\;\stackrel{x\mapsto \sigma_b(x)}
\Rightarrow\; \tau_{\sigma_{a}(z)}\sigma_{\sigma_b(x)}=\sigma_{\tau_{b}\sigma_b(x)}\tau_{z} \stackrel{\eqref{eq:tauinverse3}}= \sigma_x\tau_z.\\
&\eqref{eq:41}:&
&\tau_z\sigma^{-1}_{\sigma_b(x)}\stackrel{\eqref{eq:tauinverse4}}=\sigma^{-1}_x\tau_{\sigma_a(z)}\;\stackrel{z\mapsto \sigma^{-1}_a(z)}
\Rightarrow\;\tau_{\sigma^{-1}_a(z)}\sigma^{-1}_{\sigma_b(x)}=\sigma^{-1}_x\tau_{\sigma_a\sigma^{-1}_a(z)}=\sigma^{-1}_x\tau_z.\\
&\eqref{eq:44}:&
&\sigma_{\sigma_x(y)}\sigma_{\sigma_a(z)}\stackrel{\eqref{birack:1}}=
\sigma_{\sigma_{\sigma_x(y)}\sigma_a(z)}
\sigma_{\tau_{\sigma_a(z)}\sigma_x(y)}
\stackrel{\eqref{eq:per1}+\eqref{eq:tauinverse3}}=\sigma_{\sigma_x\sigma_a(z)}\sigma_y,
\end{align*}
and analogously we obtain \eqref{eq:45}.
\end{proof}

\begin{de}
	A structure with several binary operations is called {\em entropic} if
	\[(x*_1y)*_2(z*_1 u)=(x*_2z)*_1(y*_2 u),\]
	for all binary operations $*_1$, $*_2$ and all $x,y,z,u\in X$.
\end{de}

In \cite{JPZ20a}, together with Zamojska-Dzienio, two of the authors showed that an involutive solution is $2$-permutational if and only if it is entropic. The implication in one direction holds also for arbitrary solutions.
\begin{lemma}
Each $2$-permutational solution is entropic.
\end{lemma}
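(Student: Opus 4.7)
The plan is to check the entropic identity for every pair of the two natural binary operations on $X$, namely $x *_\sigma y := \sigma_x(y)$ and $x *_\tau y := \tau_y(x)$ (the two coordinate operations of $r$). After evaluating both sides of each identity at the innermost argument, each of the four resulting equations becomes an equality of two permutations of $X$, so it is these four permutation equalities that need to be established.

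For the two ``pure'' cases $*_1=*_2$, the required identities are
\[
\sigma_{\sigma_x(y)}\sigma_z = \sigma_{\sigma_x(z)}\sigma_y \qquad\text{and}\qquad \tau_{\tau_u(z)}\tau_y = \tau_{\tau_u(y)}\tau_z,
\]
for all $x,y,z,u\in X$. These follow from \eqref{eq:44} and \eqref{eq:45} by the following device: given an arbitrary element playing the role of $z$, fix any $a\in X$ and, using the bijectivity of $\sigma_a$ (resp. $\tau_a$) guaranteed by non-degeneracy, write $z=\sigma_a(z')$ (resp. $z=\tau_a(z')$). Substituting into \eqref{eq:44} (resp. \eqref{eq:45}) and relabeling $z'$ yields the required shape on the nose.

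For the two ``mixed'' cases $(*_1,*_2)=(*_\sigma,*_\tau)$ and $(*_\tau,*_\sigma)$, the required identities are
\[
\tau_{\sigma_z(u)}\sigma_x = \sigma_{\tau_z(x)}\tau_u \qquad\text{and}\qquad \sigma_{\tau_y(x)}\tau_u = \tau_{\sigma_y(u)}\sigma_x.
\]
Both are direct specializations of the simplified $2$-permutational braid relation~\eqref{birack2per}, obtained by collapsing its two independent auxiliary parameters $a$ and $b$ to a single common variable (namely $z$ in the first case and $y$ in the second) and then relabeling.

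I do not anticipate any serious obstacle: once the dictionary between the four entropic identities and the three structural equations \eqref{eq:44}, \eqref{eq:45}, \eqref{birack2per} from the preceding lemmas is set up, the proof is a short case-by-case verification. The only thing requiring care is the bookkeeping imposed by the convention $x *_\tau y = \tau_y(x)$, which reverses the positions of the arguments relative to $*_\sigma$ and thereby twists how the variables $x,y,z,u$ appear on each side of the $*_\tau *_\tau$ and mixed identities; this is a purely notational matter.
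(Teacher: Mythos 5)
Your proposal is correct and follows essentially the same route as the paper: the pure $\sigma$- and $\tau$-identities are obtained from \eqref{eq:44} and \eqref{eq:45} via the substitution $z\mapsto\sigma_a^{-1}(z)$ (resp.\ $z\mapsto\tau_a^{-1}(z)$), and the two mixed identities coincide after relabelling and are exactly the specialization $a=b=z$ of \eqref{birack2per}, which the paper re-derives in-line from \eqref{birack:2}, \eqref{eq:per3} and \eqref{eq:per4} rather than citing. Your bookkeeping of the reversed argument order in $x*_\tau y=\tau_y(x)$ is also consistent with the paper's displayed identities.
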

\begin{proof}
Let $(X,\sigma,\tau)$ be a $2$-permutational solution and let $x,y,z,u\in X$. Then
\begin{align*}
&\sigma_{\sigma_x(y)}\sigma_{\sigma_a(z)}\stackrel{\eqref{eq:44}}=
\sigma_{\sigma_x\sigma_a(z)}\sigma_y
\stackrel{z\mapsto \sigma^{-1}_a(z)}
\Rightarrow\; \sigma_{\sigma_x(y)}\sigma_{z}=
\sigma_{\sigma_x(z)}\sigma_y.
\end{align*}
Similarly we can show that $\tau_{\tau_x(y)}\tau_{z}=
\tau_{\tau_x(z)}\tau_y$. Further,
\begin{align*}
&\tau_{\sigma_z(u)}\sigma_x(y)
\stackrel{\eqref{eq:per4}}=
\tau_{\sigma_{\tau_y(x)}(u)}\sigma_x(y)
\stackrel{\eqref{birack:2}}=
\sigma_{\tau_{\sigma_y(u)}(x)}\tau_u(y)
\stackrel{\eqref{eq:per3}}=
\sigma_{\tau_z(x)}\tau_u(y),
\end{align*}
which finishes the proof.
\end{proof}

The converse implication is not true in general since a solution $(X,\sigma,\id)$, where $(X,\sigma)$ is a latin quandle (i.e. idempotent,  distributive quasigroup), is distributive and entropic but not retractable.

\begin{lemma}
Let $(X,\sigma,\tau)$ be a $2$-permutational solution. Then, for every $x,y,a,b\in X$, we have:
\begin{align}
&\sigma_a\sigma_y^{-1}\sigma_x=\sigma_x\sigma_y^{-1}\sigma_a,\label{eq:com1}\\
&\tau_a\tau_y^{-1}\tau_x=\tau_x\tau_y^{-1}\tau_a,\label{eq:com12}\\
&\sigma_a\sigma_y^{-1}\tau_b\tau_y^{-1}=\tau_b\tau_y^{-1}\sigma_a\sigma_y^{-1}.\label{eq:com2}
\end{align}

\end{lemma}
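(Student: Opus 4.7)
The plan is to prove all three identities by first establishing a ``partial commutation'' rule of the form $\sigma_a\sigma_{\tau_z(x)}=\sigma_x\sigma_{\tau_u(a)}$ (and its $\tau$- and mixed analogues), and then using the swap identities \eqref{eq:tauinverse2}, \eqref{eq:tauinverse6}, \eqref{eq:41}, \eqref{eq:tauinverse4} to reduce each side of the target equation to the same normal form. Throughout, the $2$-permutational conditions \eqref{eq:per3}, \eqref{eq:per4} and their corollaries \eqref{eq:tauinverse3}, \eqref{eq:tausigmatau} are what allow the ``free'' indices on either side to be matched.

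For \eqref{eq:com1}, I would first apply \eqref{birack:1} to obtain $\sigma_a\sigma_{\tau_z(x)} = \sigma_{\sigma_a\tau_z(x)}\sigma_{\tau_{\tau_z(x)}(a)}$; by \eqref{eq:tauinverse3} the first factor collapses to $\sigma_x$, and by \eqref{eq:per3} the second factor equals $\sigma_{\tau_u(a)}$ for any $u$. This gives the key rule $\sigma_a\sigma_{\tau_z(x)} = \sigma_x\sigma_{\tau_u(a)}$. Then \eqref{eq:tauinverse2} rewrites $\sigma_y^{-1}\sigma_x = \sigma_{\tau_z(x)}\sigma_{\tau_b(y)}^{-1}$, so that $\sigma_a\sigma_y^{-1}\sigma_x = \sigma_x\sigma_{\tau_u(a)}\sigma_{\tau_b(y)}^{-1}$; a second application of \eqref{eq:tauinverse2}, combined with \eqref{eq:per3} (which says the indices of $\tau$ inside the $\sigma$'s are immaterial), identifies the tail $\sigma_{\tau_u(a)}\sigma_{\tau_b(y)}^{-1}$ with $\sigma_y^{-1}\sigma_a$, completing the identity.

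Identity \eqref{eq:com12} is the exact $\tau$-analogue, obtained by substituting \eqref{birack:3} for \eqref{birack:1}, \eqref{eq:tausigmatau} for \eqref{eq:tauinverse3}, \eqref{eq:per4} for \eqref{eq:per3}, and \eqref{eq:tauinverse6} for \eqref{eq:tauinverse2}. For the mixed identity \eqref{eq:com2}, I would bring both sides to a common canonical form. On the left, \eqref{eq:41} combined with \eqref{eq:tauinverse5} gives $\sigma_y^{-1}\tau_b = \tau_{\tau_z(b)}\sigma_{\sigma_c(y)}^{-1}$, and then \eqref{eq:tauinverse4} together with \eqref{eq:tausigmatau} yields the swap $\sigma_a\tau_{\tau_z(b)} = \tau_b\sigma_{\sigma_d(a)}$, so the LHS normalizes to $\tau_b\sigma_{\sigma_d(a)}\sigma_{\sigma_c(y)}^{-1}\tau_y^{-1}$. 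Manipulating the RHS dually, via a rearrangement of \eqref{eq:tauinverse4} and \eqref{eq:tauinverse5}, produces $\tau_b\sigma_{\sigma_d(a)}\tau_{\tau_f(y)}^{-1}\sigma_y^{-1}$. Equality of the two normal forms then reduces to the single auxiliary identity $\sigma_y\tau_{\tau_c(y)} = \tau_y\sigma_{\sigma_c(y)}$, which is immediate from \eqref{eq:tauinverse4} combined with \eqref{eq:tausigmatau}.

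The main obstacle is bookkeeping rather than conceptual: each swap identity carries several free parameters, and one must verify that the $2$-permutational equations \eqref{eq:per3}, \eqref{eq:per4} and the cancellation relations \eqref{eq:tauinverse3}, \eqref{eq:tausigmatau} really do neutralize every such choice so that the two sides of each commutation identity normalize to the same expression. Once this is done, the three identities follow from straightforward substitution.
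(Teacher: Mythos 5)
Your proof is correct and follows essentially the same route as the paper's: both arguments rely on the swap identities \eqref{eq:tauinverse2}, \eqref{eq:tauinverse6}, \eqref{eq:tauinverse4}, \eqref{eq:41} together with the index-collapsing rules \eqref{eq:per1}--\eqref{eq:per4}, \eqref{eq:tauinverse3}, \eqref{eq:tausigmatau} to push the factors past one another. The only difference is organizational --- you isolate an explicit partial-commutation rule and normalize both sides of \eqref{eq:com2} to a common form, whereas the paper specializes parameters (e.g.\ $z=b=\sigma_x^{-1}(a)$) and rewrites the left-hand side all the way into the right --- and each of your intermediate identities checks out.
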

\begin{proof}
Let $(X,\sigma,\tau)$ be a $2$-permutational solution and $x,y,z,t,w,a,b\in X$. Then\\
\eqref{eq:com1}:
\begin{align*}
&\sigma_{\tau_z(x)}\stackrel{\eqref{birack:1}}=\sigma^{-1}_{\sigma_x(z)}\sigma_x\sigma_z\;\Rightarrow\;\sigma^{-1}_{\tau_z(x)}=\sigma^{-1}_z\sigma^{-1}_x\sigma_{\sigma_x(z)}\;\Rightarrow\\
&\sigma^{-1}_{y}\sigma_x
\stackrel{\eqref{eq:tauinverse2}}=\sigma_{\tau_z(x)}\sigma_{\tau_b(y)}^{-1}=\sigma^{-1}_{\sigma_x(z)}\sigma_x\sigma_z
\sigma^{-1}_b\sigma^{-1}_y\sigma_{\sigma_y(b)}
\;\stackrel{z=b=\sigma^{-1}_x(a)}\Rightarrow\\
&\sigma^{-1}_{y}\sigma_x=
\sigma^{-1}_{a}\sigma_x\sigma^{-1}_y\sigma_{\sigma_y\sigma^{-1}_x(a)}\stackrel{\eqref{eq:per1}}=\sigma^{-1}_{a}\sigma_x\sigma^{-1}_y\sigma_{\sigma_x\sigma^{-1}_x(a)}=\sigma^{-1}_{a}\sigma_x\sigma^{-1}_y\sigma_{a}\;\Rightarrow\\
&\sigma_a\sigma^{-1}_{y}\sigma_x=
\sigma_x\sigma^{-1}_y\sigma_{a}.
\end{align*}
\eqref{eq:com12}: The proof goes analogously as for \eqref{eq:com1}.\\
\eqref{eq:com2}:
\begin{align*}
&\sigma_a\sigma_y^{-1}\tau_b\tau_y^{-1}\stackrel{\eqref{eq:41}}=\sigma_a\tau_{\sigma^{-1}_t(b)}\sigma^{-1}_{\sigma_w(y)}\tau_y^{-1}\stackrel{\eqref{eq:tauinverse4}}=
\sigma_a\tau_{\sigma^{-1}_t(b)}
\tau^{-1}_{\sigma^{-1}_z(y)}\sigma^{-1}_{\sigma^{-1}_w\sigma_w(y)}=\\
&\sigma_a\tau_{\sigma^{-1}_t(b)}
\tau^{-1}_{\sigma^{-1}_z(y)}\sigma^{-1}_{y}\stackrel{\eqref{eq:tauinverse4}}=
\tau_{\sigma_z\sigma_t^{-1}(b)}\sigma_{\sigma_w(a)}
\tau^{-1}_{\sigma^{-1}_z(y)}\sigma^{-1}_{y}\stackrel{\eqref{eq:per1}}=\tau_{b}\sigma_{\sigma_b(a)}
\tau^{-1}_{\sigma^{-1}_z(y)}\sigma^{-1}_{y}\stackrel{\eqref{eq:41}}=\\
&
=\tau_{b}
\tau^{-1}_{y}
\sigma_{a}
\sigma^{-1}_{y}.\qedhere
\end{align*}
\end{proof}



\begin{prop}\label{thm:dis2perabel}
The displacement group of a $2$-permutational solution is an abelian normal subgroup of $\Mlt(X)$.
\end{prop}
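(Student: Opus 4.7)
The normality of $\dis{X}$ in $\Mlt(X)$ is already recorded in \cite[Lemma 3.2]{CJKV23}, so my plan focuses entirely on proving that $\dis{X}$ is abelian. Since $\Mlt(X)\le S(X)\times S(X)$ with coordinatewise multiplication, two generators $(\sigma_a\sigma_b^{-1},\tau_a^{-1}\tau_b)$ and $(\sigma_c\sigma_d^{-1},\tau_c^{-1}\tau_d)$ of $\dis{X}$ commute if and only if the first coordinates commute in $S(X)$ and the second coordinates commute in $S(X)$ independently. Commutation on generators suffices for a group to be abelian, so this reduction is all that is needed. Notably, the cross-commutation of $\sigma$ with $\tau$ controlled by \eqref{eq:com2} plays no role here, since the two kinds of factors live in different coordinates of the ambient product.

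For the first coordinate I would apply \eqref{eq:com1} twice. With the substitution $(a,y,x)\leftarrow(a,b,c)$, the identity gives $\sigma_a\sigma_b^{-1}\sigma_c=\sigma_c\sigma_b^{-1}\sigma_a$, hence
\[
\sigma_a\sigma_b^{-1}\sigma_c\sigma_d^{-1}=\sigma_c\sigma_b^{-1}\sigma_a\sigma_d^{-1}.
\]
To complete the swap one needs $\sigma_b^{-1}\sigma_a\sigma_d^{-1}=\sigma_d^{-1}\sigma_a\sigma_b^{-1}$; but inverting both sides of this desired equality produces $\sigma_d\sigma_a^{-1}\sigma_b=\sigma_b\sigma_a^{-1}\sigma_d$, which is exactly \eqref{eq:com1} under the substitution $(a,y,x)\leftarrow(d,a,b)$. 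Combining the two steps yields $\sigma_a\sigma_b^{-1}\sigma_c\sigma_d^{-1}=\sigma_c\sigma_d^{-1}\sigma_a\sigma_b^{-1}$, as required. A completely parallel double application of \eqref{eq:com12} disposes of the $\tau$-coordinate.

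I do not anticipate any genuine obstacle: the argument is a short syntactic calculation inside $S(X)\times S(X)$. The only thing to watch is the bookkeeping of substitutions in \eqref{eq:com1} and \eqref{eq:com12}, together with the (mildly counterintuitive) observation that \eqref{eq:com2}, though morally a commutation identity of the same flavor, is not required for this proposition precisely because of the componentwise structure of the ambient product group.
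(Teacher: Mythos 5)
Your proposal is correct and takes essentially the same route as the paper: the paper likewise cites \cite[Lemma 3.2]{CJKV23} for normality and asserts that commutativity follows "directly" from \eqref{eq:com1} and \eqref{eq:com12}, which is exactly the componentwise generator computation you carry out in detail. Your explicit double application of \eqref{eq:com1} (and its $\tau$-analogue) and the remark that \eqref{eq:com2} is not needed simply make precise what the paper leaves implicit.
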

\begin{proof}
Commutativity of a displacement group of a $2$-permutational solution follows directly by \eqref{eq:com1} and \eqref{eq:com12}. Moreover, by \cite[Lemma 3.2]{CJKV23} the displacement group 
is a normal subgroup of $\Mlt(X)$.
\end{proof}

By \eqref{rr:1} and \eqref{rr:2}, it is easy to note that if a solution $(X,\sigma,\tau)$ is permutational then it is bijective and the inverse solution $(X,\hat\sigma,\hat\tau)$ is permutational too. The same is also true for bijective solutions of multipermutation level $2$.

\begin{lemma}
The inverse solution to a bijective solution of multipermutation level~$2$ is a solution of multipermutation level~$2$.
\end{lemma}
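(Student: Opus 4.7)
The plan is to invoke Proposition~\ref{cor:multi2per2} and reduce the claim to showing that the inverse solution $(X,\hat\sigma,\hat\tau)$ is itself $2$-permutational, i.e., satisfies \eqref{eq:per1}--\eqref{eq:per4}. My first task will be to rewrite the inverse maps in a form adapted to our hypothesis: from \eqref{rr:3} one has $\hat\sigma^{-1}_y(x)=\tau_{\sigma^{-1}_x(y)}(x)$, and \eqref{eq:more2per} implies that the permutation $\tau_{\sigma^{-1}_x(y)}$ is independent of $x$, so by \eqref{eq:tauinverse5} it equals $\tau_{\tau_a(y)}$ for any $a\in X$. This will give $\hat\sigma_y=\tau^{-1}_{\tau_a(y)}$, and symmetrically $\hat\tau_x=\sigma^{-1}_{\sigma_a(x)}$, for every $a\in X$.

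The core step, and the main obstacle, will be to establish the following: for every $a,b,z\in X$, both $\hat\sigma_a(z)\approx\hat\sigma_b(z)$ and $\hat\tau_a(z)\approx\hat\tau_b(z)$ hold in the \emph{original} solution. Concretely, writing $\hat\sigma_a(z)=\tau^{-1}_{\tau_a(a)}(z)$, the identities \eqref{eq:more2per} and \eqref{eq:tautauinverse} will show respectively that $\sigma_{\hat\sigma_a(z)}$ and $\tau_{\hat\sigma_a(z)}$ depend only on $z$; symmetrically, \eqref{eq:sigmasigmainverse} and \eqref{eq:tauinverse5} applied to $\hat\tau_a(z)=\sigma^{-1}_{\sigma_a(a)}(z)$ will handle the $\hat\tau$ case.

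To finish, I will use that $\approx$ is a congruence of the bijective solution $(X,\sigma,\tau)$ (as recalled around \eqref{eq:retraction}): if $u\approx v$ then $\tau_a(u)\approx\tau_a(v)$ and $\sigma_a(u)\approx\sigma_a(v)$, so the formulas from the first step force $\hat\sigma_u=\hat\sigma_v$ and $\hat\tau_u=\hat\tau_v$. Thus $\approx$ is contained in the retraction congruence of $(X,\hat\sigma,\hat\tau)$, and combining this inclusion with the core step will promote the equivalences $\hat\sigma_a(z)\approx\hat\sigma_b(z)$ and $\hat\tau_a(z)\approx\hat\tau_b(z)$ into precisely the four conditions \eqref{eq:per1}--\eqref{eq:per4} for $(X,\hat\sigma,\hat\tau)$; Proposition~\ref{cor:multi2per2} then closes the argument. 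The delicate part is the core step: it rests on matching each of $\sigma_{\hat\sigma_a(z)},\tau_{\hat\sigma_a(z)},\sigma_{\hat\tau_a(z)},\tau_{\hat\tau_a(z)}$ with exactly the right identity from Lemma~\ref{lm:basic2per}; once this is in place, the rest of the argument is purely formal.
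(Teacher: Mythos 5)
Your proposal is correct, and it reorganizes the second half of the argument in a way that differs from the paper's proof. Both proofs start identically: from \eqref{rr:3}, \eqref{rr:4} together with \eqref{eq:more2per}, \eqref{eq:tauinverse1} and \eqref{eq:tauinverse5} one gets $\hat\sigma_y=\tau^{-1}_{\tau_a(y)}$ and $\hat\tau_x=\sigma^{-1}_{\sigma_a(x)}$ for every $a$. From there the paper verifies \eqref{eq:per1}--\eqref{eq:per4} for $(X,\hat\sigma,\hat\tau)$ by direct computation; the two ``pure'' conditions come out quickly ($\hat\tau_{\hat\tau_a(y)}=\sigma_y^{-1}$ and $\hat\sigma_{\hat\sigma_a(y)}=\tau_y^{-1}$, independent of $a$), but the two mixed conditions $\hat\tau_{\hat\sigma_a(y)}=\hat\tau_{\hat\sigma_b(y)}$ and $\hat\sigma_{\hat\tau_a(y)}=\hat\sigma_{\hat\tau_b(y)}$ require fairly long chains involving \eqref{eq:41}. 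You instead prove the single statement $\hat\sigma_a(z)\approx\hat\sigma_b(z)$ and $\hat\tau_a(z)\approx\hat\tau_b(z)$ in the original solution (which is exactly what \eqref{eq:more2per}, \eqref{eq:tautauinverse}, \eqref{eq:sigmasigmainverse} and \eqref{eq:tauinverse5} give, as you indicate), and then use that $\approx$ is a congruence of any solution to transfer it: $u\approx v$ implies $\tau_c(u)\approx\tau_c(v)$ and $\sigma_c(u)\approx\sigma_c(v)$, hence $\hat\sigma_u=\tau^{-1}_{\tau_c(u)}=\hat\sigma_v$ and $\hat\tau_u=\hat\tau_v$, and all four conditions \eqref{eq:per1}--\eqref{eq:per4} for $(X,\hat\sigma,\hat\tau)$ drop out at once. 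This buys uniformity and avoids the longest computations in the paper's proof, at the cost of invoking the (nontrivial, but already recorded) fact that $\approx$ is a congruence. The only item you omit is the paper's closing remark that the inverse solution cannot drop to multipermutation level $1$ unless the original solution already has level $1$; with the paper's definition of ``level $2$'' this is a one-line observation, but you should include it if ``level $2$'' is read as ``exactly $2$''.
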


\begin{proof}
Let $(X,\sigma,\tau)$ be a bijective solution of multipermutation level at most $2$ and let  $(X,\hat\sigma,\hat\tau)$ be the inverse solution. Then for $x,y,z,t,u,a,b\in X$ we have
\begin{align*}
&\hat\tau^{-1}_x(y)\stackrel{\eqref{rr:4}}=\sigma_{\tau_y^{-1}(x)}(y)\stackrel{\eqref{eq:tauinverse1}}=\sigma_{\sigma_z(x)}(y)\quad \Rightarrow\quad \hat\tau_x=\sigma^{-1}_{\sigma_z(x)}\quad \Rightarrow\\
&\hat\tau_{\hat\tau_a(y)}=\sigma^{-1}_{\sigma_z\hat\tau_a(y)}\stackrel{\eqref{eq:per1}}=\sigma^{-1}_{\sigma_{\hat\sigma_y(a)}\hat\tau_a(y)}\stackrel{\eqref{rr:1}}=\sigma_y^{-1}\quad \Rightarrow\quad \hat\tau_{\hat\tau_a(y)}=\sigma_y^{-1}=\hat\tau_{\hat\tau_b(y)};\\
&\hat\sigma_y^{-1}(x)\stackrel{\eqref{rr:3}}=\tau_{\sigma^{-1}_x(y)}(x)\stackrel{\eqref{eq:tauinverse5}}=\tau_{\tau_z(y)}(x)\quad \Rightarrow\quad \hat\sigma_y=\tau^{-1}_{\tau_z(y)}\quad \Rightarrow\\
&\hat\sigma_{\hat\sigma_a(y)}=\tau^{-1}_{\tau_z\hat\sigma_a(y)}\stackrel{\eqref{eq:per4}}=\tau^{-1}_{\tau_{\hat\tau_y(a)}\hat\sigma_a(y)}\stackrel{\eqref{rr:2}}=\tau^{-1}_y\quad \Rightarrow\quad \hat\sigma_{\hat\sigma_a(y)}=\tau^{-1}_y=\hat\sigma_{\hat\sigma_b(y)};\\
&\hat\tau_x=\sigma^{-1}_{\sigma_z(x)}\quad \stackrel{x\mapsto \hat\sigma_a(y)}\Rightarrow\quad \hat\tau_{\hat\sigma_a(y)}=\sigma^{-1}_{\sigma_z\hat\sigma_a(y)}\stackrel{\eqref{rr:2}}=
\sigma^{-1}_{\sigma_z\tau^{-1}_{\hat\tau_y(a)}(y)}\stackrel{\eqref{eq:41}}=\\
&\sigma^{-1}_{\tau^{-1}_{\sigma_t\hat\tau_y(a)}\sigma_{\sigma^{-1}_s(z)}(y)}\stackrel{\eqref{eq:per1}+\eqref{eq:tauinverse1}}=\sigma^{-1}_{\tau^{-1}_u\sigma_{\sigma^{-1}_s(z)}(y)}=\sigma^{-1}_{\tau^{-1}_{\sigma_t\hat\tau_y(b)}\sigma_{\sigma^{-1}_s(z)}(y)}=\hat\tau_{\hat\sigma_b(y)};\\
&\hat\sigma_y=\tau^{-1}_{\tau_z(y)}\quad \stackrel{y\mapsto \hat\tau_a(y)}\Rightarrow\quad \hat\sigma_{\hat\tau_a(y)}=\tau^{-1}_{\tau_z\hat\tau_a(y)}\stackrel{\eqref{rr:1}}=\hat\sigma_{\hat\tau_a(y)}=\tau^{-1}_{\tau_z\sigma^{-1}_{\hat\sigma_y(a)}(y)}\stackrel{\eqref{eq:41}}=\\
&\tau^{-1}_{\sigma^{-1}_{\sigma^{-1}_t\hat\sigma_y(a)}\tau_{\sigma_s(z)}(y)}\stackrel{\eqref{eq:tauinverse5}+\eqref{eq:per4}}=\tau^{-1}_{\sigma^{-1}_u\tau_{\sigma_s(z)}(y)}=\tau^{-1}_{\sigma^{-1}_{\sigma^{-1}_t\hat\sigma_y(b)}\tau_{\sigma_s(z)}(y)}=\hat\sigma_{\hat\tau_b(y)}.
\end{align*}
As a conclusion, according to Proposition \ref{cor:multi2per2},
	the inverse solution is of multipermutation level at most~$2$.
	If $(X,\hat\sigma,\hat\tau)$ is of multipermutation level~$1$ then $(X,\sigma,\tau)$ is of multipermutation level~$1$, which finishes the proof.
\end{proof}

\begin{ques}
	Is it true that the inverse solution to a solution of multipermutation level~$k$ is of multipermutation level~$k$?
\end{ques}

\section{Isotopes of solutions}\label{sec:isotopes}
In this section we will show that each $2$-permutational solution originates from some $2$-reductive one. 
In the full generality, the concept of \emph{isotopy} 
used in the theory of quasigroups 
consists of permuting rows, columns and symbols of the multiplication table, each of them using a different permutation.
However, permuting the symbols is not important,
up to isomorphism, unless we consider special constants.
In our context we do not have important constants and therefore we do not need to consider the permutation of symbols. Moreover, we are dealing with one-sided quasigroups and therefore it makes sense to
permute either the columns or the rows only.
This narrowed concept of isotopy was
successfully applied in the involutive case already (see \cite{JPZ20a})
giving us a tool to connect involutive solutions of multipermutation level~2 with involutive 2-reductive solutions. In this section we show that this concept generalizes straightforwardly.

Throughout all the section, the permutations of a 2-reductive solution are denoted by $L_x$ and $\mathbf{R}_x$, rather than $\sigma_x$ and $\tau_x$,
in order to have a clear distinction between two different structures on the same set.
 
Let $(X,\sigma,\tau)$ be a solution and $\pi_1$ and $\pi_2$ be two bijections on the set $X$. Let us define, for each $x\in X$, new bijections:
\begin{align}
&\mu_x=\sigma_x\pi_1\quad {\rm and} \quad \nu_x=\tau_x\pi_2.
\end{align} 
Then it is easy to notice that \eqref{birack:1}--\eqref{birack:3} are satisfied for $\mu_x$ and $\nu_x$ if and only if, for $x,y,z\in X$,
\begin{align}
\sigma_x\pi_1\sigma_y&=\sigma_{\sigma_x\pi_1(y)}\pi_1\sigma_{\tau_y\pi_2(x)},\label{gis1}\\
\tau_x\pi_2\tau_y&=\tau_{\tau_x\pi_2(y)}\pi_2\tau_{\sigma_y\pi_1(x)},\label{gis3}\\
\tau_{\sigma_{\tau_y\pi_2(x)}\pi_1(z)}\pi_2\sigma_x\pi_1(y)&=
\sigma_{\tau_{\sigma_y\pi_1(z)}\pi_2(x)}\pi_1\tau_z\pi_2(y).\label{gis2}
\end{align}
\begin{de}
Let $(X,\sigma,\tau)$ be a solution and $\pi_1$ and $\pi_2$ be two bijections on the set $X$ satisfying \eqref{gis1}--\eqref{gis2}. The solution $(X,\mu,\nu)$, with bijections $\mu_x=\sigma_x\pi_1$ and $\nu_x=\tau_x\pi_2$, is called $(\pi_1,\pi_2)$-\emph{isotope} of $(X,\sigma,\tau)$.
In such a case we will say that solutions $(X,\sigma,\tau)$ and $(X,\mu,\nu)$ are \emph{isotopic}. 
\end{de} 

Isotopy is a weaker notion than isomorphism but there are still some properties that remain the same.
For instance, it is evident that isotopes have the same equivalence $\approx$.

\begin{lemma}\label{lm:disiso}
All isotopes of a given solution have the same displacement group.
\end{lemma}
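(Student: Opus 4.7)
My plan is to show $\dis{X,\mu,\nu}=\dis{X,\sigma,\tau}$ as subgroups of $S(X)\times S(X)$ by comparing generators directly. First I would simplify the isotope generators:
\[\mu_x\mu_y^{-1}=\sigma_x\pi_1\pi_1^{-1}\sigma_y^{-1}=\sigma_x\sigma_y^{-1},\qquad \nu_x^{-1}\nu_y=\pi_2^{-1}\tau_x^{-1}\tau_y\pi_2.\]
The first coordinates thus already coincide verbatim with those of the standard generators of $\dis{X,\sigma,\tau}$; the second coordinates appear as conjugates by $\pi_2$ and need to be reconciled.

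For the reconciliation I would invoke the isotopy condition \eqref{gis3}. Setting $z=\tau_x\pi_2(y)$, a rearrangement of \eqref{gis3} yields
\[\pi_2^{-1}\tau_x^{-1}\tau_z\pi_2=\tau_y\,\tau_{\sigma_y\pi_1(x)}^{-1},\]
which re-expresses each second-coordinate conjugate as a plain product of two original $\tau$'s. Combined with the parallel rewrite coming from \eqref{gis1}, each isotope generator now takes the form $(\sigma_x\sigma_z^{-1},\,\tau_y\tau_a^{-1})$ with $y=\pi_2^{-1}\tau_x^{-1}(z)$ and $a=\sigma_y\pi_1(x)$. An application of the braid relations \eqref{birack:1} and \eqref{birack:3} should then convert this pair into a product of standard generators $(\sigma_u\sigma_v^{-1},\tau_u^{-1}\tau_v)$ of $\dis{X,\sigma,\tau}$, giving the inclusion $\dis{X,\mu,\nu}\subseteq\dis{X,\sigma,\tau}$.

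For the reverse inclusion I would exploit the symmetric observation that $(X,\sigma,\tau)$ is itself the $(\pi_1^{-1},\pi_2^{-1})$-isotope of $(X,\mu,\nu)$, so the same chain of manipulations applied with the roles of $(\sigma,\tau)$ and $(\mu,\nu)$ interchanged yields $\dis{X,\sigma,\tau}\subseteq\dis{X,\mu,\nu}$. The main obstacle will be the coordinate-matching in the middle step: after applying \eqref{gis3} the pair $(\sigma_x\sigma_z^{-1},\tau_y\tau_a^{-1})$ has its two coordinates indexed by different elements of $X$, so one must juggle both isotopy conditions \eqref{gis1}, \eqref{gis3} together with the braid relations \eqref{birack:1}, \eqref{birack:3} in a coordinated way, so that the rewritten expression really collapses to a product of standard $\dis{X,\sigma,\tau}$ generators with matching indices in both coordinates simultaneously.
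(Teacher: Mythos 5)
Your computation of the first coordinate agrees verbatim with the paper. The divergence is in the second coordinate, and it is exactly where the paper's proof is trivial and yours is not: the paper's entire argument is that \emph{both} left quotients cancel on the nose, namely $\mu_x\mu_y^{-1}=\sigma_x\pi_1\pi_1^{-1}\sigma_y^{-1}=\sigma_x\sigma_y^{-1}$ \emph{and} $\nu_x\nu_y^{-1}=\tau_x\pi_2\pi_2^{-1}\tau_y^{-1}=\tau_x\tau_y^{-1}$, because the isotopy permutations are composed on the right. In other words, the paper reads the second coordinates of the displacement generators as the left quotients $\tau_x\tau_y^{-1}$ --- which is also how the displacement group is handled elsewhere in the paper, e.g.\ commutativity in Proposition~\ref{thm:dis2perabel} is deduced from \eqref{eq:com1} and \eqref{eq:com12}, which are statements about $\sigma_a\sigma_y^{-1}$ and $\tau_a\tau_y^{-1}$. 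You instead take the displayed generator literally as $\nu_x^{-1}\nu_y=\pi_2^{-1}\tau_x^{-1}\tau_y\pi_2$ and set out to undo the conjugation by $\pi_2$; that observation about the printed definition is legitimate, but it sends you down a path the paper never takes.

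The genuine gap is that the reconciliation you propose is never carried out. Your rearrangement of \eqref{gis3} is correct and yields $\pi_2^{-1}\tau_x^{-1}\tau_z\pi_2=\tau_y\tau_a^{-1}$ with $y=\pi_2^{-1}\tau_x^{-1}(z)$ and $a=\sigma_y\pi_1(x)$, but this leaves a pair $(\sigma_x\sigma_z^{-1},\tau_y\tau_a^{-1})$ whose two coordinates are indexed by unrelated elements of $X$, and the step that is supposed to convert such a pair into a product of generators $(\sigma_u\sigma_v^{-1},\tau_u^{-1}\tau_v)$ with matching indices is only asserted (``should then convert''); you yourself flag the index matching as the main obstacle and leave it open. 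As written, the argument is a plan whose central step is missing, and I do not see how \eqref{birack:1} and \eqref{birack:3} alone close it. The efficient repair is not to chase the conjugate through the braid relations but to compute the isotopy-invariant quotients $\mu_x\mu_y^{-1}$ and $\nu_x\nu_y^{-1}$ as the paper does (if you insist on the generators in the form $(\sigma_x\sigma_y^{-1},\tau_x^{-1}\tau_y)$, you would first have to justify that the two forms of the second coordinate generate the same group, which is a separate statement about the solution itself and not something your generator-by-generator matching establishes).
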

\begin{proof}
Let $(X,\mu,\nu)$ be a $(\pi_1,\pi_2)$-isotope of a solution $(X,\sigma,\tau)$. This means that for each $x\in X$, 
$$\mu_x=\sigma_x\pi_1\quad {\rm and}\quad \nu_x=\tau_x\pi_2.$$
For $x,y\in X$, we have:
$$\mu_x\mu_y^{-1}=\sigma_x\pi_1\pi_1^{-1}\sigma_y^{-1}=\sigma_x\sigma_y^{-1}\quad {\rm and}\quad \nu_x\nu_y^{-1}=\tau_x\pi_2\pi_2^{-1}\tau_y^{-1}=\tau_x\tau_y^{-1}. \qedhere$$
\end{proof}

As expected, being isotopic is a symmetric relation.

\begin{lemma}\label{lem:iso-sym}
	Let a solution $(X,\mu,\nu)$ be a $(\pi_1,\pi_2)$-isotope of a solution $(X,\sigma,\tau)$. Then the solution $(X,\sigma,\tau)$
	is a $(\pi_1^{-1},\pi_2^{-1})$-isotope of $(X,\mu,\nu)$.
\end{lemma}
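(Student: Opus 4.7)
The plan is essentially to unwind the definition of isotope and observe that the relation is algebraically symmetric. First I would rewrite the defining equations $\mu_x=\sigma_x\pi_1$ and $\nu_x=\tau_x\pi_2$ as
\[\sigma_x=\mu_x\pi_1^{-1}\qquad\text{and}\qquad \tau_x=\nu_x\pi_2^{-1},\]
which is exactly the shape required by the isotope definition, now with $(\pi_1^{-1},\pi_2^{-1})$ playing the role of the twisting pair and with $(\mu,\nu)$ playing the role of the base solution.

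Next I would verify the three structural requirements of the definition applied to the pair $(X,\mu,\nu)$ with twist $(\pi_1^{-1},\pi_2^{-1})$. The requirement that $(X,\mu,\nu)$ be a solution is given by hypothesis. The requirement that the new maps $\mu_x\pi_1^{-1}$ and $\nu_x\pi_2^{-1}$ satisfy the braid relation is precisely the assertion that the pair $(\sigma,\tau)$ satisfies \eqref{birack:1}--\eqref{birack:3}, which also holds by hypothesis. Finally, the compatibility conditions \eqref{gis1}--\eqref{gis2}, written with $(\mu,\nu)$ as base and $(\pi_1^{-1},\pi_2^{-1})$ as twist, reduce to the statement that the maps $\mu_x\pi_1^{-1}=\sigma_x$ and $\nu_x\pi_2^{-1}=\tau_x$ satisfy the braid relations \eqref{birack:1}--\eqref{birack:3}; this is again the assumption that $(X,\sigma,\tau)$ is a solution.

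I do not anticipate any real obstacle: the proof is essentially a bookkeeping check, made transparent by the fact that every occurrence of $\pi_1$ has an accompanying $\pi_1^{-1}$ when substituted (and similarly for $\pi_2$), so all the twists cancel. The only thing to be careful about is to confirm that conditions \eqref{gis1}--\eqref{gis2} for the reverse direction, once the substitutions $\mu_x=\sigma_x\pi_1$ and $\nu_x=\tau_x\pi_2$ are made, really do collapse to the original braid relations for $(X,\sigma,\tau)$; this is a direct calculation with no hidden pitfalls.
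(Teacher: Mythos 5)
Your proof is correct and matches the paper's intent: the paper simply declares the lemma ``Evident,'' and your argument is exactly the bookkeeping it has in mind, namely that $\sigma_x=\mu_x\pi_1^{-1}$, $\tau_x=\nu_x\pi_2^{-1}$, and that the compatibility conditions \eqref{gis1}--\eqref{gis2} for the reverse isotope are, by their very derivation, equivalent to the braid relations for $(\sigma,\tau)$, which hold by hypothesis.
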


\begin{proof}
	Evident.
\end{proof}

Analogously we can see that being isotopic is a transitive relation and therefore isotopic solutions form an equivalence class.

\vskip 2mm
We shall focus on our specific classes of solutions only.
In the case of a $2$-permutational solution $(X,\sigma,\tau)$, Condition \eqref{gis2} reduces to:
\begin{align}
&\tau_{\sigma_{a}\pi_1(y)}\pi_2\sigma_x\pi_1=
\sigma_{\tau_{b}\pi_2(x)}\pi_1\tau_y\pi_2,\label{gis31}
\end{align}
for $x,y,a,b\in X$.

	\begin{lemma}\label{lem:2-red_iso}
		Let $(X,\sigma,\tau)$ be a 
		solution and $\pi_1$ and $\pi_2$ be bijections  satisfying Conditions \eqref{gis1}, \eqref{gis3} and \eqref{gis31}. The $(\pi_1,\pi_2)$-isotope $(X,\sigma,\tau)$ is $2$-reductive if and only if, for each $x,y\in X$:
		\begin{align}
			&\sigma_{\sigma_x\pi_1(y)}=\sigma_{\tau_x\pi_2(y)}=\sigma_y,\\
			&\tau_{\sigma_x\pi_1(y)}=\tau_{\tau_x\pi_2(y)}=\tau_y.
		\end{align}
		
	\end{lemma}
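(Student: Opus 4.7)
The plan is a direct unfolding of the four $2$-reductivity conditions \eqref{eq:red1}--\eqref{eq:red4} for the isotope $(X,\mu,\nu)$ with $\mu_x=\sigma_x\pi_1$ and $\nu_x=\tau_x\pi_2$. The hypotheses that $\pi_1$ and $\pi_2$ satisfy \eqref{gis1}, \eqref{gis3} and \eqref{gis31} are used only once and in a black-box manner: they guarantee, by the preceding definition, that $(X,\mu,\nu)$ is a solution, so asking whether it is $2$-reductive makes sense.

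Then I would translate each of the four $2$-reductivity identities applied to $(X,\mu,\nu)$ back into identities for $\sigma$ and $\tau$. The condition $\mu_{\mu_x(y)}=\mu_y$ reads
\[\sigma_{\sigma_x\pi_1(y)}\pi_1=\sigma_y\pi_1,\]
and since $\pi_1$ is a bijection, right-cancellation yields $\sigma_{\sigma_x\pi_1(y)}=\sigma_y$. In the same way:
\[
\mu_{\nu_x(y)}=\mu_y\iff \sigma_{\tau_x\pi_2(y)}=\sigma_y,\qquad
\nu_{\mu_x(y)}=\nu_y\iff \tau_{\sigma_x\pi_1(y)}=\tau_y,
\]
and $\nu_{\nu_x(y)}=\nu_y$ becomes $\tau_{\tau_x\pi_2(y)}=\tau_y$ after right-cancelling $\pi_2$. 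Conjoining these four equivalences gives exactly the system displayed in the lemma, and this is what I would present as the whole proof.

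I do not anticipate any real obstacle: the content of the statement is purely bookkeeping, and both directions proceed by the same right-cancellation. The only point worth flagging explicitly is that nowhere in the argument is the explicit form of \eqref{gis1}, \eqref{gis3} or \eqref{gis31} invoked — these appear only as a standing assumption ensuring that $(X,\mu,\nu)$ is a valid solution to which the $2$-reductivity axioms can be applied.
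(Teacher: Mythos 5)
Your proof is correct and is essentially identical to the paper's: both unfold the four $2$-reductivity identities for the isotope $L_x=\sigma_x\pi_1$, $\mathbf{R}_x=\tau_x\pi_2$ and right-cancel the bijections $\pi_1$, $\pi_2$, with the hypotheses \eqref{gis1}, \eqref{gis3}, \eqref{gis31} serving only to guarantee that the isotope is a solution. No discrepancies to report.
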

	\begin{proof}
		Let us denote by  $(X,L,\mathbf{R})$ the 
		$(\pi_1,\pi_2)$-isotope $(X,\sigma,\tau)$.
		It is $2$-reductive if and only if
		\begin{align*}
			&\sigma_{\sigma_x\pi_1(y)}\pi_1= L_{L_x(y)}=L_y=\sigma_y\pi_1, \\
			& \tau_{\tau_x\pi_2(y)}\pi_2=\mathbf{R}_{\mathbf{R}_x(y)}=\mathbf{R}_y=\tau_{y}\pi_2, \\
			& \sigma_{\tau_x\pi_2(y)}\pi_1=L_{\mathbf{R}_x(y)}=L_y=\sigma_y\pi_1, \\
			& \tau_{\sigma_x\pi_1(y)}\pi_2=\mathbf{R}_{L_x(y)}=\mathbf{R}_y=\tau_y\pi_2,
		\end{align*}
		 for each $x,y\in X$.
	\end{proof}
	
When we are making isotopes of a $2$-reductive solution $(X,L,\mathbf{R})$, the conditions \eqref{gis1}--\eqref{gis2} clearly reduce to the following ones:
\begin{align}
&L_x\pi_1 L_y=L_{\pi_1(y)}\pi_1L_{\pi_2(x)},\label{is1}\\
&\mathbf{R}_x\pi_2 \mathbf{R}_y=\mathbf{R}_{\pi_2(y)}\pi_2\mathbf{R}_{\pi_1(x)},\label{is3}\\
&\mathbf{R}_{\pi_1(y)}\pi_2 L_x\pi_1=L_{\pi_2(x)}\pi_1 \mathbf{R}_y\pi_2,\label{is2}
\end{align}
for $x,y\in X$.

\begin{example}
Let $(X,L,\mathbf{R})$ be a $2$-reductive solution and let $\varphi,\psi\in \langle L_x,\mathbf{R}_y\mid x,y\in X\rangle$. By \cite[Proposition 3.6(iii)]{JP23a}, the group $\langle L_x,\mathbf{R}_y\mid x,y\in X\rangle$ is commutative, hence by $2$-reductivity we obtain
\begin{align*}
&L_x\varphi L_y=L_y\varphi L_x =L_{\varphi(y)}\varphi L_{\psi(x)},\\
&\mathbf{R}_x\psi \mathbf{R}_y=
\mathbf{R}_y\psi \mathbf{R}_x=
\mathbf{R}_{\psi(y)}\psi\mathbf{R}_{\varphi(x)},\\
&\mathbf{R}_{\varphi(y)}\psi L_x\varphi=
L_x\varphi\mathbf{R}_{\varphi(y)}\psi=
L_{\psi(x)}\varphi \mathbf{R}_y\psi,
\end{align*}
for each $x,y\in X$.
Therefore, every pair $(\varphi,\psi)$ of bijections 
$\varphi,\psi\in \langle L_x,\mathbf{R}_y\mid x,y\in X\rangle$,  satisfies Conditions \eqref{is1}--\eqref{is2}.
\end{example}

The previous example was a special case of the following lemma,
since $L_x$ and $\mathbf{R}_x$ are automorphisms of a~2-reductive solution.
	
\begin{lemma}\label{exm:piinv}
Let $(X,L,\mathbf{R})$ be a $2$-reductive solution and let $\pi_1$ and $\pi_2$ be two commuting automorphisms of $(X,L,\mathbf{R})$. 
Then the following conditions are equivalent:
\begin{itemize}
	\item [(i)] the bijections
	$\pi_1$ and $\pi_2$ satisfy Conditions \eqref{is1}--\eqref{is2},
	\item [(ii)] $x\approx \pi_1\pi_2(x)$, for each $x\in X$,
	\item [(iii)] the automorphism $\pi_1\pi_2$ commutes with
	the set $\{L_x,\mathbf{R}_x\mid x\in X\} $,
	\item [(iv)] for any automorphism $\varphi$ of $(X,L,\mathbf{R})$, the automorphism $\varphi^{-1}\pi_1\pi_2\varphi$ commutes with
	the set $\{L_x,\mathbf{R}_x\mid x\in X\} $.
\end{itemize}
\end{lemma}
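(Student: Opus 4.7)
The plan is to establish the chain (i)$\Leftrightarrow$(ii)$\Leftrightarrow$(iii)$\Leftrightarrow$(iv), exploiting two key facts about 2-reductive solutions: first, by \cite[Proposition 3.6(iii)]{JP23a} the group $\langle L_x,\mathbf{R}_y\mid x,y\in X\rangle$ is abelian, and second, every $L_x$ and $\mathbf{R}_x$ is an automorphism of $(X,L,\mathbf{R})$. I would begin with the two cheap equivalences (ii)$\Leftrightarrow$(iii)$\Leftrightarrow$(iv) and then reduce (i) to (ii).

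For (ii)$\Leftrightarrow$(iii), I would use the automorphism identity $\pi_1\pi_2\, L_x=L_{\pi_1\pi_2(x)}\,\pi_1\pi_2$, valid because $\pi_1\pi_2$ is an automorphism. Thus $\pi_1\pi_2$ commutes with $L_x$ iff $L_{\pi_1\pi_2(x)}=L_x$; the analogous computation for $\mathbf{R}_x$ completes the equivalence, since by definition $x\approx \pi_1\pi_2(x)$ means precisely $L_x=L_{\pi_1\pi_2(x)}$ and $\mathbf{R}_x=\mathbf{R}_{\pi_1\pi_2(x)}$. For (iii)$\Leftrightarrow$(iv), one direction is trivial (take $\varphi=\id$); for the converse, assuming (iii), conjugation gives $\varphi^{-1}\pi_1\pi_2\varphi\, L_x=\varphi^{-1}\pi_1\pi_2\, L_{\varphi(x)}\varphi=\varphi^{-1}L_{\varphi(x)}\pi_1\pi_2\varphi=L_x\,\varphi^{-1}\pi_1\pi_2\varphi$, and similarly for $\mathbf{R}_x$.

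The substantive step is (i)$\Leftrightarrow$(ii). I would take each of \eqref{is1}--\eqref{is2} and push $\pi_1,\pi_2$ to the right using the automorphism identities $\pi_i L_x=L_{\pi_i(x)}\pi_i$ and $\pi_i\mathbf{R}_x=\mathbf{R}_{\pi_i(x)}\pi_i$. For \eqref{is1}, this turns the left-hand side into $L_x L_{\pi_1(y)}\pi_1$ and the right-hand side into $L_{\pi_1(y)}L_{\pi_1\pi_2(x)}\pi_1$; commutativity of the $L$'s (from the abelianness of $\langle L_x,\mathbf{R}_y\rangle$) lets me cancel $L_{\pi_1(y)}$ on the left, so \eqref{is1} collapses to $L_x=L_{\pi_1\pi_2(x)}$ for all $x$. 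An identical manipulation reduces \eqref{is3} to $\mathbf{R}_x=\mathbf{R}_{\pi_1\pi_2(x)}$. Together these give exactly (ii). Conversely, (ii) immediately recovers \eqref{is1} and \eqref{is3} by reversing the computation. Finally, \eqref{is2} is actually automatic under our hypotheses: pushing $\pi_1,\pi_2$ to the right on both sides yields $\mathbf{R}_{\pi_1(y)}L_{\pi_2(x)}\pi_2\pi_1$ on the left and $L_{\pi_2(x)}\mathbf{R}_{\pi_1(y)}\pi_1\pi_2$ on the right, and these coincide because the $L$'s and $\mathbf{R}$'s all commute and $\pi_1\pi_2=\pi_2\pi_1$ by assumption.

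None of the steps is a serious obstacle; the only point requiring care is keeping track of which automorphism identity to invoke at each position, and remembering that the automorphism property and the abelianness of $\langle L_x,\mathbf{R}_y\rangle$ are exactly the two features of 2-reductivity that make the isotopy conditions degenerate so pleasantly. If anything, the mild conceptual point is recognizing that \eqref{is2} imposes no new constraint once $\pi_1$ and $\pi_2$ are assumed to be commuting automorphisms, so the entire content of (i) sits in \eqref{is1} and \eqref{is3}.
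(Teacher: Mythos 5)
Your proof is correct and takes essentially the same route as the paper's: \eqref{is1} and \eqref{is3} are reduced to $L_x=L_{\pi_1\pi_2(x)}$ and $\mathbf{R}_x=\mathbf{R}_{\pi_1\pi_2(x)}$ by pushing the automorphisms to the right and using commutativity of $\langle L_x,\mathbf{R}_y\mid x,y\in X\rangle$, \eqref{is2} is observed to hold automatically for commuting automorphisms, and (ii)$\Leftrightarrow$(iii)$\Leftrightarrow$(iv) follow by the same conjugation computations. No gaps.
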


\begin{proof}
At first note that, for any two commuting automorphisms $\pi_1$ and $\pi_2$ of $(X,L,\mathbf{R})$, Condition \eqref{is2} is always satisfied. Indeed, by commutativity of the group $\langle L_x,\mathbf{R}_y\mid x,y\in X\rangle$, for $x,y\in X$ we have:
\begin{align*}
&\mathbf{R}_{\pi_1(y)}L_{\pi_2 (x)}=L_{\pi_2 (x)}\mathbf{R}_{\pi_1(y)} \quad \Rightarrow\quad \mathbf{R}_{\pi_1(y)}L_{\pi_2 (x)}\pi_2 \pi_1=L_{\pi_2 (x)}\mathbf{R}_{\pi_1(y)}\pi_1 \pi_2 \quad \stackrel{\eqref{isomorphism}}{\Rightarrow}\\
& \mathbf{R}_{\pi_1(y)}\pi_2 L_x\pi_1=L_{\pi_2 (x)}\pi_1\mathbf{R}_{y}\pi_2.
\end{align*}
(i)$\Leftrightarrow$(ii)
  Due to commutativity of $\langle L_x,\mathbf{R}_y\mid x,y\in X\rangle$, we obtain, for $x,y\in X$:
  \begin{multline*}
  	L_x\pi_1 L_y\stackrel{\eqref{is1}}=L_{\pi_1(y)}\pi_1L_{\pi_2(x)} \quad \stackrel{\eqref{isomorphism}}{\Leftrightarrow}\quad
  	L_{\pi_1(y)}L_x\pi_1=L_{\pi_1(y)}L_{\pi_1\pi_2(x)}\pi_1
  	\quad\Leftrightarrow\quad\\
  	L_x=L_{\pi_1\pi_2(x)}
  	\quad\Leftrightarrow\quad
  	x\sim \pi_1\pi_2(x).
  \end{multline*}
The proof that $\eqref{is3}$ is equivalent to the condition $ x\backsim \pi_1\pi_2(x)$, is analogous.

\noindent
(ii)$\Leftrightarrow$(iii) For $x\in X$ we have
$$L_x\stackrel{\mathrm{(ii)}}{=}L_{\pi_1\pi_2(x)}
\stackrel{\eqref{isomorphism}}{=}\pi_1\pi_2 L_x(\pi_1\pi_2)^{-1}
\stackrel{\mathrm{(iii)}}{=}L_x$$
and analogously for $\mathbf{R}_x$.
\\
\noindent
(iv)$\Rightarrow$(iii) is evident. 
\\
\noindent
(iii)$\Rightarrow$(iv) For each $x\in X$,
\[\varphi^{-1}\pi_1\pi_2\varphi L_x
\stackrel{\eqref{isomorphism}}{=}
\varphi^{-1}\pi_1\pi_2 L_{\varphi(x)}\varphi =
\varphi^{-1} L_{\varphi(x)}\pi_1\pi_2\varphi 
\stackrel{\eqref{isomorphism}}{=}
L_x\varphi^{-1}\pi_1\pi_2\varphi
\]
and analogously for $\mathbf{R}_x$.

\end{proof}

In particular, the pair ($\pi$,$\pi^{-1})$ satisfies Conditions \eqref{is1}--\eqref{is2},
for any automorphism $\pi$ of a 2-reductive solution $(X,L,\mathbf{R})$.
\vskip 2mm
Every isotope of a  $2$-reductive solution is a $2$-permutational one.

\begin{lemma}\label{lm:iso2red}
Let $(X,L,\mathbf{R})$ be a $2$-reductive solution and $\pi_1$ and $\pi_2$ be two bijections on the set $X$ satisfying \eqref{is1}--\eqref{is2}. Then the $(\pi_1,\pi_2)$-isotope of $(X,L,\mathbf{R})$ is a $2$-permutational solution.
\end{lemma}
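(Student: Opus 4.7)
The plan is to verify the four $2$-permutational identities \eqref{eq:per1}--\eqref{eq:per4} for the isotope $(X,\mu,\nu)$ with $\mu_x=L_x\pi_1$ and $\nu_x=\mathbf{R}_x\pi_2$ by a direct computation that reduces each expression to something independent of $x$. Crucially, I would not need the isotopy conditions \eqref{is1}--\eqref{is2} at all for this implication; those are only there to ensure that $(X,\mu,\nu)$ really is a solution. The $2$-permutational property follows from $2$-reductivity of the underlying solution alone.

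First I would expand each inner permutation in terms of $L$, $\mathbf{R}$, $\pi_1$, $\pi_2$. For instance, $\mu_{\mu_x(z)} = L_{L_x\pi_1(z)}\,\pi_1$, and one can immediately apply \eqref{eq:red1}, $L_{L_x(w)}=L_w$, with $w = \pi_1(z)$, to conclude $\mu_{\mu_x(z)} = L_{\pi_1(z)}\,\pi_1$, which is visibly independent of $x$. In exactly the same way, \eqref{eq:red2} yields $\nu_{\nu_x(z)} = \mathbf{R}_{\pi_2(z)}\,\pi_2$, \eqref{eq:red3} yields $\mu_{\nu_x(z)} = L_{\pi_2(z)}\,\pi_1$, and \eqref{eq:red4} yields $\nu_{\mu_x(z)} = \mathbf{R}_{\pi_1(z)}\,\pi_2$. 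Since in every case the resulting permutation depends only on $z$, the corresponding $2$-permutational identity holds trivially.

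There is really no hard step here; the only thing to be careful about is matching which of the four reductivity equations applies to which of the four $2$-permutational equations, so I would write the correspondence out explicitly. The slightly counterintuitive feature is that the hypothesis on $\pi_1$ and $\pi_2$ plays no role in this particular verification, but that is consistent with Lemma \ref{lem:2-red_iso}, which shows that to move from $2$-permutational to actually $2$-reductive for the isotope one needs the stronger conditions $\sigma_{\sigma_x\pi_1(y)}=\sigma_y$ and so on, i.e. something beyond mere $2$-reductivity of $(X,L,\mathbf{R})$.
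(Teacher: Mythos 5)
Your proposal is correct and follows essentially the same route as the paper: expand $\mu_{\mu_x(z)}=L_{L_x\pi_1(z)}\pi_1$ and its three analogues, apply the appropriate one of \eqref{eq:red1}--\eqref{eq:red4} to the inner index, and observe that the result no longer depends on $x$. Your observation that Conditions \eqref{is1}--\eqref{is2} serve only to guarantee that the isotope is a solution (and play no role in the reduction itself) is accurate and matches how the paper's proof is structured.
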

\begin{proof}
Let $(X,\sigma,\tau)$ be $(\pi_1,\pi_2)$-isotope of $(X,L,\mathbf{R})$. For $x,y,z\in X$
\begin{align*}
&\sigma_{\sigma_x(z)}=  L_{L_x\pi_1(z)}\pi_1=L_{\pi_1(z)}\pi_1=L_{L_y\pi_1(z)}\pi_1=\sigma_{\sigma_y(z)} \quad {\rm and}\\
&\sigma_{\tau_x(z)}=  L_{\mathbf{R}_x\pi_2(z)}\pi_1=L_{\pi_2(z)}\pi_1=L_{\mathbf{R}_y\pi_2(z)}\pi_1=\sigma_{\tau_y(z)}.
\end{align*}
Similarly, we can check that also 
$\tau_{\tau_x(z)}=\tau_{\tau_y(z)}$ and $ \tau_{\sigma_x(z)}=\tau_{\sigma_y(z)}$, which shows that $(X,\sigma,\tau)$ is $2$-permutational.
\end{proof}


The previous result can be reverted, in the sense that every $2$-permutational solution admits a $2$-reductive isotope.

\begin{proposition}\label{prop:2p_iso1}
Let $(X,\sigma,\tau)$ be a $2$-permutational solution and let $e\in X$. 
Then, the pair $(\sigma_e^{-1},\tau_e^{-1})$
satisfies Conditions \eqref{gis1}, \eqref{gis3} and \eqref{gis31}
and the $(\sigma_e^{-1},\tau_e^{-1})$-isotope is $2$-reductive.
\end{proposition}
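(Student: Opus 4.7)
The plan is to set $\pi_1=\sigma_e^{-1}$ and $\pi_2=\tau_e^{-1}$ and first exploit $2$-permutationality in its purest form: substituting $e$ into one of the two free parameters in \eqref{eq:per1}--\eqref{eq:per4} and using $\sigma_e\sigma_e^{-1}=\tau_e\tau_e^{-1}=\mathrm{id}$ gives, for every $x,y\in X$, the four simplifications
\begin{equation*}
\sigma_{\sigma_x\sigma_e^{-1}(y)}=\sigma_y,\quad \sigma_{\tau_x\tau_e^{-1}(y)}=\sigma_y,\quad \tau_{\sigma_x\sigma_e^{-1}(y)}=\tau_y,\quad \tau_{\tau_x\tau_e^{-1}(y)}=\tau_y.
\end{equation*}
These are precisely the hypotheses of Lemma~\ref{lem:2-red_iso}, so the moment we know the pair $(\sigma_e^{-1},\tau_e^{-1})$ defines a genuine isotope, its $2$-reductivity follows immediately.

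The second step is to verify \eqref{gis1}, \eqref{gis3}, \eqref{gis31} themselves. The four simplifications above already collapse the outer subscripts on both sides of each condition. For \eqref{gis1}, after replacing $\sigma_{\sigma_x\sigma_e^{-1}(y)}$ by $\sigma_y$, the right-hand side reads $\sigma_y\sigma_e^{-1}\sigma_{\tau_y\tau_e^{-1}(x)}$, and one more application of \eqref{eq:per3} with free variable equal to $e$ gives $\sigma_{\tau_y\tau_e^{-1}(x)}=\sigma_x$. Thus \eqref{gis1} is equivalent to $\sigma_x\sigma_e^{-1}\sigma_y=\sigma_y\sigma_e^{-1}\sigma_x$, which is \eqref{eq:com1} specialized at $y\mapsto e$. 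By the same pattern, using \eqref{eq:per4} in place of \eqref{eq:per3}, condition \eqref{gis3} collapses to \eqref{eq:com12} at $y\mapsto e$. Finally, simplifying both the leftmost subscript (via the third identity above) and the rightmost subscript (via the second) in \eqref{gis31} reduces it to $\tau_y\tau_e^{-1}\sigma_x\sigma_e^{-1}=\sigma_x\sigma_e^{-1}\tau_y\tau_e^{-1}$, which is exactly \eqref{eq:com2} at $y\mapsto e$.

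Combining the two steps: conditions \eqref{gis1}, \eqref{gis3}, \eqref{gis31} hold, so $(\sigma_e^{-1},\tau_e^{-1})$ truly defines an isotope, and Lemma~\ref{lem:2-red_iso} then turns the four subscript simplifications into $2$-reductivity of that isotope. I do not foresee any real obstacle; the whole argument is a chain of substitutions into identities already available in Section~\ref{sec:2per}. The only mildly delicate point is matching each braid-type condition with the correct member of the trio \eqref{eq:com1}, \eqref{eq:com12}, \eqref{eq:com2}, but this pairing is dictated by the placement of the $\sigma$'s and $\tau$'s on the two sides.
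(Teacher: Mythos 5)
Your proposal is correct and follows essentially the same route as the paper: both arguments reduce the outer subscripts via \eqref{eq:per1}--\eqref{eq:per4} specialized at $e$, identify the residual identities for \eqref{gis1}, \eqref{gis3}, \eqref{gis31} with \eqref{eq:com1}, \eqref{eq:com12}, \eqref{eq:com2} respectively, and then invoke Lemma~\ref{lem:2-red_iso} for $2$-reductivity. The only difference is cosmetic (you simplify both sides first and then match the commutation identity, while the paper starts from the commutation identity and rewrites).
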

\begin{proof}
Let $x,y,z,a,b\in X$. 
\\
(1) At first note that, for the $2$-permutational solution $(X,\sigma,\tau)$, we have: 
\begin{align*}
&\sigma_x\sigma_e^{-1}\sigma_y\stackrel{\eqref{eq:com1}}=\sigma_y\sigma_e^{-1}\sigma_x=
\sigma_{\sigma_e\sigma_e^{-1}(y)}\sigma_e^{-1}\sigma_{\tau_e\tau_e^{-1}(x)}\stackrel{\eqref{eq:per1},\eqref{eq:per3}}=
\sigma_{\sigma_x\sigma_e^{-1}(y)}\sigma_e^{-1}\sigma_{\tau_y\tau_e^{-1}(x)},
\end{align*}
and similarly $\tau_x\tau_e^{-1}\tau_y=\tau_{\tau_x\tau_e^{-1}(y)}\tau_e^{-1}\tau_{\sigma_y\sigma_e^{-1}(x)}$. Moreover,
\begin{multline*}
\tau_{\sigma_a\sigma_e^{-1}(z)}\tau_e^{-1}\sigma_x\sigma_e^{-1}
\stackrel{\eqref{eq:per1}}=
\tau_{\sigma_e\sigma_e^{-1}(z)}\tau_e^{-1}\sigma_x\sigma_e^{-1}=
\tau_{z}\tau_e^{-1}\sigma_x\sigma_e^{-1}
\stackrel{\eqref{eq:com2}}=\\
\sigma_x\sigma_e^{-1}\tau_{z}\tau_e^{-1}=
\sigma_{\tau_e\tau_e^{-1}(x)}\sigma_e^{-1}\tau_{z}\tau_e^{-1}
\stackrel{\eqref{eq:per2}}=
\sigma_{\tau_b\tau_e^{-1}(x)}\sigma_e^{-1}\tau_{z}\tau_e^{-1}
\end{multline*}
which shows that the pair $(\sigma_e^{-1},\tau_e^{-1})$ satisfies Conditions \eqref{gis1}, \eqref{gis3} and \eqref{gis31}. 
	Now we use Lemma~\ref{lem:2-red_iso}:
\begin{align*}
	&\sigma_{\sigma_x\sigma_e^{-1}(y)}\stackrel{\eqref{eq:per1}}=\sigma_{\sigma_e\sigma_e^{-1}(y)}=\sigma_y=\sigma_{\tau_e\tau_e^{-1}(y)}\stackrel{\eqref{eq:per3}}=
	\sigma_{\tau_x\tau_e^{-1}(y)},\\
	&\tau_{\sigma_x\sigma_e^{-1}(y)}
	\stackrel{\eqref{eq:per4}}=\tau_{\sigma_e\sigma_e^{-1}(y)}=\tau_y=\tau_{\tau_e\tau_e^{-1}(y)}\stackrel{\eqref{eq:per2}}=
	\tau_{\tau_x\tau_e^{-1}(y)},
\end{align*}
and the isotope is $2$-reductive.
\end{proof}

Every $2$-permutational solution is isotopic to a $2$-reductive
one and every $2$-reductive solution can be constructed using a
combinatorial condstruction described in~\cite{JP23a}.
This gives us a theoretic means of constructing all $2$-permutational solutions of a given size: we construct all the $2$-reductive solutions
and then we construct all their isotopes. However, an isotopy class often contains several non-isomorphic $2$-reductive solutions
as we have already observed in~\cite{JPZ20a}
. To obtain an effective isomorphism criterion, we have to look at different isotopes than 
those described in Proposition~\ref{prop:2p_iso1}.

%

\section{Isotopes of square free $2$-reductive solutions}\label{sec:isosqfree}

A main drawback of the previous section is: given two isotopes of two $2$-reductive solutions, we lack an effective criterion to determine whether they are isomorphic. The same problem occured already in~\cite{JPZ20a} and it was responded by Rump in~\cite{Rump22},
	where he proved that every involutive solution of multipermutation level~2
	is isotopic to an involutive square-free 2-reductive solution. Rump's
	approach is different than the ours, he uses so-called cycle sets. Nevertheless, when
	one analyzes the result, it turns out that the permutation used
	to construct the isotope is actually the diagonal permutation.

\begin{de}[\cite{ESS}]
	Let $(X,\sigma,\tau)$ be a solution. We define
	\begin{equation}
		U\colon X\to X,\ U(x)= \sigma^{-1}_x(x) \quad\text{ and }\quad T\colon X\to X,\ T(x)= \tau^{-1}_x(x).
	\end{equation}
\end{de}

	 Etingof,  Schedler and Soloviev observed in \cite[Proposition 2,2]{ESS} that, for involutive solutions, the mappings  are bijections on $X$. The same is also true for non-involutive solutions (see \cite{JP24}),
	 namely we have 
	 \begin{equation}
	 	U^{-1}(x)= \sigma_{\tau_x^{-1}(x)}(x)
	 	\quad\text{ and }\quad T^{-1}(x)= \tau_{\sigma_x^{-1}(x)}(x)
	 	.
	 \end{equation}
	The proof in~\cite{JP24} is quite technical and therefore we present a shorter proof for the $2$-permutational case here.
	Indeed:
	\begin{align*}
		&U^{-1}U(x)=U^{-1}(\sigma^{-1}_x(x))=\sigma_{\tau^{-1}_{\sigma^{-1}_x(x)}(\sigma^{-1}_x(x))}\sigma^{-1}_x(x)\stackrel{\eqref{eq:per1}}=
		\sigma_{\tau_{x}^{-1}(\sigma^{-1}_x(x))}\sigma^{-1}_x(x)\stackrel{\eqref{eq:tauinverse1}}=\\
		&\sigma_{\sigma_{x}(\sigma^{-1}_x(x))}\sigma^{-1}_x(x)=\sigma_x\sigma^{-1}_x(x)=x,
	\end{align*}
	and 
	\begin{align*}
		UU^{-1}(x)
		=U(\sigma_{\tau_x^{-1}(x)}(x))
		\stackrel{\eqref{eq:tauinverse1}}
		=U(\sigma_{\sigma_x(x)}(x))=\sigma^{-1}_{\sigma_{\sigma_x(x)}(x)}\sigma_{\sigma_x(x)}(x)\stackrel{\eqref{eq:per1}}=\sigma^{-1}_{\sigma_x(x)}\sigma_{\sigma_x(x)}(x)=x,
	\end{align*}
	and similarly for $T$.
	Moreover, the mappings $U$ and $T$ commute. Indeed, for $x\in X$,
	\begin{multline*}
		UT(x)=U\tau_x^{-1}(x)=\sigma^{-1}_{\tau^{-1}_x(x)}\tau_x^{-1}(x)\stackrel{\eqref{eq:tauinverse4}}=
		\tau^{-1}_{\sigma_x^{-1}(x)}\sigma^{-1}_{\sigma^{-1}_x\tau^{-1}_x(x)}(x)\stackrel{\eqref{eq:tauinverse3}}=\\
		\tau^{-1}_{\sigma_x^{-1}(x)}\sigma^{-1}_{x}(x)=T\sigma_x^{-1}(x)=TU(x).
	\end{multline*}

	In the case of $2$-permutational solutions, the diagonal permutations are actually automorphisms.
	
	\begin{lemma}\label{lem:UT_auto1}
		The permutations $U$ and~$T$ are automorphisms of  a $2$-permutational solution $(X,\sigma,\tau)$.
	\end{lemma}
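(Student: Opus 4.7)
Plan: Since $U$ and $T$ are already known to be bijections of $X$, it is enough to verify the two compatibility identities $U\sigma_x=\sigma_{U(x)}U$ and $U\tau_x=\tau_{U(x)}U$; the corresponding identities for $T$ then follow by the symmetry $\sigma\leftrightarrow\tau$, $U\leftrightarrow T$ of the $2$-permutational setting (note that under this swap, \eqref{birack:1} and \eqref{birack:3} trade places, as do \eqref{eq:per1} and \eqref{eq:per2}, and \eqref{eq:per3} and \eqref{eq:per4}).

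For $U\sigma_x=\sigma_{U(x)}U$ I evaluate both sides at $y$. The right side is $\sigma_{\sigma_x^{-1}(x)}\sigma_y^{-1}(y)$; \eqref{eq:sigmasigmainverse} rewrites the subscript as $\sigma_{\tau_y(x)}$, and \eqref{birack:1} transforms $\sigma_{\tau_y(x)}\sigma_y^{-1}$ into $\sigma_{\sigma_x(y)}^{-1}\sigma_x$, producing $\sigma_{\sigma_x(y)}^{-1}\sigma_x(y)=U\sigma_x(y)$.

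The identity $U\tau_x=\tau_{U(x)}U$ is deeper. I reduce it to the auxiliary permutation identity
\[
\sigma_y^{-1}\tau_{\sigma_y(y)}^{-1}\tau_x \;=\; \tau_x\tau_{\sigma_y(y)}^{-1}\sigma_y^{-1}.
\]
To establish this I first specialise \eqref{birack2per} with $a=z=y$ and $b=y$, obtaining $\tau_{\sigma_y(y)}\sigma_y=\sigma_{\tau_y(y)}\tau_y$, whose inverse form reads $\sigma_y^{-1}\tau_{\sigma_y(y)}^{-1}=\tau_y^{-1}\sigma_{\tau_y(y)}^{-1}$. Independently, \eqref{eq:41} applied to $\sigma_{\tau_y(y)}^{-1}\tau_x$ and simplified via \eqref{eq:tauinverse5} (to turn $\tau_{\sigma_y^{-1}(x)}$ into $\tau_{\tau_y(x)}$) and \eqref{eq:tauinverse3} (to turn $\sigma_{\sigma_y\tau_y(y)}$ into $\sigma_y$) yields $\sigma_{\tau_y(y)}^{-1}\tau_x=\tau_{\tau_y(x)}\sigma_y^{-1}$. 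Concatenating these two and using \eqref{birack:3} together with \eqref{eq:per4} to rewrite $\tau_y^{-1}\tau_{\tau_y(x)}$ as $\tau_x\tau_{\sigma_y(y)}^{-1}$ then produces the displayed identity.

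With the auxiliary identity in hand, the conclusion is immediate. By \eqref{eq:per3}, $U\tau_x(y)=\sigma_{\tau_y(y)}^{-1}\tau_x(y)$; writing $\sigma_{\tau_y(y)}^{-1}=\tau_y\sigma_y^{-1}\tau_{\sigma_y(y)}^{-1}$ (from the specialised braid relation) and invoking the auxiliary identity brings $U\tau_x(y)$ to the form $\tau_y\tau_x\tau_{\sigma_y(y)}^{-1}\sigma_y^{-1}(y)$. On the other side, $\tau_{U(x)}U(y)=\tau_{\sigma_x^{-1}(x)}\sigma_y^{-1}(y)$ reduces via \eqref{eq:tauinverse5} to $\tau_{\tau_y(x)}\sigma_y^{-1}(y)$, and \eqref{birack:3} with \eqref{eq:per4} expresses $\tau_{\tau_y(x)}$ as $\tau_y\tau_x\tau_{\sigma_y(y)}^{-1}$. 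The two expressions agree, which completes the verification. The main obstacle is locating the auxiliary permutation identity; once it is in place, both halves of the target equation collapse to the same permutation applied to $y$.
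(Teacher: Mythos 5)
Your proof is correct and follows essentially the same route as the paper: the verification of $U\sigma_x=\sigma_{U(x)}U$ is the paper's computation via \eqref{birack:1} and \eqref{eq:sigmasigmainverse} read in reverse, and for $U\tau_x=\tau_{U(x)}U$ your intermediate identity $\sigma_{\tau_y(y)}^{-1}\tau_x=\tau_{\tau_y(x)}\sigma_y^{-1}$ (obtained from \eqref{eq:41}, \eqref{eq:tauinverse3} and \eqref{eq:tauinverse5}) coincides, via \eqref{eq:per3}, with the paper's two-line argument and yields the claim directly upon evaluation at $y$. The detour through the auxiliary commutation identity and the specialised braid relation is therefore correct but redundant.
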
	
	\begin{proof}
		We check~\eqref{isomorphism}. For $x,y\in X$:
		\begin{align*}
			U\sigma_x(y)&=\sigma^{-1}_{\sigma_x(y)}\sigma_x(y)
			\stackrel{\eqref{birack:1}}{=}
			\sigma_{\tau_y(x)}\sigma_y^{-1}(y)
			\stackrel{\eqref{eq:sigmasigmainverse}}{=}
			\sigma_{\sigma^{-1}_x(x)}\sigma_y^{-1}(y)
			=\sigma_{U(x)}U(y),\\
			U\tau_x(y)&=\sigma^{-1}_{\tau_x(y)}\tau_x(y)
			\stackrel{\eqref{eq:41}}=
			\tau_{\sigma_x^{-1}(x)}\sigma^{-1}_{\sigma_x\tau_x(y)}(y)
			\stackrel{\eqref{eq:tauinverse3}}{=}
			\tau_{\sigma_x^{-1}(x)}\sigma^{-1}_{y}(y)
			=\tau_{U(x)}U(y).
			\end{align*}
		The proof for the permutation~$T$ is analogous.
	\end{proof}

\begin{example}\label{exm:irretractable}
			In general, the permutations $U$ and $T$ are not automorphisms.
			Let $(X,\sigma,\tau)$ be the following irretractable solution on $\{0,1,2,3,4\}$:
			\begin{align*}
				\sigma_0&=\tau_1=\tau_4=\mathrm{id}, &
				\sigma_1&=\sigma_3=(0,2,3),\\
				\tau_0&=\tau_2=\tau_3=(1,4)(2,3), &
				\sigma_2&=\sigma_4=(0,3,2).
			\end{align*}
			Then $U=(2,3)$ and $U\sigma_1(0)=3$ whereas $\sigma_{U(1)}U(0)=2$ and therefore $U$ is not an automorphism of this solution.
		\end{example}

	Rump's result can be straightforwardly generalized.
		
\begin{proposition}\label{prop:2p_iso2}
	Let $(X,\sigma,\tau)$ be a $2$-permutational solution
	Then, the pair $(U,T)$
		satisfies Conditions \eqref{gis1}, \eqref{gis3} and \eqref{gis31}
		and the $(U,T)$-isotope of $(X,\sigma,\tau)$ is square-free $2$-reductive.
\end{proposition}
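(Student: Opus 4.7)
The plan is to verify, in order: (a) the square-free property of the isotope $(X,\mu,\nu)$ with $\mu_x=\sigma_xU$ and $\nu_x=\tau_xT$; (b) the four subscript identities required to apply Lemma~\ref{lem:2-red_iso}; and (c) the three isotope axioms \eqref{gis1}, \eqref{gis3} and \eqref{gis31}. Step~(a) is instant: $\mu_x(x)=\sigma_x\sigma_x^{-1}(x)=x$ and dually for $\nu_x$. Step~(b) is a one-line calculation for each identity; for instance $\sigma_{\sigma_xU(y)}\stackrel{\eqref{eq:per1}}{=}\sigma_{\sigma_yU(y)}=\sigma_y$ because $\sigma_yU(y)=y$, and the three analogues $\sigma_{\tau_xT(y)}=\sigma_y$, $\tau_{\sigma_xU(y)}=\tau_y$, $\tau_{\tau_xT(y)}=\tau_y$ follow identically from \eqref{eq:per3}, \eqref{eq:per4} and \eqref{eq:per2}.

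Step~(c) is the substantive part. For \eqref{gis1}, the subscript identities from (b) collapse the right-hand side to $\sigma_yU\sigma_x$, so the task reduces to $\sigma_xU\sigma_y=\sigma_yU\sigma_x$. Since $U$ is an automorphism (Lemma~\ref{lem:UT_auto1}), pushing $U$ through gives the equivalent form $\sigma_x\sigma_{U(y)}=\sigma_y\sigma_{U(x)}$; one application of \eqref{birack:1} to the left side, followed by cancellation of the common factor $\sigma_{\sigma_xU(y)}=\sigma_y$ already computed in (b), further reduces the goal to $\sigma_{\tau_{U(y)}(x)}=\sigma_{U(x)}$, which follows by combining \eqref{eq:per3} with \eqref{eq:sigmasigmainverse} (rewriting $\sigma_{U(x)}=\sigma_{\sigma_x^{-1}(x)}=\sigma_{\tau_z(x)}$ for any~$z$). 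Condition \eqref{gis3} is strictly dual, using \eqref{birack:3}, \eqref{eq:per4} and \eqref{eq:tautauinverse}. For the mixed axiom \eqref{gis31}, the same collapses reduce it to $\tau_yT\sigma_xU=\sigma_xU\tau_yT$; since $U$ and $T$ commute and are both automorphisms, pushing them to the right yields $\tau_y\sigma_{T(x)}=\sigma_x\tau_{U(y)}$, and this follows from \eqref{eq:tauinverse4} once one identifies $\tau_{\sigma_aU(y)}=\tau_y$ (via \eqref{eq:per4}) and $\sigma_{\sigma_b(x)}=\sigma_{T(x)}$ (via \eqref{eq:per1} and \eqref{eq:tauinverse1}).

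The main obstacle is strategic rather than computational: among the many identities derived in Section~\ref{sec:2per}, one must pick the right one at each step. The guiding principle is that \eqref{eq:sigmasigmainverse} and \eqref{eq:tautauinverse} recognise $\sigma_{U(\cdot)}$ and $\tau_{T(\cdot)}$ as $\sigma_{\tau_z(\cdot)}$ and $\tau_{\sigma_z(\cdot)}$ respectively, after which \eqref{eq:per3} and \eqref{eq:per4} render the outer subscripts immaterial and the remaining terms match following a single use of a braid relation or of \eqref{eq:tauinverse4}. Once (a), (b) and (c) are in place, Lemma~\ref{lem:2-red_iso} yields the $2$-reductivity of the $(U,T)$-isotope, completing the proof.
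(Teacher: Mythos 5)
Your proof is correct, and its overall skeleton (verify square-freeness, verify the four subscript identities, verify \eqref{gis1}, \eqref{gis3}, \eqref{gis31}, then invoke Lemma~\ref{lem:2-red_iso}) matches the paper's. The difference lies in how the three isotope conditions are verified. The paper works pointwise: it expands $U\sigma_y(z)=\sigma^{-1}_{\sigma_y(z)}\sigma_y(z)$ and applies the displacement-group commutativity identities \eqref{eq:com1} and \eqref{eq:com2} together with \eqref{eq:per1}--\eqref{eq:per4} to rearrange the expression directly into the required form. You instead first collapse both sides using the subscript identities (so that, e.g., \eqref{gis1} becomes the commutation $\sigma_xU\sigma_y=\sigma_yU\sigma_x$), and then discharge these commutations by pushing $U$ and $T$ through via the automorphism property of Lemma~\ref{lem:UT_auto1} and the relation $UT=TU$, finishing with \eqref{birack:1}, \eqref{eq:sigmasigmainverse} and \eqref{eq:tauinverse4}. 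In effect you re-derive the needed special cases of \eqref{eq:com1} and \eqref{eq:com2} from the automorphism property rather than quoting them; this buys a more transparent structural explanation (the isotope conditions hold because the diagonals are commuting automorphisms whose product is ``invisible'' to the subscripts, in the spirit of Lemma~\ref{exm:piinv}), at the cost of leaning on Lemma~\ref{lem:UT_auto1}, which the paper's computation does not need. Both routes are sound and of comparable length.
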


\begin{proof}
Let $x,y,z\in X$. Then, for Condition~\eqref{gis1}, we have:
\begin{align*}
	&\sigma_x U\sigma_y(z)=\sigma_x\sigma^{-1}_{\sigma_y(z)}\sigma_y(z)\stackrel{\eqref{eq:com1}}=\sigma_y\sigma^{-1}_{\sigma_y(z)}\sigma_x(z)
	\stackrel{\eqref{eq:per1}}=\\
	&\sigma_{\sigma_y\sigma^{-1}_y(y)}\sigma^{-1}_{\sigma_{x}(z)}\sigma_{\tau_x\tau^{-1}_x(x)}(z)\stackrel{\eqref{eq:per1}+\eqref{eq:per3}}=\\
	&\sigma_{\sigma_x\sigma^{-1}_y(y)}\sigma^{-1}_{\sigma_{\tau_y\tau^{-1}_x(x)}(z)}\sigma_{\tau_y\tau^{-1}_x(x)}(z)=\sigma_{\sigma_xU(y)}U \sigma_{\tau_yT(x)}(z).
\end{align*}
Similarly we can prove that $\tau_xT \tau_y=\tau_{\tau_xT(y)}T\tau_{\sigma_y U(x)}$ is satisfied too. For~\eqref{gis31}:
\begin{align*}
	&\tau_{\sigma_{a}U(z)}T\sigma_xU(y)=\\
	&\tau_{\sigma_{a}\sigma_z^{-1}(z)}\tau^{-1}_{\sigma_x\sigma^{-1}_y(y)}\sigma_x\sigma_y^{-1}(y)\stackrel{\eqref{eq:per3}+\eqref{eq:per4}}=
	\tau_{\sigma_{z}\sigma_z^{-1}(z)}\tau^{-1}_{\sigma_y\sigma^{-1}_y(y)}\sigma_x\sigma_y^{-1}(y)=\\
	&
	\tau_{z}\tau^{-1}_{y}\sigma_x\sigma_y^{-1}(y)\stackrel{\eqref{eq:com2}}=\sigma_x\sigma_y^{-1}\tau_{z}\tau^{-1}_{y}(y)=\sigma_{\tau_{x}\tau_x^{-1}(x)}\sigma^{-1}_{\tau_y\tau^{-1}_y(y)}\tau_z\tau_y^{-1}(y)
	\stackrel{\eqref{eq:per3}+\eqref{eq:per4}}=
	\\
	& 
	\sigma_{\tau_{b}\tau^{-1}_x(x)}\sigma^{-1}_{\tau_z\tau^{-1}_y(y)}\tau_z\tau_y^{-1}(y)= 
	\sigma_{\tau_{b}T(x)}U\tau_zT(y),
\end{align*}
and hence Conditions \eqref{gis1}, \eqref{gis3} and \eqref{gis31} are satisfied. 
	Now, for $x,y\in X$,
\begin{align*}		
	&\sigma_{\sigma_xU(y)}=\sigma_{\sigma_x\sigma_y^{-1}(y)}
	\stackrel{\eqref{eq:per1}}=\sigma_{\sigma_y\sigma_y^{-1}(y)}=
	\sigma_y=\sigma_{\tau_y\tau_y^{-1}(y)}\stackrel{\eqref{eq:per3}} =\sigma_{\tau_x\tau_y^{-1}(y)}=\sigma_{\tau_xT(y)}\\
	& \tau_{\tau_xT(y)}=\tau_{\tau_x\tau_y^{-1}(y)}\stackrel{\eqref{eq:per2}}=\tau_{\tau_y\tau_y^{-1}(y)}=\tau_y= \tau_{\sigma_y\sigma_y^{-1}(y)}\stackrel{\eqref{eq:per4}}=\tau_{\sigma_x\sigma_y^{-1}(y)}=\tau_{\sigma_xU(y)},
\end{align*}
and the $(U,T)$-isotope is $2$-reductive, according to Lemma~\ref{lem:2-red_iso}. Finally, 
\[
	\sigma_x U(x)=\sigma_x\sigma_x^{-1}(x)=x\qquad\text{and}\qquad
	\tau_x T(x)=\tau_x\tau^{-1}_x(x)=x,
\]
and the isotope is square-free.
\end{proof}

\begin{proposition}\label{prop:unsfree}
Let $(X,\sigma,\tau)$ be a $2$-permutational solution. 
Then there exists exactly one pair of permutations $(\pi_1,\pi_2)$ such that
the $(\pi_1,\pi_2)$-isotope of $(X,\sigma,\tau)$ is square-free.
\end{proposition}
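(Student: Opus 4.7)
The plan is to observe that this proposition is essentially a direct consequence of the square-free condition combined with the definition of an isotope. Existence is already furnished by Proposition~\ref{prop:2p_iso2}, which exhibits the pair $(U,T)$ as producing a square-free $2$-reductive isotope. So everything reduces to proving uniqueness.

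For uniqueness, suppose $(\pi_1,\pi_2)$ is any pair of permutations such that the $(\pi_1,\pi_2)$-isotope $(X,\mu,\nu)$, with $\mu_x=\sigma_x\pi_1$ and $\nu_x=\tau_x\pi_2$, is square-free. By the definition of square-freeness, $r(x,x)=(x,x)$ in the isotope, which means $\mu_x(x)=x$ and $\nu_x(x)=x$ for every $x\in X$. Expanding the definitions gives
\[
\sigma_x\pi_1(x)=x\qquad\text{and}\qquad \tau_x\pi_2(x)=x,
\]
so for every $x\in X$ we must have
\[
\pi_1(x)=\sigma_x^{-1}(x)=U(x)\qquad\text{and}\qquad \pi_2(x)=\tau_x^{-1}(x)=T(x).
\]
Thus $\pi_1=U$ and $\pi_2=T$ are forced, which proves uniqueness.

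There is essentially no obstacle here; the statement is a consequence of a pointwise comparison. The only thing to double-check is that the square-free definition is interpreted as $\mu_x(x)=x$ and $\nu_x(x)=x$, which is immediate from the convention $r=(\sigma,\tau)$ giving $r(x,x)=(\sigma_x(x),\tau_x(x))$ applied to the isotope. Together with Proposition~\ref{prop:2p_iso2}, this completes the proof.
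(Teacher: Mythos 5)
Your proof is correct and follows essentially the same route as the paper: existence is quoted from Proposition~\ref{prop:2p_iso2}, and uniqueness is obtained by evaluating the square-free condition at $(x,x)$ to force $\pi_1(x)=\sigma_x^{-1}(x)=U(x)$ and $\pi_2(x)=\tau_x^{-1}(x)=T(x)$. No issues.
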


\begin{proof}
The existence was proved in~Proposition~\ref{prop:2p_iso2}.
Let $(X,L,\mathbf{R})$ be a square free $(\pi_1,\pi_2)$-isotope of $(X,\sigma,\tau)$, for some bijections $\pi_1$ and $\pi_2$
Then, for each $x\in X$ 
\begin{align*}
&x=L_x(x)=\sigma_x\pi_1(x)\quad {\rm and}\quad x=\mathbf{R}_x(x)=\tau_x\pi_2(x).
\end{align*}
Hence 
\begin{align*}
&\pi_1(x)=\sigma_x^{-1}(x)=U(x)\quad {\rm and}\quad \pi_2(x)=\tau_x^{-1}(x)=T(x).&\qedhere
\end{align*}
\end{proof}




\begin{lemma}\label{lem:UT_auto2}
Let	$(X,\sigma,\tau)$ be a $2$-permutational solution and let $U$ and $T$ be its diagonals. Moreover, let $(X,L,\mathbf{R})$ be the square-free isotope of $(X,\sigma,\tau)$. Then $U$ and $T$ are automorphisms of~$(X,L,\mathbf{R})$.
\end{lemma}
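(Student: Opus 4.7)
The plan is to reduce this to facts already established, namely that $U$ and $T$ are automorphisms of the original solution $(X,\sigma,\tau)$ (Lemma~\ref{lem:UT_auto1}) and that $U$ and $T$ commute (the calculation preceding Lemma~\ref{lem:UT_auto1}). By Proposition~\ref{prop:unsfree} the square-free isotope has translations
\[
L_x = \sigma_x U \quad\text{and}\quad \mathbf{R}_x = \tau_x T,
\]
so verifying $\Phi L_x = L_{\Phi(x)}\Phi$ and $\Phi\mathbf{R}_x = \mathbf{R}_{\Phi(x)}\Phi$ for $\Phi\in\{U,T\}$ amounts to four short calculations.

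For $\Phi=U$, I would compute
\[
U L_x = U\sigma_x U = \sigma_{U(x)} U \cdot U = L_{U(x)} U,
\]
where the middle equality is the automorphism relation $U\sigma_x=\sigma_{U(x)}U$ from Lemma~\ref{lem:UT_auto1}. For the right translation I would write
\[
U\mathbf{R}_x = U\tau_x T = \tau_{U(x)} U T = \tau_{U(x)} T U = \mathbf{R}_{U(x)} U,
\]
using Lemma~\ref{lem:UT_auto1} once and the commutativity $UT=TU$ once. The case $\Phi=T$ is symmetric: $TL_x = T\sigma_x U = \sigma_{T(x)}TU = \sigma_{T(x)}UT = L_{T(x)}T$, and $T\mathbf{R}_x = T\tau_x T = \tau_{T(x)}T\cdot T = \mathbf{R}_{T(x)}T$.

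There is no genuine obstacle here; the statement is essentially a direct corollary of Lemma~\ref{lem:UT_auto1} and the $UT=TU$ identity. The only thing to keep track of is \emph{where} the commutativity of $U$ and $T$ is used, namely in exactly the two mixed expressions $U\tau_x T$ and $T\sigma_x U$; the other two cases $U\sigma_x U$ and $T\tau_x T$ need only the single automorphism relation. Since we are working inside the $2$-permutational hypothesis, every ingredient (Lemma~\ref{lem:UT_auto1}, the commutativity computation, and Proposition~\ref{prop:unsfree}) is already available.
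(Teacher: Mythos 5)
Your proposal is correct and follows essentially the same route as the paper: the paper's proof is exactly your computation $TL_x = T\sigma_x U = \sigma_{T(x)}TU = \sigma_{T(x)}UT = L_{T(x)}T$, invoking Lemma~\ref{lem:UT_auto1} and $UT=TU$, with the remaining three cases declared analogous or symmetric. You merely write out all four verifications explicitly, which changes nothing of substance.
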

\begin{proof}
By commutativity of $U$ and $T$ and 
Lemma~\ref{lem:UT_auto1}, for each~$x\in X$, we have
	\[
	L_{T(x)}T=\sigma_{T(x)}UT=\sigma_{T(x)}TU\stackrel{\eqref{isomorphism}}{=}
	T\sigma_x U=TL_x,
	\]
	and analogously
	for the mapping~$\mathbf{R}_x$. Hence $T$ is an automorphism of the solution $(X,L,\mathbf{R})$. The proof for~$U$ is symmetric.
\end{proof}

We have now a clearer algorithm how to construct all the solutions of multipermutation level~$2$. Each such a solution~$(X,\sigma,\tau)$ is determined by the triple $((X,L,\mathbf{R}),U,T)$ where $U$ and $T$ are its diagonal mappings and $(X,L,\mathbf{R})$ is the unique square-free solution
	isotopic to $(X,\sigma,\tau)$. And it turns out that two such triples yield isomorphic solutions if the triples are conjugated by an isomorphism.

\begin{proposition}\label{thm:iso}
Let $(X,L,\mathbf{R})$ and $(X',L',\mathbf{R}')$ be two $2$-reductive square-free solutions, let $\pi_1,\pi_2\in\Aut((X,L,\mathbf{R}))$
and let $\pi_1',\pi_2'\in\Aut((X',L',\mathbf{R}'))$, both the pairs satisfying \eqref{is1}--\eqref{is2}. 
Then the $(\pi_1,\pi_2)$-isotope of $(X,L,\mathbf{R})$ and the $(\pi_1^{'}\pi_2^{'})$-isotope of $(X',L',\mathbf{R}')$ are isomorphic $2$-permutational solutions 
if and only if there is an isomorphism $\varphi$ from $(X,L,\mathbf{R})$ onto $(X',L',\mathbf{R}')$
such that
\begin{enumerate}
\item $\pi_1^{'}=\varphi\pi_1\varphi^{-1}$
\item $\pi_2^{'}=\varphi\pi_2\varphi^{-1}$
\end{enumerate}
\end{proposition}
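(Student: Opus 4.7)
The plan is to exploit uniqueness of the square-free isotope (Proposition~\ref{prop:unsfree}) together with the observation that the diagonal mappings $U$ and $T$ of an isotope of a square-free 2-reductive solution are precisely the inverses of the bijections used in the isotopy. Concretely, if $\sigma_x=L_x\pi_1$ and $\tau_x=\mathbf{R}_x\pi_2$ and if $(X,L,\mathbf{R})$ is square-free, then from $L_x(x)=x$ one immediately gets $U(x)=\sigma_x^{-1}(x)=\pi_1^{-1}L_x^{-1}(x)=\pi_1^{-1}(x)$, so $U=\pi_1^{-1}$, and analogously $T=\pi_2^{-1}$. The same identities hold in the primed setting.

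For the ``if'' direction, given such an isomorphism $\varphi$ with $\pi_1'=\varphi\pi_1\varphi^{-1}$ and $\pi_2'=\varphi\pi_2\varphi^{-1}$, I would simply compute
\[
\varphi\sigma_x=\varphi L_x\pi_1=L'_{\varphi(x)}\varphi\pi_1=L'_{\varphi(x)}\pi_1'\varphi=\sigma'_{\varphi(x)}\varphi,
\]
and analogously $\varphi\tau_x=\tau'_{\varphi(x)}\varphi$, verifying~\eqref{isomorphism}.

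For the ``only if'' direction, assume that $\varphi$ is an isomorphism between the two 2-permutational isotopes. The first step is to check that $\varphi$ carries diagonals to diagonals: from $\sigma'_{\varphi(x)}=\varphi\sigma_x\varphi^{-1}$ I get
\[
U'(\varphi(x))=(\sigma'_{\varphi(x)})^{-1}(\varphi(x))=\varphi\sigma_x^{-1}(x)=\varphi U(x),
\]
so $\varphi U\varphi^{-1}=U'$, and analogously $\varphi T\varphi^{-1}=T'$. Using $\pi_1=U^{-1}$, $\pi_1'=U'^{-1}$, $\pi_2=T^{-1}$, $\pi_2'=T'^{-1}$, this yields the two conjugation identities (1) and (2). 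Finally, I check that $\varphi$ is also an isomorphism of the underlying square-free 2-reductive solutions: using (1),
\[
\varphi L_x=\varphi\sigma_x\pi_1^{-1}=\sigma'_{\varphi(x)}\varphi\pi_1^{-1}=\sigma'_{\varphi(x)}(\pi_1')^{-1}\varphi=L'_{\varphi(x)}\varphi,
\]
and analogously $\varphi\mathbf{R}_x=\mathbf{R}'_{\varphi(x)}\varphi$.

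No step is genuinely hard; the only thing requiring care is the bookkeeping of compositions and the observation that, because we are working with square-free isotopes, the diagonal mappings coincide with the inverses of the isotopy data. Everything else is formal.
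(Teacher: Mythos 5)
Your proof is correct, and both directions are verified completely. The ``if'' direction and the final verification that $\varphi$ intertwines $L$ with $L'$ (and $\mathbf{R}$ with $\mathbf{R}'$) coincide with the paper's computations. Where you genuinely diverge is in extracting the conjugation identities (1)--(2) in the ``only if'' direction: the paper does this by a direct substitution ($x\mapsto\pi_1(y)$, then $y\mapsto\pi_1^{-1}\varphi^{-1}(z)$) inside the identity $\varphi L_x\pi_1=L'_{\varphi(x)}\pi_1'\varphi$, using square-freeness of both solutions together with the hypothesis $\pi_1\in\Aut((X,L,\mathbf{R}))$; you instead observe that square-freeness forces $U=\pi_1^{-1}$ and $T=\pi_2^{-1}$ (which is exactly the content of Proposition~\ref{prop:unsfree} read through Lemma~\ref{lem:iso-sym}) and that any isomorphism of solutions conjugates diagonal maps, so that $\pi_1'=(U')^{-1}=\varphi U^{-1}\varphi^{-1}=\varphi\pi_1\varphi^{-1}$. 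Your route is slightly cleaner in two respects: it does not use the automorphism hypothesis on $\pi_1,\pi_2$ to obtain (1)--(2), and it makes explicit the invariant (the diagonal) that the paper's substitution computes implicitly --- indeed it realizes literally the remark preceding the proposition that a $2$-permutational solution is encoded by the triple $((X,L,\mathbf{R}),U,T)$ up to conjugation. The paper's version buys nothing extra here; the two arguments rely on square-freeness in the same essential way.
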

\begin{proof}
Let $(X,\sigma,\tau)$ be $(\pi_1,\pi_2)$-isotope of $(X,L,\mathbf{R})$, $(X',\sigma^{'},\tau^{'})$ be $(\pi_1^{'}\pi_2^{'})$-isotope of $(X',L',\mathbf{R}')$ and $\varphi\colon X\to X'$ be an isomorphism of $(X,\sigma,\tau)$ and $(X',\sigma^{'},\tau^{'})$. 
We prove that~$\varphi$ is an isomorphism from $(X,L,\mathbf{R})$ onto $(X',L',\mathbf{R}')$ satisfying (1) and (2).
For $x,y\in X$ and $z\in X'$ we have:
\begin{align*}
&\varphi L_x\pi_1(y)=\varphi \sigma_x(y) \stackrel{\eqref{isomorphism}}=\sigma_{\varphi(x)}^{'}\varphi(y)=L'_{\varphi(x)}\pi_1^{'}\varphi(y) \quad \stackrel{x\mapsto \pi_1(y)}\Rightarrow\\
&\varphi\pi_1(y)=\varphi\pi_1L_y(y)\stackrel{\eqref{isomorphism}}=\varphi L_{\pi_1(y)}\pi_1(y)
=L'_{\varphi(\pi_1(y))}\pi_1^{'}\varphi(y) \quad \stackrel{y\mapsto \pi^{-1}_1\varphi^{-1}(z)}\Rightarrow\\
&z=\varphi\pi_1\pi^{-1}_1\varphi^{-1}(z)=L'_z\pi_1^{'}\varphi \pi_1^{-1}\varphi^{-1}(z)\quad \Rightarrow\\
&z={L'_z}^{-1}(z)=\pi_1^{'}\varphi \pi_1^{-1}\varphi^{-1}(z)\quad \Rightarrow\quad \pi_1^{'}=\varphi \pi_1\varphi^{-1}.
\end{align*}
Moreover, according to the first and the last lines of the previous computation,
\begin{align*}
L'_{\varphi(x)}=\varphi L_x\pi_1\varphi^{-1}(\pi_1^{'})^{-1}= \varphi L_x\pi_1\varphi^{-1}\varphi\pi_1^{-1}\varphi^{-1}=\varphi L_x\varphi^{-1}.
\end{align*}
Similarly, we obtain that $\mathbf{R}'_{\varphi(x)}=\varphi \mathbf{R}_x\varphi^{-1}$ and $\pi_2^{'}=\varphi\pi_2\varphi^{-1}$. 

On the other hand, if there is an automorphism $\varphi\colon X\to X$ of the solution $(X,L,\mathbf{R})$ such that the conditions $(1)$ and $(2)$ are satisfied then 
\begin{align*}
\sigma_{\varphi(x)}^{'}\varphi=L'_{\varphi(x)}\pi_1^{'}\varphi=\varphi L_x\varphi^{-1}\varphi \pi_1\varphi^{-1}\varphi=\varphi L_x \pi_1=\varphi \sigma_x,
\end{align*}
and similarly, $\tau_{\varphi(x)}^{'}\varphi=\varphi \tau_x$, which completes the proof.
\end{proof}

Combining Proposition~\ref{prop:2p_iso2}, Proposition~\ref{prop:unsfree}, Lemma~\ref{lem:iso-sym}, Lemma~\ref{lem:UT_auto2} and Proposition~\ref{thm:iso} we obtain:
\begin{theorem}
	\label{th:repres}
	Let $(X,\sigma,\tau)$ be a $2$-permutational solution. Then there exist a unique, up to isomorphism, square-free $2$-reductive solution $(X,L,\mathbf{R})$ and a unique, up to conjugacy,
	pair of commuting automorphisms $(\pi_1,\pi_2)\in\Aut((X,L,\mathbf{R}))^2$ such that
	$(X,\sigma,\tau)$ is the $(\pi_1,\pi_2)$-isotope of $(X,L,\mathbf{R})$.
\end{theorem}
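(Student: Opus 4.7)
The plan is to prove the theorem as a direct consequence of the results gathered in this section.

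For existence, I would start with the $2$-permutational solution $(X,\sigma,\tau)$ and apply Proposition~\ref{prop:2p_iso2} to the pair $(U,T)$ of its diagonal mappings. This yields a square-free $2$-reductive solution $(X,L,\mathbf{R})$, which is precisely the $(U,T)$-isotope of $(X,\sigma,\tau)$. By Lemma~\ref{lem:iso-sym}, the solution $(X,\sigma,\tau)$ is then the $(U^{-1},T^{-1})$-isotope of $(X,L,\mathbf{R})$. Set $\pi_1 := U^{-1}$ and $\pi_2 := T^{-1}$. Lemma~\ref{lem:UT_auto2} guarantees that $U$ and $T$, and hence $\pi_1$ and $\pi_2$, are automorphisms of $(X,L,\mathbf{R})$, and since $U$ and $T$ commute (as was observed right after the definition of $U$ and $T$), so do $\pi_1$ and $\pi_2$. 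This gives a triple $((X,L,\mathbf{R}),\pi_1,\pi_2)$ as required.

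For uniqueness, suppose that $(X,\sigma,\tau)$ is also realized as the $(\pi_1',\pi_2')$-isotope of some other square-free $2$-reductive solution $(X',L',\mathbf{R}')$ with commuting automorphisms $\pi_1',\pi_2'\in\Aut((X',L',\mathbf{R}'))$. Since commuting automorphisms of a $2$-reductive solution automatically satisfy the isotopy conditions \eqref{is1}--\eqref{is2} (by the argument given immediately before Lemma~\ref{exm:piinv}), both triples produce isotopes that are isomorphic as $2$-permutational solutions (both being equal to $(X,\sigma,\tau)$ up to isomorphism). Proposition~\ref{thm:iso} then supplies an isomorphism $\varphi\colon (X,L,\mathbf{R})\to (X',L',\mathbf{R}')$ with $\pi_1'=\varphi\pi_1\varphi^{-1}$ and $\pi_2'=\varphi\pi_2\varphi^{-1}$. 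This gives uniqueness of the square-free $2$-reductive base up to isomorphism and uniqueness of the pair of automorphisms up to simultaneous conjugation.

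The main obstacle is purely bookkeeping: checking that all the hypotheses of Proposition~\ref{thm:iso} are met in the uniqueness step, in particular that the pair $(\pi_1',\pi_2')$ satisfies \eqref{is1}--\eqref{is2} on $(X',L',\mathbf{R}')$ and likewise for $(\pi_1,\pi_2)$ on $(X,L,\mathbf{R})$. Since both pairs consist of commuting automorphisms of the respective $2$-reductive solutions, the remark preceding Lemma~\ref{exm:piinv} applies, so no additional argument is needed. All the real work has already been done in the earlier propositions and lemmas, and the theorem is essentially their concatenation.
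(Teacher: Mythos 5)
Your proof is correct and follows essentially the same route as the paper, which obtains the theorem by combining Proposition~\ref{prop:2p_iso2}, Proposition~\ref{prop:unsfree}, Lemma~\ref{lem:iso-sym}, Lemma~\ref{lem:UT_auto2} and Proposition~\ref{thm:iso} in exactly the way you spell out. One caveat: your parenthetical claim that commuting automorphisms of a $2$-reductive solution \emph{automatically} satisfy \eqref{is1}--\eqref{is2} is false --- by Lemma~\ref{exm:piinv} only \eqref{is2} is automatic, and the full set of conditions holds precisely when $\pi_1\pi_2$ commutes with $\{L_x,\mathbf{R}_x\mid x\in X\}$. This does not damage your argument, because in the uniqueness step the hypothesis that $(X,\sigma,\tau)$ \emph{is} the $(\pi_1',\pi_2')$-isotope already entails \eqref{is1}--\eqref{is2} by the definition of an isotope, so the hypotheses of Proposition~\ref{thm:iso} are met for free.
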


\begin{rem}
	It is not true in general that every solution admits a square-free isotope. Take the solution $(X,\sigma,\tau)$ defined in~Example~\ref{exm:irretractable}. If we try to construct the $(U,T)$-isotope $(X,L,\mathbf{R})$ then we do not get a solution since
	\begin{align*}
		L_0L_1&=\sigma_{0} U\sigma_{1} U=(2,3) (0,2,3) (2,3)=(0,3,2),\\
		L_{L_0(1)}L_{\mathbf{R}_1(0)}&=\sigma_{\sigma_0U(1)}U\sigma_{\tau_1T(0)}U=\sigma_{1}U\sigma_{0}U=(0,2,3).
	\end{align*}
\end{rem}

\section{$2$-permutational involutive solutions}\label{sec:invol}

In this section we look at the results from the previous section and present conditions under which we obtain an involutive solution.
Recall that a solution $(X,\sigma,\tau)$ is involutive if, for each $x,y\in X$, 
$\tau_y(x)=\sigma_{\sigma_x(y)}^{-1}(x)$  or equivalently, $\sigma_x(y)=\tau^{-1}_{\tau_y(x)}(y)$. In the case of $2$-permutational solutions, it means that for each $x,y\in X$, 
\begin{align}
\tau_y=\sigma_{\sigma_x(y)}^{-1}\quad {\rm or \; equivalently} \quad\sigma_x=\tau^{-1}_{\tau_y(x)}.\label{eq:invper}
\end{align}

\begin{lemma}\cite[Lemma 3.7]{JP23a}\label{lm:lir}
A $2$-reductive solution is involutive if and only if it satisfies {\bf lri}.
\end{lemma}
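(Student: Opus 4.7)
The plan is to prove the two directions independently, each one being a one-line computation once the relevant 2-reductivity identity is invoked. The key observation is that the involutive relation $\tau_y(x)=\sigma_{\sigma_x(y)}^{-1}(x)$ involves the subscript $\sigma_x(y)$, and 2-reductivity (specifically \eqref{eq:red1}) allows us to replace $\sigma_{\sigma_x(y)}$ by $\sigma_y$. After this replacement, the involutive condition becomes literally $\tau_y(x)=\sigma_y^{-1}(x)$, which is exactly \textbf{lri}.

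For the forward direction, assume $(X,\sigma,\tau)$ is 2-reductive and involutive. Fix arbitrary $x,y\in X$. Starting from the involutive identity \eqref{eq:involutive}, namely $\tau_y(x)=\sigma_{\sigma_x(y)}^{-1}(x)$, I would apply \eqref{eq:red1} to rewrite $\sigma_{\sigma_x(y)}=\sigma_y$. This yields $\tau_y(x)=\sigma_y^{-1}(x)$. Since $x$ is arbitrary this gives $\tau_y=\sigma_y^{-1}$, and since $y$ is arbitrary this is precisely \textbf{lri}.

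For the reverse direction, assume the solution is 2-reductive and satisfies \textbf{lri}. Then for any $x,y\in X$, using \eqref{eq:red1} again we have $\sigma_{\sigma_x(y)}^{-1}(x)=\sigma_y^{-1}(x)=\tau_y(x)$, where the last equality is \textbf{lri}. Thus \eqref{eq:involutive} holds for all $x,y$, so the solution is involutive.

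There is essentially no obstacle here: both directions reduce to the single substitution $\sigma_{\sigma_x(y)}=\sigma_y$ provided by 2-reductivity, combined with reading the definitions of involutivity and \textbf{lri}. One could equally well argue using \eqref{eq:red2} and the symmetric form $\sigma_x(y)=\tau^{-1}_{\tau_y(x)}(y)$ of the involutive condition, which would show that \textbf{lri} is also equivalent to $\sigma_x=\tau^{-1}_{\tau_y(x)}$ in the 2-reductive setting; either proof has the same structure.
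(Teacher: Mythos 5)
The paper does not prove this lemma itself --- it is quoted from \cite[Lemma 3.7]{JP23a} --- so there is no in-paper argument to compare against; your proof is the natural one and it is correct. One small point of rigour: involutivity means $r^2=\mathrm{id}_{X^2}$, which is the \emph{conjunction} of the two identities in \eqref{eq:involutive}, so in the reverse direction you must verify both $\sigma_{\sigma_x(y)}(\tau_y(x))=x$ and $\tau_{\tau_y(x)}(\sigma_x(y))=y$; your main text only checks the first, and checking only one coordinate of $r^2$ does not in general force $r^2=\mathrm{id}$. The second identity follows just as quickly (e.g.\ $\tau_{\tau_y(x)}^{-1}(y)=\sigma_{\tau_y(x)}(y)=\sigma_x(y)$ using \textbf{lri} and \eqref{eq:red3}), and your closing remark about the ``symmetric form'' essentially supplies this --- but it should be stated as a required half of the argument rather than as an optional alternative proof.
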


\begin{lemma}
Let $(X,\sigma,\tau)$ be a $2$-permutational involutive solution. Then, for $x,y\in X$,
\begin{align}
&\sigma_x\tau_y\tau_x=\tau_{\sigma_x(y)}\label{eq:stt}\\
&\tau_y\tau^{-1}_x=\sigma_x\tau_{\sigma^{-1}_x(y)}.\label{eq:tt}
\end{align}
\end{lemma}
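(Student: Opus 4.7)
The plan is to derive \eqref{eq:stt} by combining a single $2$-permutational identity with the involutivity relation~\eqref{eq:invper}, and then to deduce \eqref{eq:tt} from~\eqref{eq:stt} by a routine substitution.

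For~\eqref{eq:stt}, I would first specialize~\eqref{eq:tauinverse4} by setting $z:=y$ and $a:=x$ (keeping $b$ as a free parameter), which gives
\[
\sigma_x\tau_y=\tau_{\sigma_x(y)}\,\sigma_{\sigma_b(x)}.
\]
Next, I would invoke involutivity in the form~\eqref{eq:invper}: since $\tau_x=\sigma^{-1}_{\sigma_b(x)}$ for every $b$, the factor $\sigma_{\sigma_b(x)}$ equals $\tau^{-1}_x$. Substituting and multiplying on the right by $\tau_x$ then yields~\eqref{eq:stt} directly.

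For~\eqref{eq:tt}, I would substitute $y\mapsto\sigma^{-1}_x(y)$ in the already-established~\eqref{eq:stt} to obtain
\[
\sigma_x\tau_{\sigma^{-1}_x(y)}\tau_x=\tau_{\sigma_x\sigma^{-1}_x(y)}=\tau_y,
\]
and then multiply on the right by $\tau^{-1}_x$ to get the desired identity.

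The only conceptual point worth highlighting is the matching of the free parameter $b$ in~\eqref{eq:tauinverse4} with the free parameter in~\eqref{eq:invper}: this is precisely what makes the apparently spurious factor $\sigma_{\sigma_b(x)}$ collapse to $\tau^{-1}_x$. Once that cancellation is spotted, both identities are obtained in a few lines, and no deeper structural facts about $2$-permutational involutive solutions beyond \eqref{eq:invper} and \eqref{eq:tauinverse4} are needed.
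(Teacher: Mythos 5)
Your proposal is correct and follows essentially the same route as the paper: identity~\eqref{eq:stt} is obtained by specializing~\eqref{eq:tauinverse4} and collapsing the factor $\sigma_{\sigma_b(x)}$ to $\tau_x^{-1}$ via~\eqref{eq:invper}, and~\eqref{eq:tt} then follows by the substitution $y\mapsto\sigma_x^{-1}(y)$. No gaps.
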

\begin{proof}
For $x,y,z\in X$ we have
\begin{align*}
&\eqref{eq:stt}:&&\sigma_x\tau_y\tau_x\stackrel{\eqref{eq:tauinverse4}}=\tau_{\sigma_x(y)}\sigma_{\sigma_z(x)}\tau_x\stackrel{\eqref{eq:invper}}
=\tau_{\sigma_x(y)}.\\
&\eqref{eq:tt}: &&\sigma_x\tau_y\stackrel{\eqref{eq:stt}}=\tau_{\sigma_x(y)}\tau^{-1}_x\quad
\stackrel{y\mapsto \sigma^{-1}_x(y)}\Rightarrow\quad \sigma_x\tau_{\sigma^{-1}_x(y)}=\tau_{y}\tau^{-1}_x. \qedhere
\end{align*}
\end{proof}

\begin{proposition}
A $2$-reductive $(\pi_1,\pi_2)$-isotope of a $2$-permutational solution $(X,\sigma,\tau)$ is involutive if and only if, for each $x\in X$,
\begin{align}
&\pi_2^{-1}=\sigma_x\pi_1\tau_x.\label{eq:forpi2}
\end{align}

\end{proposition}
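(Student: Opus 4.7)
The plan is to reduce this statement directly to Lemma~\ref{lm:lir}, which says that a $2$-reductive solution is involutive if and only if it satisfies Condition~{\bf lri}, that is, $L_x = \mathbf{R}_x^{-1}$ for every $x \in X$. Once this reduction is made, both implications become a one-line calculation of composing permutations; the main obstacle (if any) is just to make sure we unfold the definitions correctly, since the isotope is defined by $L_x = \sigma_x \pi_1$ and $\mathbf{R}_x = \tau_x \pi_2$.

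More concretely, I would first invoke Lemma~\ref{lm:lir} to replace the property ``involutive'' in the $2$-reductive isotope $(X, L, \mathbf{R})$ by the condition
\[
L_x \mathbf{R}_x = \mathrm{id}, \qquad \text{for every } x \in X.
\]
Substituting the definitions of $L_x$ and $\mathbf{R}_x$, this reads $\sigma_x \pi_1 \tau_x \pi_2 = \mathrm{id}$, which is equivalent to
\[
\sigma_x \pi_1 \tau_x = \pi_2^{-1}, \qquad \text{for every } x \in X,
\]
precisely the displayed Condition~\eqref{eq:forpi2}. This equivalence is a purely formal manipulation and gives both directions simultaneously.

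For the forward direction, one assumes the isotope is $2$-reductive and involutive, applies Lemma~\ref{lm:lir} to conclude {\bf lri}, and then rearranges as above to recover~\eqref{eq:forpi2}. For the converse, one starts from~\eqref{eq:forpi2}, reads it backwards as $L_x \mathbf{R}_x = \mathrm{id}$, i.e.\ {\bf lri}, and again applies Lemma~\ref{lm:lir} (this time in the other direction) to conclude that the $2$-reductive isotope is involutive. No additional use of the $2$-permutational hypothesis on $(X,\sigma,\tau)$ is needed beyond what is already absorbed in the assumption that the isotope is $2$-reductive (guaranteed, for instance, by Lemma~\ref{lem:2-red_iso}); in particular this shows that under~\eqref{eq:forpi2} the quantity $\sigma_x \pi_1 \tau_x$ is automatically independent of $x$.
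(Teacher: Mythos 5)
Your proposal is correct and follows essentially the same route as the paper: both reduce the statement to Lemma~\ref{lm:lir} and then unfold $L_x=\sigma_x\pi_1$, $\mathbf{R}_x=\tau_x\pi_2$ to rewrite the \textbf{lri} condition as $\sigma_x\pi_1\tau_x=\pi_2^{-1}$. The only difference is cosmetic (you phrase \textbf{lri} as $L_x\mathbf{R}_x=\mathrm{id}$ rather than $\mathbf{R}_x=L_x^{-1}$).
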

\begin{proof}
Let $(X,L,\mathbf{R})$ be $2$-reductive $(\pi_1,\pi_2)$-isotope of $(X,\sigma,\tau)$.
So, for each $x\in X$,
\begin{align*}
&L_x=\sigma_x\pi_1\quad {\rm and}\quad \mathbf{R}_x=\tau_x\pi_2.
\end{align*}
By Lemma \ref{lm:lir}, $(X,L,\mathbf{R})$ is involutive if and only if it satisfies {\bf lri}:
\begin{align*}
\mathbf{R}_x=L_x^{-1}\quad \Leftrightarrow\quad \pi_1^{-1}\sigma_x^{-1}=L_x^{-1}=\mathbf{R}_x=\tau_x\pi_2\quad \Leftrightarrow\quad \tau_x^{-1}\pi_1^{-1}\sigma_x^{-1}=\pi_2.
\end{align*}
\end{proof}

\begin{corollary}
Let $(X,\sigma,\tau)$ be a $2$-permutational involutive solution and let $e\in X$. Then $2$-reductive $(\sigma^{-1}_e,\tau^{-1}_e)$-isotope of $(X,\sigma,\tau)$ is also involutive. 
\end{corollary}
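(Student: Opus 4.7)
My plan is to reduce the corollary to verifying the single equational condition provided by the preceding proposition. Since the $(\sigma_e^{-1},\tau_e^{-1})$-isotope is already known to be $2$-reductive by Proposition~\ref{prop:2p_iso1}, I only have to confirm the involutivity criterion $\pi_2^{-1}=\sigma_x\pi_1\tau_x$ with $\pi_1=\sigma_e^{-1}$ and $\pi_2=\tau_e^{-1}$; that is, I need to show
\[
\tau_e=\sigma_x\sigma_e^{-1}\tau_x
\]
for every $x\in X$.

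To handle this identity, the first step is to rewrite $\sigma_e^{-1}$ as a $\tau$-translation using involutivity. Specifically, setting $x=y=e$ in the equivalent involutive identity $\sigma_x=\tau_{\tau_y(x)}^{-1}$ from~\eqref{eq:invper} gives $\sigma_e^{-1}=\tau_{\tau_e(e)}$. Substituting this into the right-hand side converts the target to
\[
\sigma_x\sigma_e^{-1}\tau_x=\sigma_x\tau_{\tau_e(e)}\tau_x,
\]
which now has exactly the shape required to invoke~\eqref{eq:stt}.

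Next, applying~\eqref{eq:stt} with $y=\tau_e(e)$ collapses the right side to a single $\tau$-translation: $\sigma_x\tau_{\tau_e(e)}\tau_x=\tau_{\sigma_x\tau_e(e)}$. The final step is to recognize $\sigma_x\tau_e(e)$ as an instance of the $2$-permutational identity~\eqref{eq:tausigmatau}, which yields $\tau_{\sigma_x\tau_e(e)}=\tau_e$. Chaining these three substitutions completes the verification of the condition from the preceding proposition, and the corollary follows.

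The whole argument is a short sequence of identity substitutions, so I do not expect a real obstacle; the only subtle moment is the very first one — noticing that the correct way to use involutivity is through $\sigma_e^{-1}=\tau_{\tau_e(e)}$ rather than through the more tempting $\sigma_{\sigma_x(e)}=\tau_e^{-1}$, because the latter would leave us with a $\sigma$-translation in the middle that is not immediately simplified by~\eqref{eq:stt}. Once the expression is cast entirely in terms of $\tau$'s separated by the single $\sigma_x$, the $2$-permutational identities~\eqref{eq:stt} and~\eqref{eq:tausigmatau} do the rest automatically.
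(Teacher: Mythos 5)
Your proof is correct and follows essentially the same route as the paper: both verify the \textbf{lri}-type condition for the isotope (the paper computes $L_x\mathbf{R}_x=\sigma_x\sigma_e^{-1}\tau_x\tau_e^{-1}=\id$ directly via \eqref{eq:tt}, \eqref{eq:per4} and \eqref{eq:invper}, while you check the equivalent packaged criterion $\pi_2^{-1}=\sigma_x\pi_1\tau_x$ from the preceding proposition using \eqref{eq:invper}, \eqref{eq:stt} and \eqref{eq:tausigmatau}). The identities invoked are from the same small pool --- \eqref{eq:tt} is itself derived from \eqref{eq:stt} --- so the two arguments are interchangeable.
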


\begin{proof}
Let $(X,L,\mathbf{R})$ be the $(\sigma^{-1}_e,\tau^{-1}_e)$-isotope of $(X,\sigma,\tau)$. Hence, for $x\in X$

\begin{align*}
&L_x\mathbf{R}_x=\sigma_x\sigma_e^{-1}\tau_x\tau^{-1}_e\stackrel{\eqref{eq:tt}}=\sigma_x\sigma^{-1}_e\sigma_e\tau_{\sigma^{-1}_e(x)}\stackrel{\eqref{eq:per4}}=\sigma_x\tau_{\sigma^{-1}_y(x)}\stackrel{\eqref{eq:invper}}=\sigma_x\sigma^{-1}_x=\id. \qedhere
\end{align*}
\end{proof}

We have even more. If $(X,\sigma,\tau)$ is a $2$-reductive involutive $(\sigma^{-1}_e,\pi_2)$-isotope of a $2$-permutational solution $(X,\sigma,\tau)$ then by \eqref{eq:forpi2}, $\pi_2^{-1}=\sigma_x\sigma_e^{-1}\tau_x$, for each $x\in X$. Hence $x=e$ forces 
\begin{align*}
\pi_2=(\sigma_e\sigma_e^{-1}\tau_e)^{-1}=\tau_e^{-1}.
\end{align*}

\begin{corollary}\label{cor:UT}
Let $(X,\sigma,\tau)$ be a $2$-permutational involutive solution. Then the $2$-reductive $(U,T)$-isotope of $(X,\sigma,\tau)$ is also involutive. 
\end{corollary}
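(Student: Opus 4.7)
The plan is to use the proposition immediately preceding this corollary, which characterizes when a $2$-reductive $(\pi_1,\pi_2)$-isotope of a $2$-permutational solution is involutive: one must verify $\pi_2^{-1} = \sigma_x \pi_1 \tau_x$ for every $x \in X$. Alternatively, since by Proposition~\ref{prop:2p_iso2} the $(U,T)$-isotope is automatically $2$-reductive, Lemma~\ref{lm:lir} tells us it is involutive precisely when it satisfies {\bf lri}, i.e.\ $L_x \mathbf{R}_x = \sigma_x U \tau_x T = \id$ for every $x$. I would pursue this second, more direct route.

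The first key step will be to establish, in any $2$-permutational involutive solution, the identity $\tau_{U(x)} = \sigma_x^{-1}$. By \eqref{eq:tauinverse5} I get $\tau_{U(x)} = \tau_{\sigma_x^{-1}(x)} = \tau_{\tau_z(x)}$ for any $z \in X$; combining this with the involutive identity \eqref{eq:invper} in the equivalent form $\tau_{\tau_y(x)} = \sigma_x^{-1}$ yields the claim. Evaluating this identity at $x$ produces $\tau_{U(x)}(x) = \sigma_x^{-1}(x)$, but the left-hand side is by definition $T^{-1}(x)$ and the right-hand side is $U(x)$, so $T = U^{-1}$ as permutations.

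Given these two facts, the verification becomes a one-line calculation. Using that $U$ is an automorphism of $(X,\sigma,\tau)$ by Lemma~\ref{lem:UT_auto1}, and hence $U\tau_x = \tau_{U(x)}U$, I would write
\[L_x \mathbf{R}_x = \sigma_x U \tau_x T = \sigma_x \tau_{U(x)} U T = \sigma_x \sigma_x^{-1} \cdot UT = UT = \id,\]
where the penultimate equality uses the preparatory identity and the final one uses $T = U^{-1}$. Then Lemma~\ref{lm:lir} delivers the involutivity of the $(U,T)$-isotope. I expect the only real obstacle to be isolating the correct preparatory identity $\tau_{U(x)} = \sigma_x^{-1}$; once this is in place the computation collapses cleanly, and the proof is essentially the same length as the preceding corollary's.
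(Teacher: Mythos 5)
Your proof is correct, and it reaches the same endpoint as the paper --- verifying $L_x\mathbf{R}_x=\id$ and invoking Lemma~\ref{lm:lir} --- but by a genuinely different computation. The paper's proof works pointwise: it expands $\sigma_xU\tau_xT(y)$, uses \eqref{eq:per3} to reduce $\sigma_{\tau_x\tau_y^{-1}(y)}$ to $\sigma_y$, and then applies \eqref{eq:tt} and \eqref{eq:invper} to collapse $\sigma_x\sigma_y^{-1}\tau_x\tau_y^{-1}(y)$ to $y$; the diagonal maps enter only through their defining formulas, never as structural objects. You instead isolate two structural facts --- $\tau_{U(x)}=\sigma_x^{-1}$ (correctly derived from \eqref{eq:tauinverse5} and \eqref{eq:invper}) and $T=U^{-1}$ --- and then conjugate $\tau_x$ past $U$ using the automorphism property of $U$ (Lemma~\ref{lem:UT_auto1}), which the paper's proof does not invoke. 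Your route buys a cleaner conceptual picture: it recovers the classical fact that the two diagonals of an involutive solution are mutually inverse and makes visible why the two twists cancel. One small imprecision: $\tau_{U(x)}(x)=T^{-1}(x)$ is not ``by definition'' but follows from the derived formula $T^{-1}(x)=\tau_{\sigma_x^{-1}(x)}(x)$ stated (and proved in the $2$-permutational case) at the start of Section~\ref{sec:isosqfree}; alternatively you could bypass that formula by computing $TU(x)=\tau_{U(x)}^{-1}(U(x))=\sigma_xU(x)=x$ directly from your preparatory identity.
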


\begin{proof}
Let $(X,L,\mathbf{R})$ be $(U,T)$-isotope of $(X,\sigma,\tau)$. Hence for $x,y\in X$
\begin{multline*}
L_x\mathbf{R}_x=\sigma_xU\tau_xT(y)=
\sigma_x\sigma^{-1}_{\tau_x\tau_y^{-1}(y)}\tau_x\tau_y^{-1}(y)
\stackrel{\eqref{eq:per3}}=\\
\sigma_x\sigma^{-1}_{y}\tau_x\tau_y^{-1}(y)
\stackrel{\eqref{eq:tt}}=
\sigma_x\tau_{\sigma^{-1}_y(x)}(y)\stackrel{\eqref{eq:invper}}=\sigma_x\sigma^{-1}_x(y)=y. \qedhere
\end{multline*}
\end{proof}

\begin{proposition}\label{thm:invred}
Let $(X,\sigma,\tau)$ be a $(\pi_1,\pi_2)$-isotope of a $2$-reductive  involutive solution $(X,L,\mathbf{R})$. Then $(X,\sigma,\tau)$ is involutive if and only if, for each $x\in X$,
\begin{align}\label{eq:Lpi2}
\pi_2=L_x\pi_1^{-1}L^{-1}_{\pi_1(x)}.
\end{align}
\end{proposition}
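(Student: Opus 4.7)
The plan is to unpack the involutivity condition for the $(\pi_1,\pi_2)$-isotope directly, using that $(X,L,\mathbf{R})$ is $2$-reductive and satisfies \textbf{lri}.

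First, by Lemma~\ref{lm:iso2red} the isotope $(X,\sigma,\tau)$ is already $2$-permutational, so we may use the equivalent formulation of involutivity from~\eqref{eq:invper}: the solution is involutive if and only if
\[\tau_y=\sigma^{-1}_{\sigma_x(y)}\]
for all $x,y\in X$. Next, since $(X,L,\mathbf{R})$ is $2$-reductive involutive, Lemma~\ref{lm:lir} gives $\mathbf{R}_x=L_x^{-1}$, hence $\tau_x=L_x^{-1}\pi_2$, while $\sigma_x=L_x\pi_1$ by definition of the isotope.

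Now I would compute the right-hand side $\sigma^{-1}_{\sigma_x(y)}$ explicitly using $2$-reductivity of $(X,L,\mathbf{R})$:
\[\sigma_{\sigma_x(y)}=L_{L_x\pi_1(y)}\,\pi_1\stackrel{\eqref{eq:red1}}{=}L_{\pi_1(y)}\,\pi_1,\]
which is independent of $x$ (this is the sanity check that the stated equation does not depend on $x$ either). Therefore
\[\sigma^{-1}_{\sigma_x(y)}=\pi_1^{-1}L_{\pi_1(y)}^{-1}.\]

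Equating this with $\tau_y=L_y^{-1}\pi_2$ and solving for $\pi_2$ yields $\pi_2=L_y\pi_1^{-1}L_{\pi_1(y)}^{-1}$, which is precisely~\eqref{eq:Lpi2} after renaming $y$ to $x$. Both implications follow at once: if the isotope is involutive, the above equality must hold for every $y$; conversely, if~\eqref{eq:Lpi2} holds for all $x$, then reversing the computation yields $\tau_y=\sigma^{-1}_{\sigma_x(y)}$ for all $x,y$, whence involutivity via~\eqref{eq:invper}. No real obstacle is anticipated; the only point that deserves care is verifying that $\sigma_{\sigma_x(y)}$ is $x$-independent so that~\eqref{eq:invper} reduces to a condition on $\pi_1,\pi_2$ alone, and this is handled by the $2$-reductivity identity~\eqref{eq:red1}.
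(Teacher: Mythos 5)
Your proof is correct and follows essentially the same route as the paper: express $\sigma_x=L_x\pi_1$ and $\tau_x=L_x^{-1}\pi_2$ via \textbf{lri}, reduce $\sigma^{-1}_{\sigma_x(y)}$ to $\pi_1^{-1}L^{-1}_{\pi_1(y)}$ using $2$-reductivity, and equate with $\tau_y$. The only (harmless) addition is your explicit justification, via Lemma~\ref{lm:iso2red} and \eqref{eq:invper}, that the pointwise involutivity condition upgrades to an equality of permutations, which the paper uses implicitly.
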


\begin{proof}
Let $(X,\sigma,\tau)$ be a $(\pi_1,\pi_2)$-isotope of a $2$-reductive involutive solution $(X,L,\mathbf{R})$. Then, for each $x\in X$, $\sigma_x=L_x\pi_1$, $\tau_x=\mathbf{R}_x\pi_2$ and $L_x=\mathbf{R}^{-1}_x$. This implies 
$\tau_x=\mathbf{R}_x\pi_2=L_x^{-1}\pi_2$. Therefore, for $y\in X$, we have: \begin{align*}
&\tau_y=\sigma^{-1}_{\sigma_x(y)}\quad \Leftrightarrow\quad L_y^{-1}\pi_2=\pi_1^{-1}L^{-1}_{L_x\pi_1(y)}
\quad \Leftrightarrow\quad L_y^{-1}\pi_2=\pi_1^{-1}L^{-1}_{\pi_1(y)}.
\end{align*}
\end{proof}

By results of Section~\ref{sec:isosqfree} and Corollary \ref{cor:UT} we have seen that every involutive $2$-permutational solution can be obtained as some $(\pi,\pi^{-1})$-isotope of a $2$-reductive solution $(X,L,\mathbf{R})$, where $\pi$ is an automorphism of $(X,L,\mathbf{R})$. This necessary condition turns out to be sufficient.

\begin{corollary}\label{cor:invol-ex}
Let $(X,\sigma,\tau)$ be a $(\pi_1,\pi_2)$-isotope of a $2$-reductive square free involutive solution $(X,L,\mathbf{R})$. The solution $(X,\sigma,\tau)$ is involutive if and only if
$\pi_2=\pi_1^{-1}$ and $\pi_1$ is an automorphism of $(X,L,\mathbf{R})$.
\end{corollary}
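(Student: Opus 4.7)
The plan is to derive both directions from Proposition~\ref{thm:invred}, which asserts that the isotope is involutive exactly when
\[\pi_2 = L_x\,\pi_1^{-1}\,L_{\pi_1(x)}^{-1}\]
holds for every $x\in X$. Since $(X,L,\mathbf{R})$ is square-free, we have $L_y(y)=y$ for every $y\in X$, and this diagonal identity will carry the whole argument.

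For the forward direction, I would evaluate both sides of the identity at the specific element $\pi_1(x)$. Square-freeness gives $L_{\pi_1(x)}^{-1}(\pi_1(x)) = \pi_1(x)$; then $\pi_1^{-1}$ sends this to $x$; and $L_x$ fixes $x$ again by square-freeness. The right-hand side therefore collapses to $x$, yielding $\pi_2\pi_1(x)=x$ for every $x$, that is, $\pi_2 = \pi_1^{-1}$. Substituting back into the identity from Proposition~\ref{thm:invred} gives $\pi_1^{-1} = L_x\pi_1^{-1}L_{\pi_1(x)}^{-1}$, which rearranges to $\pi_1 L_x \pi_1^{-1} = L_{\pi_1(x)}$; this is one half of $\pi_1$ being an automorphism. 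To obtain the analogous relation for $\mathbf{R}$, I would invoke Lemma~\ref{lm:lir}, which says that $\mathbf{R}_y = L_y^{-1}$ for every $y$, since $(X,L,\mathbf{R})$ is $2$-reductive and involutive; inverting both sides of the $L$-relation then gives $\pi_1 \mathbf{R}_x \pi_1^{-1} = \mathbf{R}_{\pi_1(x)}$. Together these show that $\pi_1\in\Aut((X,L,\mathbf{R}))$.

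For the backward direction, assume $\pi_2=\pi_1^{-1}$ and that $\pi_1$ is an automorphism of $(X,L,\mathbf{R})$. Then the automorphism relation yields $L_{\pi_1(x)} = \pi_1 L_x \pi_1^{-1}$, so
\[
L_x\,\pi_1^{-1}\,L_{\pi_1(x)}^{-1} = L_x\,\pi_1^{-1}\,\pi_1 L_x^{-1}\pi_1^{-1} = \pi_1^{-1} = \pi_2,
\]
and Proposition~\ref{thm:invred} delivers the involutivity of $(X,\sigma,\tau)$. (One should also observe that the pair $(\pi_1,\pi_1^{-1})$ gives rise to a genuine isotope, but this is already covered by the remark after Lemma~\ref{exm:piinv}.)

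The only delicate step is the forward direction: the trick of evaluating the identity at $\pi_1(x)$ in order to twice exploit the square-free diagonal $L_y(y)=y$. Once this evaluation point is identified, everything else is routine algebraic rearrangement together with an appeal to condition {\bf lri} for the $\mathbf{R}$-half.
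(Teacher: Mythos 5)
Your proposal is correct and follows essentially the same route as the paper: both directions rest on Proposition~\ref{thm:invred}, the forward direction evaluates the identity $\pi_2=L_x\pi_1^{-1}L^{-1}_{\pi_1(x)}$ at the point $\pi_1(x)$ and uses square-freeness twice to get $\pi_2\pi_1=\id$, after which substituting back yields the automorphism condition for $L$ (and hence for $\mathbf{R}$ via \textbf{lri}). You merely spell out the $\mathbf{R}$-half and the converse, which the paper leaves implicit.
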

\begin{proof}
Suppose that $(X,\sigma,\tau)$ is involutive.
By Proposition \ref{thm:invred}, for each  $y\in X$: \begin{align*}
&\pi_2\stackrel{\eqref{eq:Lpi2}}=L_y\pi_1^{-1}L^{-1}_{\pi_1(y)}\quad \Rightarrow\quad \pi_2\pi_1(y)=L_y\pi_1^{-1}L^{-1}_{\pi_1(y)}\pi_1(y)=
L_y\pi_1^{-1}\pi_1(y)=L_y(y)=y.
\end{align*}
Hence $\pi_1=\pi_2^{-1}$ and Proposition 6.6
requires $\pi_1L_x=L_{\pi_1(x)}\pi_1$ which is equivalent to $\pi_1$ being an automorphism of $(X,L,\mathbf{R})$.
The other direction is straightforward.
\end{proof}


Directly by Theorem \ref{thm:iso} we obtain the following.
\begin{corollary}\label{cor:invol-uni}
Let $(X,L,\mathbf{R})$ be a $2$-reductive square free involutive solution and $\pi,\psi$ be bijections on $X$ satisfying \eqref{is1}--\eqref{is2}.
Then $(\pi,\pi^{-1})$-isotope and $(\psi,\psi^{-1})$-isotope of $(X,L,\mathbf{R})$ are isomorphic $2$-permutational involutive solutions if and only if there is an automorphism $\varphi\colon X\to X$ of the solution $(X,L,\mathbf{R})$ such that
\begin{align*}
&\psi=\varphi\pi\varphi^{-1}.
\end{align*}
\end{corollary}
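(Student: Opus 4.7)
The plan is to derive this corollary as a direct specialization of Proposition~\ref{thm:iso}, using the involutivity hypothesis only to pin down which pairs of bijections are in play. First, I would note that since both isotopes are assumed to be involutive $2$-permutational solutions and the base $(X,L,\mathbf{R})$ is square-free $2$-reductive involutive, Corollary~\ref{cor:invol-ex} applies to each: it forces $\pi$ and $\psi$ to be automorphisms of $(X,L,\mathbf{R})$ (and the second components of each pair to be their inverses). This exactly places us in the setting of Proposition~\ref{thm:iso}, with $(X,L,\mathbf{R})=(X',L',\mathbf{R}')$, the pair $(\pi_1,\pi_2)=(\pi,\pi^{-1})$, and the pair $(\pi_1',\pi_2')=(\psi,\psi^{-1})$.

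For the forward direction, I would invoke Proposition~\ref{thm:iso} to extract an isomorphism $\varphi$, which in this case is an automorphism of $(X,L,\mathbf{R})$, such that $\psi=\varphi\pi\varphi^{-1}$ and $\psi^{-1}=\varphi\pi^{-1}\varphi^{-1}$. The first identity is the desired conclusion; the second is redundant.

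For the converse, given an automorphism $\varphi$ of $(X,L,\mathbf{R})$ with $\psi=\varphi\pi\varphi^{-1}$, I would observe that the companion identity
\[
\psi^{-1}=(\varphi\pi\varphi^{-1})^{-1}=\varphi\pi^{-1}\varphi^{-1}
\]
holds automatically. Hence both conditions (1) and (2) of Proposition~\ref{thm:iso} are satisfied, and that proposition produces the required isomorphism between the two isotopes. There is no real obstacle here: the substantive work has already been absorbed into Proposition~\ref{thm:iso} and Corollary~\ref{cor:invol-ex}, and all that remains is the trivial observation that conjugation commutes with taking inverses, which collapses the two conjugacy conditions into one.
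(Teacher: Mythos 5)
Your proposal is correct and matches the paper's approach exactly: the paper's entire proof is ``Directly by Theorem~\ref{thm:iso} we obtain the following,'' i.e.\ a specialization of Proposition~\ref{thm:iso} to the pairs $(\pi,\pi^{-1})$ and $(\psi,\psi^{-1})$, with the second conjugacy condition collapsing into the first because conjugation commutes with inversion. Your additional appeal to Corollary~\ref{cor:invol-ex} to see that $\pi$ and $\psi$ are automorphisms (so that Proposition~\ref{thm:iso} literally applies) is a sensible filling-in of a detail the paper leaves implicit.
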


\section{Algorithm}\label{sec:algorithm}



In this section we summarize the algorithms how to obtain all (involutive or non-involutive) solutions of multipermutation level~2. Since solutions of multipermutation level~1 are exactly those that are isotopic to a trivial solution, we can omit trivial solutions from our algorithms, obtaining only solutions of multipermutation level exactly~2.

Let us start from the involutive case. 
By results from \cite{JPZ20a} all involutive square free $2$-reductive solutions of a~given size~$n$, up to isomorphism, can be obtained using
the following algorithm:
\begin{enumerate}
 \item For all partitionings $n=n_1+n_2+\cdots +n_k$ do (2)--(5).
 \item For all abelian groups $A_1$, \dots ,$A_k$ of size $|A_i|=n_i$ do (3)--(5).
 \item For all constants $c_{i,j}\in A_j$ with $c_{i,i}=0$, $1\leq i,j\leq k$,  do (4).
 \item If, for all $1\leq j\leq k$, we have $A_j=\langle \{c_{i,j}\mid 1\leq i\leq k\}\rangle$
 then construct a solution $(\bigcup A_i,\sigma,\tau)$ according to: $\sigma_x(y)=y+c_{i,j}\quad {\rm and} \quad \tau_y(x)= x-c_{j,i}$ for $x\in A_i$ and $y\in A_j$.
 \item For any permutation~$\alpha$ of $\{1,\ldots,k\}$
 	and any isomorphisms $\psi_i:A_i\to A_{\alpha(i)}$, the constants 
 	$\{\psi_j(c_{\alpha(i),\alpha(j)})\}$ yield an isomorphic solution and therefore shall not be considered in the sequel.
\end{enumerate}

An algorithm for constructing all involutive solutions of multipermutation level~2, up to isomorphism, according to Theorem~\ref{th:repres}, Corollary~\ref{cor:invol-ex} and Corollary~\ref{cor:invol-uni} is the following:
\begin{enumerate}
	\item For each non-trivial square-free 2-reductive involutive solution $(X,L,\mathbf{R})$ do (2)--(3).
	\item Compute the conjugacy classes of $\Aut((X,L,\mathbf{R}))$.
	\item For each conjugacy class choose a~representative $\pi$ and
	return the $(\pi,\pi^{-1})$-isotope of~$(X,L,\mathbf{R})$.
\end{enumerate}

By results from \cite{JP23a} we can construct all, not necessarily involutive, square free $2$-reductive solutions of a~given size~$n$, up to isomorphism, using
the following algorithm:
\begin{enumerate}
 \item For all partitionings $n=n_1+n_2+\cdots +n_k$ do (2)--(5).
 \item For all abelian groups $A_1$, \dots ,$A_k$ of size $|A_i|=n_i$ do (3)--(5).
 \item For all constants $c_{i,j},d_{i,j}\in A_j$ with $c_{i,i}=d_{i,i}=0$, $1\leq i,j\leq k$,  do (4).
 \item If, for all $1\leq j\leq k$, we have $A_j=\langle \{c_{i,j},d_{i,j}\mid 1\leq i\leq k\}\rangle$
 then construct a solution $(\bigcup A_i,\sigma,\tau)$ according to: $\sigma_x(y)=y+c_{i,j}\quad {\rm and} \quad \tau_y(x)= x+d_{j,i}$ for $x\in A_i$ and $y\in A_j$.
 \item For any permutation~$\alpha$ of $\{1,\ldots,k\}$
 	and any isomorphisms $\psi_i:A_i\to A_{\alpha(i)}$, the pair of constants
 $\{(\psi_j(c_{\alpha(i),\alpha(j)}),\psi_i(d_{\alpha(i),\alpha(j)}))\}$ yields an isomorphic solution and therefore shall not be considered in the sequel.
\end{enumerate}

For non-involutive solutions of multipermutation level~2, the algorithm is more complex 
since we have to find a pair of commuting automorphisms $(\pi_1,\pi_2)$, up to conjugacy by an automorphism. It is easy to see that we can choose $\pi_1$, up to conjugacy by an automorphism and then we choose $\pi_2$ up to conjugacy by an automorphism that commutes with $\pi_1$. A key role is played by Lemma~\ref{exm:piinv}.
	\begin{enumerate}
		\item For each non-trivial square-free 2-reductive solution $(X,L,\mathbf{R})$ do (2)--(7).
		\item Construct the set $\{L_x,\mathbf{R}_x\mid x\in X\}$.
		\item Compute the conjugacy classes of $\Aut((X,L,\mathbf{R}))$.
		\item For each conjugacy class of $\Aut((X,L,\mathbf{R}))$ choose a~representative $\pi_1$ and do (5)--(7).
		\item Compute the conjugacy classes of $C_{\pi_1}(\Aut((X,L,\mathbf{R})))$.
		\item For each conjugacy class of $C_{\pi_1}(\Aut((X,L,\mathbf{R})))$ choose a~representative $\pi_2$ and do (7).
		\item If $\pi_1\pi_2$ commutes with the set  $\{L_x,\mathbf{R}_x\mid x\in X\}$ then return the $(\pi_1,\pi_2)$-isotope.
	\end{enumerate}

\begin{example}
Let $X=\{a,b,c,d\}$ be a set with
\begin{align*}
	L_a&=L_b=\mathbf{R}_a=\mathbf{R}_b=(c,d),\\
	L_c&=L_d=\mathbf{R}_c=\mathbf{R}_d=(a,b).
\end{align*}
Then $(X,L,\mathbf{R})$ is an involutive $2$-reductive square-free solution. Its automorphism group is $\Aut(X)=\langle (a,b),(a,c,b,d)\rangle$, a group of order 8 with 5 conjugacy classes. Let us go through all of them:
\begin{itemize}
	\item If $\pi_1=\mathrm{id}$ or $\pi_1=(a,b)(c,d)$ then $C_{\pi_1}(\Aut(X))=\Aut(X)$ and there are five conjugacy classes. Only three of them commute with
	both $(a,b)$ and $(c,d)$ and therefore $\pi_2\in\{\mathrm{id},(a,b),(a,b)(c,d)\}$.
	\item If $\pi_1=(a,b)$ then $C_{\pi_1}(\Aut(X))=\langle (a,b),(c,d)\rangle$. This abelian subgroup contains both $(a,b)$ and $(c,d)$ and hence $\pi_2\in\{\mathrm{id},(a,b),(c,d),(a,b)(c,d)\}$.
	\item If $\pi_1=(a,c,b,d)$ then $C_{\pi_1}(\Aut(X))=\langle (a,c,b,d)\rangle$. This cyclic subgroup has four conjugacy classes and
	$|\pi_1 C_{\pi_1}(\Aut(X))\cap \langle(a,b),(c,d)\rangle|=2$. Hence
	$\pi_2\in\{(a,c,b,d),(a,d,b,c)\}$.
	\item If $\pi_1=(a,c)(b,d)$ then $C_{\pi_1}(\Aut(X))=\langle (a,c)(b,d),(a,d)(b,c)\rangle$. This subgroup is abelian and again
	$|\pi_1 C_{\pi_1}(\Aut(X))\cap \langle(a,b),(c,d)\rangle|=2$. Hence
	$\pi_2\in\{(a,c)(b,d),(a,d)(b,c)\}$.
\end{itemize}
This computation shows that there exist, up to isomorphism, 14  solutions of multipermutation level~2 isotopic to $(X,L,\mathbf{R})$, among which 5 are involutive. By the way, 4 of these new solutions are
indecomposable, among which 2 are involutive.
\end{example}

Finally, using the algorithm described above we can straightforwardly calculate all $2$-per\-mu\-ta\-tional solutions up to size $6$. In Table \ref{Fig:count_solution}, we compare the numbers of isomorphism classes of all $2$-permutational and $2$-reductive
solutions. The numbers of $2$-reductive solutions in Table~\ref{Fig:count_solution} do not agree with \cite[Example 3.14]{JP23a} where the numbers were calculated mistakenly. For instance, in the case of size $3$ we forgot the constellation $(\mathbb{Z}_3,(1),(2))$.

\begin{table}[h!]
\begin{small}
$$\begin{array}{|r|rrrrrr|}\hline
n                       & 1& 2& 3& 4&  5&  6\\\hline
\text{$2$-permutational  solutions}    & 1& 4& 20& 219&  3113&  88604 \\ \hline
\text{$2$-reductive  solutions}    & 1& 4& 20& 207& 3061 &  88304 \\ \hline
\text{square-free $2$-reductive  solutions}    & 1& 1 & 4& 20& 183 & 2513 \\ \hline
\text{involutive solutions}    & 1& 2& 5& 23&  88&  595
\\ \hline
\text{$2$-permutational involutive solutions}    & 1& 2& 5& 19&  70&  359\\\hline
\text{$2$-reductive involutive solutions}    & 1& 2& 5& 17&  65&  323 \\ \hline
\text{square-free $2$-reductive invol. sol.}    & 1& 1& 2& 5&  15&  55 \\ \hline
\end{array}$$
\end{small}
\caption{The number of $2$-permutational solutions of size $n$, up to isomorphism.}
\label{Fig:count_solution}
\end{table}

As we can see, most 2-permutational solutions are 2-reductive. It is easy to check whether a 2-permutational solution is 2-reductive or not; in particular, in our construction we obtain a 2-reductive solution if and only if a permutation $\pi_1$ commutes with $\{\sigma_x,\tau_x\mid x\in X\}$.

\begin{prop}\label{prop:fivecond}
Let $(X,\sigma,\tau)$ be a $2$-permutational solution. 
Then the following conditions are equivalent:
\begin{enumerate}
	\item[(i)] $(X,\sigma,\tau)$ is $2$-reductive,
	\item[(ii)] $(X,\sigma,\tau)$ is distributive,
	\item[(iii)] $\{\sigma_x,\tau_x\mid x\in X\}\subseteq\Aut((X,\sigma,\tau))$,
	\item[(iv)] the set $\{\sigma_x,\tau_x\mid x\in X\}\cup\{U,T\}$ generates an abelian permutation group,
	\item[(v)] the pairs $(\sigma_x,\tau_x^{-1})$, for $x\in X$, and $(U^{-1},T)$ generate an abelian group,
	\item[(vi)] for all $x\in X$, $U\sigma_x=\sigma_xU$ and $T\tau_x=\tau_xT$.
\end{enumerate}
\end{prop}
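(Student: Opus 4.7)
\medskip
\noindent\emph{Proof proposal.}
The plan is to prove the cluster $(i)\Leftrightarrow(ii)\Leftrightarrow(iii)$ and then close the cycle $(i)\Rightarrow(iv)\Rightarrow(v)\Rightarrow(vi)\Rightarrow(i)$, with the last implication being where the substantive argument lies.

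For the first cluster, I would rely on existing results and a short braid computation. The implication $(ii)\Rightarrow(i)$ is precisely Lemma~\ref{lm:perdis}. For $(i)\Rightarrow(ii)$, the braid relation~\eqref{birack:1} together with \eqref{eq:red1} and \eqref{eq:red3} yields $\sigma_x\sigma_y=\sigma_y\sigma_x$, which rewrites as left distributivity using \eqref{eq:red1}; the right distributive law is obtained symmetrically from \eqref{birack:3}. The implication $(iii)\Rightarrow(ii)$ is a direct reading of the automorphism condition~\eqref{isomorphism}, and $(i)\Rightarrow(iii)$ is the structural fact from~\cite{JP23a} already cited in the paragraph immediately following the definition of 2-reductivity.

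For the abelian cluster, $(i)\Rightarrow(iv)$ reduces to checking pairwise commutativity among $\{\sigma_x,\tau_x,U,T\}$: the commutations among $\sigma$'s and among $\tau$'s follow as above, $\sigma_x\tau_y=\tau_y\sigma_x$ drops out of~\eqref{eq:tauinverse4} after applying \eqref{eq:red1} and \eqref{eq:red4}, the relation $UT=TU$ was established just before Lemma~\ref{lem:UT_auto1}, and the commutations of $U$ and $T$ with each $\sigma_x,\tau_x$ follow by combining the automorphism property in Lemma~\ref{lem:UT_auto1} with the identities $\sigma_{U(x)}=\sigma_x$ and $\tau_{T(x)}=\tau_x$ supplied by~\eqref{eq:more2red}. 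Then $(iv)\Rightarrow(v)$ is immediate by coordinatewise projection into $S(X)\times S(X)$, and $(v)\Rightarrow(vi)$ even more so, since the commutator of $(\sigma_x,\tau_x^{-1})$ and $(U^{-1},T)$ being trivial forces $U\sigma_x=\sigma_xU$ and $T\tau_x=\tau_xT$ in each coordinate.

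The main obstacle will be $(vi)\Rightarrow(i)$. From $U\sigma_x=\sigma_xU$ combined with the automorphism identity $U\sigma_x=\sigma_{U(x)}U$ from Lemma~\ref{lem:UT_auto1}, I would first extract $\sigma_{U(x)}=\sigma_x$, i.e.\ $\sigma_{\sigma_x^{-1}(x)}=\sigma_x$. Setting $y=x$ in \eqref{eq:sigmasigmainverse} then turns this into $\sigma_{\tau_z(x)}=\sigma_x$ for arbitrary $z$, which is~\eqref{eq:red3}. Substituting $x\mapsto\tau_z^{-1}(x)$ yields $\sigma_{\tau_z^{-1}(x)}=\sigma_x$ as well, and feeding this back into~\eqref{eq:tauinverse1} gives $\sigma_y=\sigma_{\sigma_z(y)}$, which is~\eqref{eq:red1}. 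The fully symmetric argument, starting from $T\tau_x=\tau_xT$ and invoking \eqref{eq:tautauinverse} and \eqref{eq:tauinverse5}, will produce \eqref{eq:red4} and \eqref{eq:red2}, closing the cycle. The delicate point I will have to monitor carefully is that each identity from Section~\ref{sec:2per} that I invoke in this last step depends only on the 2-permutational hypothesis and on~(vi), so that no form of 2-reductivity is silently assumed before it is proved.
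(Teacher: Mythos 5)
Your proposal is correct and follows essentially the same route as the paper: the cluster $(i)\Leftrightarrow(ii)\Leftrightarrow(iii)$ is handled by Lemma~\ref{lm:perdis} together with the cited structural facts, the chain $(i)\Rightarrow(iv)\Rightarrow(v)\Rightarrow(vi)$ by pairwise-commutativity computations, and the key step $(vi)\Rightarrow(i)$ by combining Lemma~\ref{lem:UT_auto1} with $(vi)$ to get $\sigma_{U(x)}=\sigma_x$ and $\tau_{T(x)}=\tau_x$ and then translating these via the Section~\ref{sec:2per} identities into the four $2$-reductivity laws. The only cosmetic difference is that the paper imports $\sigma_x\tau_y=\tau_y\sigma_x$ from an external reference where you derive it from~\eqref{eq:tauinverse4}; your final caution about not silently assuming $2$-reductivity in the last step is well placed and is indeed respected by the identities you invoke.
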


\begin{proof}
(i)$\Rightarrow$(ii) follows by \cite[Theorems 5.5 and 5.7]{JP23a},
(ii)$\Rightarrow$(iii) by \cite[Proposition 2.13]{JPZ20}, (ii)$\Rightarrow$(i) by Lemma 3.5 and (iii)$\Rightarrow$(ii) is evident. 
\\
(i)+(ii)+(iii)$\Rightarrow$(iv) According to \cite[Theorem 4.6]{JPZ20}, for all $x,y\in X$, $\sigma_x\tau_y=\tau_y\sigma_x$. Hence
\begin{align*}
	\sigma_x\sigma_y&\stackrel{\eqref{birack:1}}=\sigma_{\sigma_x(y)}\sigma_{\tau_y(x)}
	\stackrel{\eqref{eq:per1}}=\sigma_{\sigma_y(y)}\sigma_{\tau_y(x)}\stackrel{\eqref{isomorphism}}=
\sigma_y\sigma_y\sigma_y^{-1}\tau_y\sigma_x\tau_y^{-1}=\sigma_y\sigma_x,\\
U\sigma_x(y)&=\sigma_{\sigma_x(y)}^{-1}\sigma_x(y)\stackrel{\eqref{isomorphism}}=\sigma_x\sigma_y^{-1}(y)=\sigma_xU(y)\quad \text{ and}\\
T\sigma_x(y)&=\tau^{-1}_{\sigma_x(y)}\sigma_x(y)\stackrel{\eqref{isomorphism}}=\sigma_x\tau^{-1}_y(y)=\sigma_xT(y).
\end{align*}
Analogously we can show that $U\tau_x=\tau_x U$ and $T\tau_x=\tau_x T$.
 \\
 (iv)$\Rightarrow$(v)$\Rightarrow$(vi) is evident. \\
 (vi)$\Rightarrow$(i)
 By Lemma 5.2, the bijections $U$ and $T$ are automorphisms of $(X,\sigma,\tau)$. Hence
 \begin{align*}
 \sigma_{\sigma^{-1}_x(y)}&\stackrel{\eqref{birack:1}}=\sigma_{\sigma^{-1}_y(y)}=\sigma_{U(y)}=U\sigma_yU^{-1}=\sigma_y\quad \text{ and}\\
 \sigma_{\tau_x(y)}&\stackrel{\eqref{eq:sigmasigmainverse}}=\sigma_{\sigma^{-1}_y(y)}=\sigma_{U(y)}=U\sigma_yU^{-1}=\sigma_y.
 \end{align*}
Similarly we obtain  $\tau_{\tau_x^{-1}(y)}=\tau_y$ and $\tau_{\sigma_x(y)}=\tau_y$.
\end{proof}

The assumption of multipermutation level~$2$ in Proposition \ref{prop:fivecond} is necessary  as the following example shows.

\begin{example}
Let $X=\{a,b,c,d\}$ and $\sigma_a=\tau_a=(a,c)$, $\sigma_c=\tau_c=(a,c)(b,d)$ and $\sigma_b=\sigma_d=\tau_b=\tau_d=\mathrm{id}$. Then
$(X,\sigma,\tau)$ is a solution of multipermutation level~$3$ and
$U=T=(a,c)$ commutes with both $(a,c)$ and $(a,c)(b,d)$.
Hence the solution satisfies (iv), (v) and (vi) but not (i), (ii) and (iii) of Proposition 7.2.
\end{example}

\end{document}